\theoremstyle{definition}
\newtheorem{theorem}{Theorem}
\newtheorem*{theorem*}{Statement}
\newtheorem{lemma}[theorem]{Lemma}
\newtheorem{proposition}[theorem]{Proposition}
\newtheorem{remark}{Remark}
\newtheorem*{condition*}{Condition}
\DeclareMathOperator{\tr}{tr}
\DeclareMathOperator{\RPC}{RPC} 
\DeclareMathOperator{\fop}{fop} 
\DeclareMathOperator*{\diag}{diag}
\DeclareMathOperator*{\lev}{lev}
\newcommand{\normiii}[1]{{\left\vert\kern-0.25ex\left\vert\kern-0.25ex\left\vert #1 
   \right\vert\kern-0.25ex\right\vert\kern-0.25ex\right\vert}}
\newcommand{\1}{\mathbbm 1}
\newcommand{\T}{\top}
\newcommand{\PP}{{{\mathbb P}}} 
\newcommand{\EE}{{{\mathbb E}}} 
\newcommand{\NN}{{{\mathbb N}}} 
\newcommand{\RR}{{{\mathbb R}}} 
\newcommand{\SSS}{{{\mathbb S}}}
\newcommand{\mcH}{{\mathscr H}}
\newcommand{\cA}{{\mathcal A}} 
\newcommand{\cC}{{\mathcal C}} 
\newcommand{\cD}{{\mathcal D}} 
\newcommand{\cE}{{\mathcal E}} 
\newcommand{\cG}{{\mathcal G}} 
\newcommand{\cI}{{\mathcal I}} 
\newcommand{\cK}{{\mathcal K}}
\newcommand{\cN}{{\mathcal N}} 
\newcommand{\cP}{{\mathcal P}} 
\newcommand{\cR}{{\mathcal R}} 
\newcommand{\cS}{{\mathcal S}}
\newcommand{\cX}{{\mathcal X}} 
\newcommand{\cU}{{\mathcal U}} 
\newcommand{\cZ}{{\mathcal Z}}
\newcommand{\ps}[2]{\left( {#1}\cdot {#2} \right)}
\newcommand{\EGibbs}[1]{\left\langle #1 \right\rangle}
\newcommand{\indep}{\perp \!\!\! \perp}
\newcommand{\bs}{\boldsymbol}
\newcommand{\bH}{\bs H} 
\newcommand{\bP}{\bs P} 
\newcommand{\bQ}{\bs Q}
\newcommand{\bX}{\bs X} 
\newcommand{\bZ}{\bs Z} 
\newcommand{\bPhi}{\bs\Phi} 
\newcommand{\bd}{\bs d} 
\newcommand{\bi}{\bs i} 
\newcommand{\bj}{\bs j} 
\newcommand{\bg}{\bs g} 
\newcommand{\bm}{\bs m} 
\newcommand{\bq}{\bs q} 
\newcommand{\bx}{\bs x} 
\newcommand{\bz}{\bs z} 
\newcommand{\tx}{\tilde x} 
\newcommand{\Hpert}{H^{\text{pert}}} 
\newcommand{\HEF}{H^{\text{EF}}} 
\newcommand{\FEF}{F^{\text{EF}}}
\newcommand{\eqdef}{\triangleq}
\newcommand{\tolong}{\xrightarrow[n\to\infty]{}} 
\newcommand{\toasshort}{\stackrel{\text{as}}{\to}}
\newcommand{\toaslong}{\xrightarrow[n\to\infty]{\text{a.s.}}}
\begin{document}

\title[Spin glass analysis of a Lotka-Volterra SDE]
{Spin glass analysis of the invariant distribution of a Lotka-Volterra SDE 
with a large random interaction matrix} 

\author[Gueddari and Hachem]{Mohammed Younes Gueddari and Walid Hachem} 
\date{\today} 
\address{CNRS, Laboratoire d'informatique Gaspard Monge (LIGM / UMR 8049), \\ 
 Université Gustave Eiffel, ESIEE Paris, France} 
\email{mohammed-younes.gueddari,walid.hachem@univ-eiffel.fr} 

\begin{abstract} 
The generalized Lotka-Volterra stochastic differential equation with a
symmetric food interaction matrix is frequently used to model the dynamics of
the abundances of the species living within an ecosystem when these
interactions are mutualistic or competitive. In the relevant cases of interest,
the Markov process described by this equation has an unique invariant
distribution which has a Hamiltonian structure.  Following an important trend
in theoretical ecology, the interaction matrix is considered in this paper as a
large random matrix. In this situation, the (conditional) invariant
distribution takes the form of a random Gibbs measure that can be studied
rigorously with the help of spin glass techniques issued from the field of
physics of disordered systems.  Considering that the interaction matrix is an
additively deformed GOE matrix, which is a well-known model for this matrix in
theoretical ecology, the free energy of the model is derived in the limit of
the large number $n$ of species, making rigorous some recent results from the
literature. The free energy analysis made in this paper could be adapted to 
other situations where the Gibbs measure is non compactly supported. 
\end{abstract} 

\maketitle

{\bf Keywords :}
    Free energy for a Gibbs measure, Lotka-Volterra stochastic differential equation, 
    large random matrices, theoretical ecology

\section{Introduction} 

The (generalized) Lotka-Volterra (LV) Stochastic Differential Equation (SDE) is
a standard mathematical model for studying the population dynamics of
biological ecosystems. Letting the integer $n > 0$ be the number of living
species coexisting within an ecosystem, and writing $\RR_+ = [0,\infty)$, the
time evolution of the abundances of these species, \emph{i.e.}, the biomasses
or the numbers of individuals after an adequate normalization, is represented
by the random function $x : \RR_+ \to \RR_+^n$ provided as the solution of the
LV SDE 
\begin{equation}
\label{eds} 
d x_t = x_t \left( 1 + \left( \Sigma - I \right) x_t \right) dt + \phi dt 
   + \sqrt{2 T x_t} dB_t , 
\end{equation} 
with the following notational conventions: $1$ is the $n\times 1$ vector of
ones. Given two $\RR^n$--valued vectors $x = [x_i]$ and $y = [y_i]$ and a
function $f : \RR\to\RR$, we denote as $xy$ the $\RR^n$--valued vector $[x_i
y_i]$ (similarly, $x/y = [x_i/y_i]$ in what follows), and we denote as $f(x)$
the $\RR^n$--valued vector $[f(x_i)]$. When appropriate, a scalar $s$ is
understood as the vector $s 1$.  

In Equation~\eqref{eds}, the $n\times n$ matrix $\Sigma$ is called the food
interaction matrix among the species, the scalar $\phi \geq 0$ is the
immigration rate towards the ecosystem, $B_t \in \RR^n$ is a standard
multi-dimensional Brownian Motion (BM) representing a noise, and $T > 0$ is the
noise temperature. We further assume that $x_0$ is a random variable supported
by $\RR_+^n$ and independent of the Brownian motion $B$.  The model $\sqrt{2T
x_t}$ for the diffusion that we consider here is the so-called demographic
noise model.  Ecological justifications of Equation~\eqref{eds} can be found
in, \emph{e.g.},
\cite{bir-bun-cam-18,roy-etal-19,alt-roy-cam-bir-21,akj-etal-24}.
In all this paper, the interaction matrix $\Sigma$ is assumed symmetric.  This
class of interaction matrices is frequently considered to model the mutualistic
and the competitive interactions \cite{all-tan-12,bir-bun-cam-18,akj-etal-24}.

Recently, the SDE model has raised the interest of the physicists and the
researchers working in the field of random disordered systems, focusing on the
invariant distribution (when it exists) of the SDE~\eqref{eds} seen as a Markov
process.  The framework for this analysis can be described as follows.  When
the dimension of the system, \emph{i.e.}, the number of species, is large, a
large random matrix model is frequently advocated to represent the interaction
matrix $\Sigma$.  This follows a long tradition in theoretical ecology where
the fine structure of the interaction matrix cannot be known, and is replaced
by a random model.  The more or less sophisticated random models used to
represent this matrix aim to better understand the impact of the main
ecological phenomena (competition for the resources, mutualism, predation,
parasitism, ...) that govern the concrete dynamic behavior of these ecosystems
in situations where they contain a large number of species.  When $\Sigma$ is
random, the invariant measure of the Markov process~\eqref{eds} becomes a
conditional (Gibbs) measure which is reminiscent of the Gibbs measure that
appears in the celebrated Sherrington-Kirkpatrick (SK) model for the spin
glasses in the mean field regime. In order to study the asymptotics of this
Gibbs measure as $n\to\infty$, the first step is to compute the asymptotics of
the free energy of the system, as was done by Biroli
\emph{et.al.}~\cite{bir-bun-cam-18}, and Altieri
\emph{et.al.}~\cite{alt-roy-cam-bir-21}.  By using the replica method in the
line of the celebrated papers \cite{par-79,par-80},  they derived the limit
free energy, which, similarly to the well-known SK case, involves a Parisi
probability measure that captures the distribution of the overlaps between the
replicas.  These replica--based computations are used to characterize the
structure of the energy landscape in terms of the temperature $T$ and the
parameters of the random matrix model for the interaction matrix. 

The present paper is a first step towards making rigorous the analyses made in
\cite{bir-bun-cam-18,alt-roy-cam-bir-21}, and the subsequent papers.  Before
delving into the statistical physics, the first part of this paper consists in
a complete analysis of the SDE in terms of existence, uniqueness, and
non-explosion of its solution when $\Sigma$ is deterministic. The existence of
an unique invariant distribution for the Markov process determined
by~\eqref{eds}, and the structure of this distribution are also studied.  For a
symmetric $n\times n$ matrix $A$, define $\lambda_+^{\max}(A)$ as 
\[
\lambda_+^{\max}(A) = \max_{u\in\SSS^{n-1}_+} u^\T A u , 
\]
where $\SSS^{n-1}_+ = \{ u \in \RR_+^n, \ \| u \| = 1 \}$ with $\|\cdot\|$ 
being the Euclidean norm. We show that when the condition 
\begin{equation}
\label{l+}
\lambda_+^{\max}(\Sigma) < 1  
\end{equation} 
is satisfied, then, the LV SDE~\eqref{eds} has an unique strong solution which
is well-defined on $\RR_+$. Furthermore, when $T < \phi$, the Markov process
issued from this SDE has an unique invariant measure, and this invariant
measure is given as $G(x) \sim \exp( \mcH(x) / T )$ where the Hamiltonian
$\mcH$ is given in Equation~\eqref{ham} below.  In the context of our
SDE~\eqref{eds}, much of these results are scattered in the literature,
however, mostly without a rigorous proof up to our knowledge. This will be
the object of Section~\ref{sec-eds}. 

In Section~\ref{sec-l+} and in all the remainder of the paper, we assume that
our interaction matrix $\Sigma$ is a symmetric random matrix which is
independent of the BM $B$ and of the initial value $x_0$. Recall that a random
matrix $W_n\in\RR^{n\times n}$ is said to belong to the Gaussian Orthogonal
Ensemble (notation $W_n \sim \text{GOE}_n$) if $W_n$ is equal in law to $(M +
M^\T)/\sqrt{2}$ where $M$ is a real random $n \times n$ matrix with independent
standard Gaussian elements. In Section~\ref{sec-l+} and following, we 
redenote $\Sigma$ as $\Sigma_n$ when useful, and assume that 
\begin{equation}
\label{goedef} 
\Sigma_n = \frac{\kappa}{\sqrt{n}} W_n + \alpha \frac{1 1^\T}{n} , 
\end{equation} 
where $W_n \sim \text{GOE}_n$, and $\kappa > 0$ and $\alpha\in\RR$ are
constants. 
This random matrix model has been frequently considered in the field of
theoretical ecology as an academic model for a mutualistic or weakly 
competitive interaction matrix, where $\alpha$ and $\kappa$ represent 
respectively the normalized mean and standard deviation of the interaction 
among two species \cite{all-tan-12,bun-17,bir-bun-cam-18}.  

For this model, $\lambda_+^{\max}(\Sigma_n)$ becomes of course random. However,
it has been shown by Montanari and Richard \cite{mon-ric-16} in another context
that $\lambda_+^{\max}(\Sigma_n)$ converges almost surely in the large
dimensional regime where $n\to\infty$ to a quantity that can be identified as
the solution of a system of equations in $(\kappa, \alpha)$. In
Section~\ref{sec-l+}, we extend the results of \cite{mon-ric-16} to the
situation where $\alpha$ can be negative.  By doing so, we recover the
realizability bound in the large dimensional regime that is shown in
\cite{bir-bun-cam-18} by building on a former result of Bunin in~\cite{bun-17}. 

When $\Sigma_n$ is random, the invariant measure of the Markov process defined
by~\eqref{eds} becomes a conditional probability measure.  In
Section~\ref{FFR}, we provide an asymptotic analysis of the free energy
associated to this Gibbs measure by using the tools of that are now available
in the mathematical physics literature, leading to Theorem~\ref{F->P} which is
the most important result of this paper.  In most of this literature, the
``spins'' are valued on the set $\{-1, 1\}^n$
\cite{tal-livre11-t1,tal-livre11-t2,pan-livre13}, on the unit-sphere
$\SSS^{n-1}$ \cite{tal-(sph)-06}, or on the rectangle $\cK^n$ where $\cK
\subset\RR$ is a compact set \cite{pan-18}.  One difficulty of our analysis
lies in the fact that our Gibbs measure is supported by the non-compact set
$\RR_+^n$. We hope that our approach can be adapted to other situations where
the support of a Gibbs measure is non-compact, and that our results results
open the door to some further mathematical research on the nature of the Parisi
measure that underlies the limit free energy, leading towards the study of the
Hamiltonian landscape as can be found in the physics literature
\cite{bir-bun-cam-18,alt-roy-cam-bir-21,alt-22}.

In all what follows, $C > 0$ is a generic constant that can change from a line
to another.  This constant can depend on the model dimension $n$ in the next
section but not in the following ones. We denote as $\ps{x}{y}$ the inner
product of the vectors $x,y\in\RR^n$. 

\section{The LV SDE analysis} 
\label{sec-eds} 
In all this section, $n$ is fixed, and $\Sigma$ is a deterministic symmetric
$n\times n$ matrix.  We are concerned here with the well-definiteness of the
SDE~\eqref{eds} as a SDE on $\RR_+=[0,\infty)$ and by the existence of an
unique invariant measure for the continuous time homogeneous Markov process
$(x_t)_{t\geq 0}$ defined by this SDE. 

Given a symmetric $n\times n$ matrix $A$, similarly to the number 
$\lambda_+^{\max}(A)$ defined above, we define $\lambda_+^{\min}(A)$ as 
\[
\lambda_+^{\min}(A) = \min_{u\in\SSS_+^{n-1}} u^\T A u .
\]
With this definition, Condition \eqref{l+} is equivalent to 
$\lambda_+^{\min}(I - \Sigma) > 0$. In all the remainder of the paper, we 
denote as $\lambda_+^{\min}$ this quantity. 

We start with the following proposition, which is proven in 
Appendix~\ref{anx-eds}. 
\begin{proposition}
\label{prop-eds}
Assume that Condition~\eqref{l+} is satisfied. Then, for each initial
probability measure $\mu$ such that $x_0 \sim \mu$, the SDE~\eqref{eds} admits
an unique strong solution on $\RR_+$. 
\end{proposition}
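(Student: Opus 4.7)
The plan is a three-step programme tailored to SDEs with polynomial drift, H\"older-$1/2$ diffusion, and a superlinear dissipative structure: first localize and invoke Yamada--Watanabe to obtain a maximal strong solution up to an explosion time, next verify that the coordinates remain in $\RR_+$ so that $\sqrt{x_t}$ is well-defined, and finally use Condition~\eqref{l+} to rule out explosion through a Lyapunov moment estimate.

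For local well-posedness, observe that the drift $b(x) = x(1+(\Sigma - I)x) + \phi$ is a polynomial, hence locally Lipschitz on $\RR^n$, whereas the diffusion $\sigma(x) = \sqrt{2T}\,\diag(\sqrt{x_1^+},\dots,\sqrt{x_n^+})$ is merely H\"older-$1/2$. This is precisely the setting of the Yamada--Watanabe theorem, whose multidimensional version for diagonal square-root diffusions (see Ikeda--Watanabe or Revuz--Yor) yields pathwise uniqueness coordinate by coordinate. Introducing the hitting times $\tau_R = \inf\{t\geq 0: \|x_t\|\geq R\}$ and replacing $b$, $\sigma$ by globally bounded truncations that agree on $\{\|x\|\le R\}$, standard strong-existence results give a solution for each truncated SDE; pathwise uniqueness allows one to paste them across the $\tau_R$ into a maximal solution $(x_t)_{t<\tau_\infty}$ with $\tau_\infty = \lim_R \tau_R$. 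For non-negativity, view the $i$-th coordinate as a one-dimensional time-inhomogeneous SDE driven by $B^{(i)}$ whose coefficients (conditionally on the other $x_j$'s) satisfy: at $x_i=0$ the drift equals $\phi\ge 0$ and the diffusion vanishes. A standard one-dimensional comparison argument (using the Yamada--Watanabe approximating functions $\psi_m(y) = \int_0^y\!\!\int_0^z\rho_m(u)\,du\,dz$ applied to $x_i^-$) then gives $\EE[(x_i(t\wedge\tau_R))^-]=0$ for all $t$, so the maximal solution stays in $\RR_+^n$ almost surely.

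For the global step, I would apply It\^o's formula to the linear Lyapunov function $V(x) = \ps{\1}{x} = \sum_i x_i$, which is a valid test function on $\RR_+^n$, to obtain
\[
dV(x_t) = \bigl[V(x_t) + x_t^\T(\Sigma - I)x_t + n\phi\bigr]\,dt + dM_t,
\]
where $M_t$ is a local martingale. For $x\in\RR_+^n\setminus\{0\}$, writing $x = \|x\|u$ with $u\in\SSS_+^{n-1}$, the very definition of $\lambda_+^{\max}(\Sigma)$ gives $x^\T\Sigma x \le \lambda_+^{\max}(\Sigma)\|x\|^2$, hence $x^\T(\Sigma-I)x \le -\lambda_+^{\min}\|x\|^2 \le -(\lambda_+^{\min}/n)\,V(x)^2$ by Cauchy--Schwarz. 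Stopping at $t\wedge\tau_R$ and setting $m_R(t) = \EE[V(x_{t\wedge\tau_R})]$, Jensen's inequality yields $m_R'(t) \le m_R(t) - (\lambda_+^{\min}/n)\,m_R(t)^2 + n\phi$, whose right-hand side is negative for large $m_R$; comparison with the corresponding autonomous ODE bounds $m_R(t)$ uniformly in $R$ on any compact time interval. Since $V(x_{\tau_R}) = \|x_{\tau_R}\|_1 \ge \|x_{\tau_R}\|_2 = R$, Markov's inequality gives $\PP(\tau_R\le t) \le m_R(t)/R \to 0$ as $R\to\infty$, so $\tau_\infty = +\infty$ a.s. The main obstacle is the first step, where the H\"older-$1/2$ diffusion and the superlinear polynomial drift are coupled across coordinates while the trajectory must be confined to $\RR_+^n$; Condition~\eqref{l+} then enters only at the Lyapunov stage, where it furnishes the strictly dissipative quadratic term that converts a potentially explosive nonlinearity into a global-in-time moment bound.
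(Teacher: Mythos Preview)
Your proposal is correct and shares the essential ingredients with the paper's proof: Yamada--Watanabe-type pathwise uniqueness to handle the H\"older-$1/2$ diagonal diffusion, the linear Lyapunov function $V(x)=\ps{1}{x}$, and Condition~\eqref{l+} supplying the dissipative quadratic term that rules out explosion. The organization differs: the paper first obtains a weak solution via the martingale-problem machinery of Ikeda--Watanabe, proves non-explosion by a contradiction argument (assuming $\PP[\tau_\infty\le T]\ge\varepsilon$ and deriving $V(x_0)+CT\ge\infty$), then proves pathwise uniqueness with the approximating functions $\varphi_k$ applied coordinatewise, and only at the end invokes the Yamada--Watanabe principle to upgrade to a unique strong solution. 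Your route---truncate the locally Lipschitz drift, solve each truncated SDE, paste along the $\tau_R$, argue non-negativity explicitly, then rule out explosion via a moment bound and Markov's inequality---is an equally standard alternative, and has the virtue of making confinement to $\RR_+^n$ explicit (the paper leaves it implicit in the cited Ikeda--Watanabe existence result).

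One technical remark: your differential inequality $m_R'(t)\le m_R(t)-(\lambda_+^{\min}/n)\,m_R(t)^2+n\phi$ via Jensen is not quite clean as written, because differentiating $\EE[V(x_{t\wedge\tau_R})]$ in $t$ produces $\EE[\cA V(x_t)\1_{t<\tau_R}]$, which is not a function of $m_R(t)$ alone. The paper sidesteps this by observing directly that $\cA V(x)\le C$ uniformly on $\RR_+^n$ (the negative quadratic $-\lambda_+^{\min}\|x\|^2$ absorbs the linear term $\ps{1}{x}$), which gives $m_R(t)\le V(x_0)+Ct$ immediately and suffices for the Markov step. This is a cosmetic fix, not a gap in your argument.
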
 

We shall denote hereinafter as $x^{x_0}_t$ the solution of the SDE~\eqref{eds}
that starts from $x_0$. We have the following proposition: 
\begin{proposition}  
\label{erg} 
Assume that Condition~\eqref{l+} is satisfied. Assume furthermore that 
\begin{equation} 
\label{l-T} 
T < \phi .
\end{equation} 
Then the Markov process $(x_t)$ has an unique invariant distribution
$G(dx) \in \cP(\RR_+^n)$ given as 
\[
G(dx) = \frac{e^{\mcH(x) / T}}{\cZ} dx,  
\]
where $\mcH : (0,\infty)^n \to \RR$ is the Hamiltonian 
\begin{equation}
\label{ham}
\mcH(x) = \frac 12 x^\T \left( \Sigma - I \right) x + (1 \cdot x) 
   + (\phi - T) (1 \cdot \log x)   
\end{equation} 
and $\cZ = \int_{\RR_+^n} \exp(\mcH(x) / T) \, dx < \infty$. Furthermore, for 
each function $\varphi$ integrable with respect to $G(dx)$ and each initial 
value $x_0 \in \RR_+^n$, it holds that 
\[
\frac 1T \int_0^T \varphi(x^{x_0}_t) \ dt 
\xrightarrow[T\to\infty]{\text{a.s.}} \int_{\RR_+^n} \varphi(y) G(dy) 
\] 
\end{proposition}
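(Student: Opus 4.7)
The claim splits naturally into four parts: (a) finiteness of $\mcZ$, so that $G$ is well-defined; (b) invariance of $G$ for the Markov semigroup of~\eqref{eds}; (c) uniqueness of the invariant measure; and (d) the pointwise ergodic theorem.

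For (a), I use the two hypotheses separately to control the two ends of the positive orthant. Condition~\eqref{l+}, equivalent to $\lambda_+^{\min}>0$, gives by homogeneity $\frac{1}{2}x^\T(\Sigma-I)x \leq -\tfrac{\lambda_+^{\min}}{2}\|x\|^2$ for every $x\in\RR_+^n$, which dominates the linear term $\ps{1}{x}\leq\sqrt{n}\|x\|$ and yields Gaussian-type decay of $\exp(\mcH(x)/T)$ at infinity. Near the boundary, condition~\eqref{l-T} gives $(\phi-T)/T>0$, so $\exp(\mcH(x)/T)$ behaves like $\prod_i x_i^{(\phi-T)/T}$ as some $x_i\to 0^+$, which is locally integrable. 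Combining both bounds gives $\mcZ<\infty$.

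For (b), I would verify that the SDE is of Langevin (reversible) type with respect to $G$. With diffusion matrix $a(x)=2T\diag(x)$, the invariant drift attached to a smooth density $\rho=e^{\mcH/T}/\mcZ$ is $\frac{1}{2}a(x)\nabla\log\rho(x)+\frac{1}{2}\nabla\!\cdot\! a(x)$. Using $\partial_i\mcH(x)=((\Sigma-I)x)_i+1+(\phi-T)/x_i$ and $(\nabla\!\cdot\!a)_i=2T$, the $i$-th component of this expression equals exactly $x_i+x_i((\Sigma-I)x)_i+\phi$, which is the $i$-th component of the drift in~\eqref{eds}. This identity implies, via integration by parts against test functions supported in $(0,\infty)^n$, that $\mcL^* e^{\mcH/T}=0$ on the interior, so $G$ is reversible and in particular invariant for $(x_t)$.

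For (c) and (d), the key preliminary step is that the process cannot leave $(0,\infty)^n$: a one-dimensional Feller-type argument applied to each coordinate (for instance, by applying Itô's formula to $-\log x_i$, keeping the dominant terms $-\phi/x_i+T/x_i$ and noting $\phi>T$) shows that $\inf_i x_{i,t}$ remains strictly positive on any finite time interval with probability one, for any initial condition in $\RR_+^n$ (after an arbitrarily small $t_0>0$ to handle starting on the boundary, using immigration). On $(0,\infty)^n$ the coefficients of~\eqref{eds} are smooth and $a(x)$ is positive definite, hence the semigroup is strong Feller and irreducible (Stroock--Varadhan support theorem gives that every nonempty open subset of $(0,\infty)^n$ is reached with positive probability). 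Uniqueness of the invariant measure then follows from the classical Khasminskii criterion, and the a.s.\ ergodic averaging statement in the proposition is the standard pointwise ergodic theorem for a Markov process with a unique invariant measure once positive Harris recurrence has been established (which in turn follows from irreducibility plus existence of $G$ as a finite invariant measure, using $-\mcH$ or a regularization thereof as a Lyapunov function, recalling Proposition~\ref{prop-eds} to rule out explosion).

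The part I expect to require the most care is the boundary analysis in step~(c): the Gibbs density is singular on $\partial\RR_+^n$, and the Feller-type non-attainability argument must be carried out for the full $n$-dimensional coupled system, not coordinate by coordinate in isolation. I would handle this by dominating the coordinate equation for $x_i$ by a one-dimensional CIR-type SDE after freezing the other coordinates and using a comparison/localization argument, then letting the localization time tend to infinity with the help of the non-explosion established in Proposition~\ref{prop-eds}.
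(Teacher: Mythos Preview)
Your proposal is broadly sound, but it takes a different route from the paper, and the differences are worth spelling out. The paper does not split off a boundary non-attainability step or invoke strong Feller/irreducibility/Harris recurrence. Instead it verifies Khasminskii's hitting-time criterion directly by producing an explicit Lyapunov function on all of $\RR_+^n$: with $V(x)=\ps{1}{(x-\log(x+\varepsilon))}$ for a carefully chosen small $\varepsilon>0$, a short computation of $\cA V$ yields $\cA V(x)\leq -\tfrac{\lambda_+^{\min}}{2}\|x\|^2-(\phi-T)\ps{1}{(x+\varepsilon)^{-1}}+C'$, and then an explicit bounded domain $\cD\subset(0,\infty)^n$ is exhibited on whose complement $\cA V\leq -1$. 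Itô's formula gives $\EE\tau_\cD^{x_0}\leq V(x_0)$, so Khasminskii applies at once and delivers both uniqueness and the a.s.\ ergodic theorem in one stroke; the paper then checks $\int\cA\varphi\,dG=0$ by integration by parts, essentially your step~(b). What the paper's approach buys is that the $\varepsilon$-regularization in $V$ sidesteps the boundary issue entirely: $V$ is smooth up to $\partial\RR_+^n$, so no separate Feller/CIR comparison near $x_i=0$ is needed, and the argument works uniformly for $x_0\in\RR_+^n$. Your route is perfectly legitimate and arguably more structural, but the boundary analysis you flag as delicate (non-attainability for the coupled system, plus handling $x_0\in\partial\RR_+^n$) genuinely requires work that the paper's single Lyapunov computation avoids. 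Your suggestion of $-\mcH$ as a Lyapunov function is close in spirit, but note that $-\mcH$ blows up on the boundary, which is precisely why the paper regularizes with the $+\varepsilon$ shift inside the logarithm.
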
 

To prove this result, we rely on a recurrence result that appears in the
book of Khasminskii \cite{kha-livre12}, see also, \emph{e.g.}, \cite{mao-11}: 
\begin{proposition}
\label{khas} 
[application of Th.~4.1 and 4.2 and Cor.~4.4 of \cite{kha-livre12}]  
The Markov process $(x_t)$ has an unique invariant distribution that we denote
as $G$ if there exists a bounded open domain $\cD \subset (0,\infty)^n$, with 
a regular boundary and a closure $\bar\cD \subset (0,\infty)^n$, that 
satisfies the following property.  For each deterministic 
$x_0 \in \RR_+^n\setminus\cD$, let 
\[
\tau^{x_0}_\cD = \inf \{ t \geq 0 \, : \, x^{x_0}_t \in \cD \} 
\]
be the entry time of $x^{x_0}$ in $\cD$ with $\inf\emptyset=\infty$. For each 
compact set $\cK \subset \RR_+^n$, it holds that 
$\sup_{{x_0}\in \cK} \EE \tau^{x_0}_\cD < \infty$. 

Furthermore, if such a set $\cD$ exists, the convergence
\begin{equation}
\label{ergo} 
\frac 1T \int_0^T \varphi(x^{x_0}_t) \ dt 
\xrightarrow[T\to\infty]{\text{a.s.}} \int_{\RR_+^n} \varphi(y) G(dy) 
\end{equation} 
holds true for each real function $\varphi$ integrable with respect to $G$. 
\end{proposition}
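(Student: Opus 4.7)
My strategy follows the standard Khasminskii--Meyn--Tweedie approach for positive-recurrent Markov processes: I would use the hitting-time hypothesis to produce an invariant probability measure via Krylov--Bogolyubov averaging and the Feller property, then obtain uniqueness and the ergodic theorem from a local minorization on $\bar\cD$ coming from the non-degeneracy of the diffusion on $(0,\infty)^n$.

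For existence, fix $x_0 \in \RR_+^n$ and consider the Cesaro averages $\nu_T := T^{-1}\int_0^T P_t(x_0,\cdot)\, dt$, where $(P_t)$ denotes the transition semigroup of $(x_t)$. Iterating the strong Markov property at the successive returns to $\cD$, together with the uniform bound $\sup_{y \in \cK} \EE \tau_\cD^y < \infty$ for every compact $\cK$, shows that the process accumulates a positive fraction of its time inside any compact neighborhood of $\bar\cD$, so that $(\nu_T)_T$ is tight on $\RR_+^n$. The semigroup $(P_t)$ is Feller by the continuity of the coefficients of~\eqref{eds} and the non-explosion supplied by Proposition~\ref{prop-eds}, hence every weak subsequential limit of $(\nu_T)$ is an invariant probability measure; call one such limit $G$.

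For uniqueness and the ergodic convergence, I would exploit that on $(0,\infty)^n$ the generator is a strictly elliptic second-order differential operator with smooth coefficients, so classical parabolic regularity produces a smooth strictly positive transition density $p_t(x,y)$ for $P_t(x,dy)$ on $(0,\infty)^n \times (0,\infty)^n$ for every $t>0$. Since $\bar\cD \subset (0,\infty)^n$ is compact, this yields a uniform minorization
\[
P_{t_0}(x, A) \geq c\, \Leb(A \cap \cD) \quad \text{for every } x \in \bar\cD,
\]
for some $c, t_0 > 0$, making $\bar\cD$ a small set for the chain sampled every $t_0$ units of time. Combined with the finite expected return time, this rules out the coexistence of two invariant probability measures and also permits a Nummelin splitting of the sampled chain into genuine i.i.d.\ regeneration cycles. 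Applying the strong law of large numbers along cycles and then the renewal theorem to translate from cycle averages back to continuous time yields $T^{-1} \int_0^T \varphi(x_t^{x_0})\, dt \to \int \varphi\, dG$ almost surely for every $G$-integrable $\varphi$. The main technical obstacle is identifying $G$ with the renewal-normalized expected occupation measure of a single cycle (a Kac-type representation): this identification is what bridges the Krylov--Bogolyubov construction of $G$ with the cycle-average limit, and it relies simultaneously on the hitting-time bound and on the uniqueness produced by the minorization.
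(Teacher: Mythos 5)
First, a point of comparison: the paper does not prove Proposition~\ref{khas} at all; it is imported as a black box from Khasminskii's book (Theorems~4.1, 4.2 and Corollary~4.4 of \cite{kha-livre12}), whose classical proof goes through the embedded Markov chain on the boundaries of two nested domains $\cD\subset\cD'$, a Doeblin-type condition for that chain on the compact boundary (coming from nondegeneracy of the diffusion on $\cl\cD'\subset(0,\infty)^n$), and the Has'minskii--Kac cycle formula expressing $G$ as the normalized expected occupation measure of one cycle. Your Meyn--Tweedie-style route (Krylov--Bogolyubov plus Feller for existence, an interior-ellipticity minorization on $\bar\cD$, then splitting/regeneration for uniqueness and the law of large numbers) is a genuinely different, modern path to the same statement, and its overall skeleton is viable.

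There is, however, a concrete gap at the existence step: from ``the process accumulates a positive fraction of its time inside a compact neighborhood of $\bar\cD$'' you cannot conclude tightness of the Cesaro averages $\nu_T$. Tightness requires that the expected fraction of time spent outside large balls be uniformly small, and the hypothesis $\sup_{x_0\in\cK}\EE\tau^{x_0}_\cD<\infty$ controls only hitting times of $\cD$, not the time spent far away during excursions. The repair is precisely the cycle argument: introduce a bounded open $\cD'$ with $\bar\cD\subset\cD'$ and $\cl\cD'\subset(0,\infty)^n$, check that the mean cycle length (hit $\cD$, exit $\cD'$, return to $\cD$) is finite --- using the hypothesis on the compact $\partial\cD'$ together with the standard finite expected exit time from the bounded interior domain $\cD'$ --- and then dominated convergence plus a renewal-reward bound shows the occupation fraction outside a ball of radius $R$ vanishes as $R\to\infty$. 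But at that point you are effectively redoing Khasminskii's construction and the Krylov--Bogolyubov step is redundant, since the normalized expected occupation measure of a cycle is itself invariant. Two further steps need attention: the cycles are not i.i.d.\ (the re-entry point on $\partial\cD$ is Markov-dependent), so the strong law along cycles really does require the splitting you invoke, or Khasminskii's Doeblin argument for the boundary chain, and you must also transfer the continuous-time hitting-time bound to the $t_0$-skeleton used in your minorization (a known but nontrivial step, usually handled via the resolvent chain or by applying the continuous-time Harris ergodic theorem directly); finally, the Feller property is not automatic from ``continuity of the coefficients'' because $\sqrt{2Tx}$ is not Lipschitz --- it follows from weak uniqueness and stability in law with respect to the initial condition, which deserves an explicit word.
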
 
\begin{proof}[Proof of Proposition~\ref{erg}] 
The infinitesimal generator $\cA$ of the process $(x_t)$ applied to 
a function $\varphi \in \cC^2_{\text{c}}(\RR^n; \RR)$ is 
\[
\cA \varphi(x) = \ps{\nabla \varphi(x)}
   {\left(x(1 + (\Sigma - I)x) + \phi\right)} 
  + T \tr \nabla^2 \varphi(x) \diag(x) .
\]
Let $\varepsilon > 0$ be a small enough number to be fixed later, and put 
$V(x) = \ps{1}{(x - \log (x+\varepsilon))}$ for $x\in\RR_+^n$. A formal 
application of $\cA$ to $V$ shows that 
\begin{align*}
\cA V(x) &= \ps{\left(1 - \frac{1}{x+\varepsilon}\right)}
 {\left( x + x ((\Sigma - I)x) + \phi\right)} 
  + T \ps{1}{\frac{x}{(x+\varepsilon)^2}} \\
 &= \ps{1}{x} - \ps{x}{(I - \Sigma )x} + \phi n 
   - \ps{1}{\frac{x}{x+\varepsilon}} 
 + \ps{x}{(I - \Sigma )\frac{x}{x+\varepsilon}} 
  - \phi \ps{1}{\frac{1}{x+\varepsilon}} 
  + T \ps{1}{\frac{x}{(x+\varepsilon)^2}} .
\end{align*}
Writing 
$ \ps{1}{x} + \ps{x}{(I - \Sigma)\frac{x}{x+\varepsilon}} \leq C\ps{1}{x}$ 
for some $C > 0$, we obtain that 
\begin{align*}
\cA V(x) &\leq - \ps{x}{(I - \Sigma)x} - (\phi - T) 
   \ps{1}{\frac{1}{x+\varepsilon}} + C\ps{1}{x} + \phi n \\
&\leq - \lambda_+^{\min} \| x \|^2  - (\phi - T)
   \ps{1}{\frac{1}{x+\varepsilon}} + C\ps{1}{x} + \phi n \\
&\leq - 0.5 \lambda_+^{\min} \| x \|^2  - (\phi - T)
   \ps{1}{\frac{1}{x+\varepsilon}} + C' 
\end{align*}
for some $C' > 0$ which does not depend on $\varepsilon$. For 
$x = [ x_i ]_{i\in[n]} \in \RR_+^n$, write $x_{\min} = \min_i x_i$. Fix 
$\varepsilon$ as 
\[
\varepsilon = \frac 12 \left( 1 \wedge \frac{\phi - T}{C'+1} \right), 
\]
and define the open domain $\cD \subset \RR_+^n$ as 
\[
\cD = \left\{ x \in \RR_+^n \ : \ 
  \| x \|^2 < \frac{2(1+C')}{\lambda_+^{\min}} 
  \ \text{and} \ x_{\min} > \frac{\phi - T}{C'+1} - \varepsilon \right\} 
\]
(up to enlarging $C'$ if necessary, we can consider that $\cD\neq\emptyset$). 
We observe that $V > 0$ on $\RR_+^n$ and that $\cD$ satisfies the statement of 
Proposition~\ref{khas}.  Furthermore, one can check that for
our choice of $\cD$, it holds that $\cA V(x) \leq -1$ for each 
$x \in \RR_+^n \setminus \cD$. 

Let $x_0$ be a constant vector in $\RR_+^n\setminus \cD$. 
For $a > 0$, define the stopping time $\eta^{x_0}_a = \inf \{ t \geq
0 \ : \ \| x^{x_0}_t \| \geq a \}$.  By Itô's formula, we have by the 
derivation we just made that 
\[
\EE V(x^{x_0}_{t\wedge\tau^{x_0}_\cD\wedge\eta^{x_0}_a}) = 
V({x_0}) 
+ \EE\int_0^{t\wedge\tau^{x_0}_\cD\wedge\eta^{x_0}_a} \cA V(x^{x_0}_u) du 
\leq V({x_0}) - \EE[t\wedge\tau^{x_0}_\cD\wedge\eta^{x_0}_a] . 
\]
Since $V > 0$ on $\RR_+^n$, we obtain that 
$\EE[t\wedge\tau^{x_0}_\cD\wedge\eta^{x_0}_a] \leq V({x_0})$. 
Taking $a\to\infty$ then $t \to\infty$, we get that 
$\EE \tau^{x_0}_\cD < V(x_0)$, which implies that 
$\sup_{{x_0}\in \cK} \EE \tau^{x_0}_\cD < \infty$ for every compact set 
$\cK \subset \RR_+^n\setminus\cD$, and the condition provided on the statement
of Proposition~\ref{khas} is satisfied. Thus, the Markov process defined 
by~\eqref{eds} has an unique invariant distribution $G$, and the 
convergence~\eqref{ergo} holds true. 

It remains to show that the invariant distribution $G$ takes the form provided 
in the statement of Proposition~\ref{erg}. To that end, it is enough to show 
that for $G$ taking this form, it holds that 
\begin{equation}
\label{muinv} 
\forall \varphi \in \cC^2_{\text{c}}(\RR_+^n; \RR), \quad 
\int_{\RR_+^n} \cA \varphi(y) \ G(dy) = 0 . 
\end{equation} 
By an Integration by Parts, we have 
\begin{align*} 
\int_{\RR_+^n} \cA \varphi(y) e^{\mcH(y)/T} \ dy &= 
\int_{\RR_+^n} \left\{ \ps{\nabla\varphi(y)}
 {(y(1 + \Sigma y - y) +\phi)} +
 T \tr \nabla^2\varphi(y) \diag(y) \right\}  e^{\mcH(y)/T} \ dy \\ 
&= \int \varphi(y) \left\{ 
{\nabla}\cdot{\left( (y (-1 - \Sigma y + y) - \phi) 
    e^{\mcH(y)/T}\right)} 
+ T \sum_i \frac{\partial^2}{\partial y_i^2} \left( y_i e^{\mcH(y)/T} \right)  
\right\} \ dy . 
\end{align*} 
We therefore need to show that the term between $\{\cdot\}$ in the integrand 
above is zero for each $y\in(0,\infty)^n$. It is enough to show that 
\[
\forall i \in [n], \quad 
\left( y_i (- 1 - [\Sigma y]_i + y_i) - \phi \right) e^{\mcH(y)/T} 
 = 
 - T \frac{\partial}{\partial y_i} \left( y_i e^{\mcH(y)/T} \right). 
\]
This is directly obtained by developing the right hand side of this expression 
with the help of~\eqref{ham}. 
\end{proof} 

In all the remainder of this paper, we shall assume that $T < \phi$. 

\section{Realizability bound for deformed GOE interaction matrix} 
\label{sec-l+} 

From now on, we assume that our interaction matrix $\Sigma_n$ is a large random
$n\times n$ matrix described by Equation~\eqref{goedef}.  The purpose of this
section is to characterize the behavior of $\lambda_+^{\max}(\Sigma_n)$ as
$n\to\infty$ for this model. 

This problem was essentially solved by Montanari and Richard in the different
context of the so-called non-negative Principal Component Analysis
\cite{mon-ric-16}. Here, we extend their analysis to the case where $\alpha$
can be negative, and we correct a small error in their proof. 

In all the remainder of this paper, we shall often drop the index $n$ 
from objects like $\Sigma_n$ or $W_n$ for readability. 

\begin{proposition}
\label{bunin-bd} 
Define the real functions $d, f$, and $g$ on $\RR$ as 
\[
d(x) = \EE\left(Z + x\right)_+^2, \quad 
f(x) = \frac{\EE\left(Z + x\right)_+}{\sqrt{d(x)}},\quad \text{and} \quad 
 g(x) = \frac{\EE Z\left(Z + x\right)_+}{\sqrt{d(x)}},
\]
where $Z\sim \cN(0, 1)$. Then, it holds that 
\[
\lambda_+^{\max}(\Sigma_n) \toaslong \bs\lambda_+(\alpha,\kappa),
\]
where
\[
\bs\lambda_+(\alpha,\kappa) = \alpha f(c)^2 + 2\kappa g(c), 
\]
and $c\in\RR$ is the unique solution of the equation 
\[
c = \frac{\alpha}{\kappa} f(c). 
\]
\end{proposition}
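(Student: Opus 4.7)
The plan is to adapt the argument of Montanari--Richard~\cite{mon-ric-16}, where an analogous limit is derived for non-negative PCA with $\alpha \geq 0$, and to extend it to arbitrary $\alpha \in \RR$. As a preliminary reduction, note that $W_n \mapsto \lambda_+^{\max}(\Sigma_n)$ is a Lipschitz function (with constant of order $1/\sqrt n$) of the independent Gaussian entries of $W_n$. Gaussian concentration (Borell--TIS) combined with a Borel--Cantelli argument then reduces the almost-sure statement to identifying $\lim_n \EE \lambda_+^{\max}(\Sigma_n) = \bs\lambda_+(\alpha, \kappa)$.

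For the lower bound, I would construct a near-optimal $u^\star \in \SSS_+^{n-1}$ guided by KKT. The stationarity at a maximizer reads $(\kappa/\sqrt n)(W_n u)_i + (\alpha/n)\ps{1}{u} = \lambda u_i$ on the support of $u$ (and $\leq 0$ off it), so after rescaling $\tilde u_i = \sqrt n\, u_i$ one expects $\tilde u_i = (\kappa Z_i + \alpha m)_+/\lambda$, with $Z_i$ playing the role of $(W_n u)_i$ (asymptotically standard Gaussian) and $m = \ps{1}{u}/\sqrt n$. This is the fixed point of an Approximate Message Passing iteration with soft-thresholding denoiser $x\mapsto x_+$; its rigorous state evolution for symmetric Gaussian matrices drives the scalar summary $m$ to $f(c)$ with $c = (\alpha/\kappa)f(c)$, and the normalized AMP iterate $u^\star$ verifies $u^{\star\T}\Sigma_n u^\star \to \alpha f(c)^2 + 2\kappa g(c)$. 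The factor $2\kappa g(c)$, in place of the naive $\kappa g(c)$ that one would obtain by treating $u^\star$ as independent of $W_n$, reflects the Onsager correction intrinsic to symmetric AMP.

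For the matching upper bound, the natural tool is a Gaussian comparison theorem (Gordon's inequality and its convex min-max descendants). Since $u^\T W_n u$ is quadratic rather than bilinear, a preliminary linearization is needed: I would dualize the rank-one piece via its Fenchel conjugate, $(\alpha/n)\ps{1}{u}^2 = \max_t[2t\sqrt{\alpha}\,\ps{1}{u}/\sqrt n - t^2]$ for $\alpha \geq 0$ (with $\min_t$ and opposite sign for $\alpha<0$), and impose $\ps{1}{u}/\sqrt n = s$ as a parametric constraint. For each fixed $s$, the residual problem $\max u^\T W_n u$ over $u \in \SSS_+^{n-1}$ with $\ps{1}{u} = s\sqrt n$ can be bounded via the Lagrangian/cavity Gaussian interpolation of~\cite{mon-ric-16}. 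Collapsing the resulting saddle-point in $(s,t,\lambda)$ produces a scalar variational problem whose stationarity condition is precisely $c = (\alpha/\kappa) f(c)$ and whose optimal value is $\alpha f(c)^2 + 2\kappa g(c)$.

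The main obstacle is that Gordon-type comparisons apply most naturally to bilinear Gaussian forms, and the positive cone $\SSS_+^{n-1}$ lacks the orthogonal symmetry that would identify $\max u^\T W_n u$ with the top eigenvalue of $W_n$. The fix, which is the technical heart of~\cite{mon-ric-16} and of the present extension, is the Lagrangian/interpolation argument sketched above. Two points specific to $\alpha < 0$ require extra care: the convex/concave structure of the saddle-point is inverted, and the existence and uniqueness of the fixed point $c$, now lying in $(-\infty,0]$, must be established directly by a monotonicity analysis of $c \mapsto \kappa c/f(c)$ on the negative half-line, using explicit formulas for $d, f, g$ in terms of the standard Gaussian density and cdf; the ``small error'' of~\cite{mon-ric-16} alluded to in the paper presumably lives at this boundary regime.
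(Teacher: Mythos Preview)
Your overall architecture --- AMP with state evolution for the lower bound, a Gaussian comparison after slicing by $s = \ps{1}{u}/\sqrt n$ for the upper bound, and a monotonicity analysis of $x \mapsto f(x)/x$ for the fixed point --- matches the paper. The lower bound via AMP is exactly what the paper does, deferring to \cite{mon-ric-16} without modification.

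For the upper bound, the paper's route is simpler than yours. Once $s$ is frozen, the rank-one contribution is the constant $\alpha s^2$, so your Fenchel dualization in $t$ is superfluous. More importantly, there is no need to bilinearize the quadratic $u \mapsto n^{-1/2}\, u^\T W u$ or to invoke Gordon-type min--max machinery: Sudakov--Fernique applies directly to this Gaussian process on $\SSS_+^{n-1}(s)$, dominated by the linear process $u \mapsto 2\ps{u}{\bZ}$ with $\bZ \sim \cN(0, n^{-1} I_n)$, since $n^{-1}\EE(u^\T W u - v^\T W v)^2 \leq \EE(2\ps{u-v}{\bZ})^2$ on the unit sphere. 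This gives $\EE M(s) \leq \alpha s^2 - 2\tilde c s + 2\sqrt{d(\tilde c)}$ for any $\tilde c \in \RR$; choosing $\tilde c = \tilde c(s)$ through $f(\tilde c) = s$ and using the identity $\sqrt{d(x)} = xf(x) + g(x)$ yields $r_\alpha(\tilde c) \leq \max r_\alpha = r_\alpha(c)$.

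You have also misplaced the error in \cite{mon-ric-16}. It is not about $\alpha < 0$; the extension to negative $\alpha$ is routine once monotonicity of $q(x)=f(x)/x$ on $(-\infty,0)$ is checked. The flaw is for $\alpha > 0$: the argument in \cite{mon-ric-16} implicitly requires $s \mapsto \alpha s^2 - 2\tilde c s$ to be concave, which fails exactly there. Since your upper bound defers to their technique and your proposed repairs target the wrong sign of $\alpha$, this gap would persist in your argument. The paper's remedy is precisely the Sudakov--Fernique bound above, with $\tilde c$ chosen as a function of $s$ rather than optimized a posteriori.
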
 
\begin{remark}
We observe that if $\bs\lambda_+(\alpha,\kappa) < 1$, then on the probability
one set where $\lambda_+^{\max}(\Sigma_n) \to \bs\lambda_+(\alpha,\kappa)$, the
function $\exp(\mcH(x) / T)$ is integrable on $\RR_+^n$ for all large $n$, and
the distribution $G$ is well-defined. Alternatively, if
$\bs\lambda_+(\alpha,\kappa) > 1$, then on the probability one set where
$\lambda_+^{\max}(\Sigma_n) \to \bs\lambda_+(\alpha,\kappa)$, for all large
enough $n$, the function $\exp(\mcH(x) / T)$ is not integrable on $\RR_+^n$.
The ``realizability frontier'' $\{ (\alpha,\kappa) :
\bs\lambda_+(\alpha,\kappa) = 1 \}$ which is plotted on Figure~\ref{bun-crv}
coincides with the curve provided in~\cite{bun-17}. 
\end{remark} 
\begin{figure}[h] 
\includegraphics[width=\linewidth]{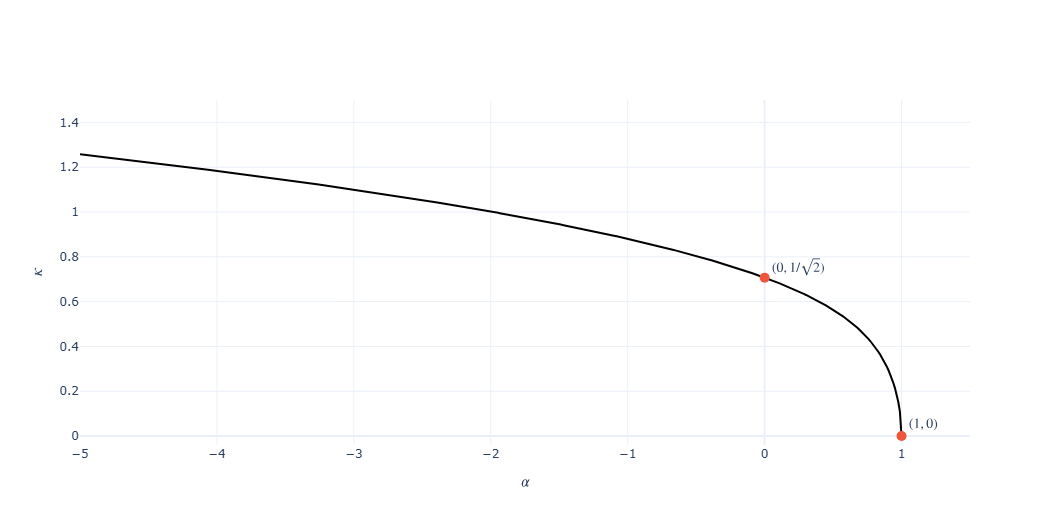}
\caption{The curve $\bs\lambda_+(\alpha,\kappa) = 1$}
\label{bun-crv} 
\end{figure}

Two particular cases deserve some attention:
\begin{lemma}
\label{pt-extrm}
$\bs\lambda_+(0,\kappa) = \kappa\sqrt{2}$ and 
$\lim_{\kappa\to 0} \bs\lambda_+(\alpha,\kappa) = 0 \vee \alpha$. 
\end{lemma}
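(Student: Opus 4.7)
The plan is to treat the two claims separately, and in the second claim, split further into the subcases $\alpha > 0$, $\alpha < 0$, and $\alpha = 0$, each handled by describing the behavior of the defining equation $c = (\alpha/\kappa)f(c)$ and plugging the resulting $c$ into the formula for $\bs\lambda_+$.

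For the first claim, when $\alpha = 0$, the equation $c = (\alpha/\kappa)f(c)$ forces $c = 0$. It then suffices to evaluate $g(0)$. Since $ZZ_+ = Z_+^2$ pointwise, we have $\EE Z Z_+ = \EE Z_+^2 = 1/2$ (by symmetry of $Z\sim\cN(0,1)$), so
\[
g(0) \;=\; \frac{\EE Z_+^2}{\sqrt{\EE Z_+^2}} \;=\; \sqrt{\EE Z_+^2} \;=\; \frac{1}{\sqrt 2}.
\]
Substituting into $\bs\lambda_+(0,\kappa) = 0\cdot f(0)^2 + 2\kappa g(0)$ yields $\kappa\sqrt 2$.

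For the second claim with $\alpha > 0$, I would first establish the elementary asymptotics
\[
f(x) \;=\; \frac{x + o(1)}{\sqrt{1 + x^2 + o(1)}} \xrightarrow[x\to+\infty]{} 1,
\qquad g(x) \;=\; \frac{1 + o(1)}{\sqrt{1 + x^2 + o(1)}} \;=\; O(1/x),
\]
which follow from the bounds $\EE(Z+x)_+ = x + \EE(Z+x)_-$, $\EE(Z+x)_+^2 = 1 + x^2 - \EE(Z+x)_-^2$, $\EE Z(Z+x)_+ = 1 - x\EE \1_{Z+x<0} + O(\text{Gaussian tail})$, and the fact that all the ``negative-part'' contributions decay super-polynomially. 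Since $f \in [0,1]$ and the equation $c = (\alpha/\kappa)f(c)$ must hold with $\alpha/\kappa \to +\infty$, we must have $c \to +\infty$, and moreover $c \sim \alpha/\kappa$ (since $f(c) \to 1$). Plugging into $\bs\lambda_+(\alpha,\kappa) = \alpha f(c)^2 + 2\kappa g(c)$, the first term tends to $\alpha$ and the second is $O(\kappa/c) = O(\kappa^2/\alpha) \to 0$, giving the limit $\alpha$.

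For the subcase $\alpha < 0$, a sign analysis of the fixed-point equation (any $c \geq 0$ gives a positive $f(c)$ and hence a negative right-hand side) forces $c < 0$, and since $c/f(c) = \alpha/\kappa \to -\infty$, we need $c \to -\infty$. Here the main technical work lies in showing both $f(c)^2 \to 0$ and $\kappa g(c) \to 0$. Using Mills' ratio $1 - \Phi(u) \sim \phi(u)/u$ together with the explicit identities $\EE(Z-u)_+ = \phi(u) - u(1-\Phi(u))$, $\EE(Z-u)_+^2 = (1+u^2)(1-\Phi(u)) - u\phi(u)$, and $\EE Z(Z-u)_+ = 1 - \Phi(u)$, a short computation gives
\[
f(-u) \;\sim\; \frac{\phi(u)^{1/2}}{\sqrt 2\, u^{1/2}},
\qquad g(-u) \;\sim\; \frac{\sqrt{u\,\phi(u)}}{\sqrt 2}, \qquad u\to+\infty,
\]
so both quantities vanish; in particular $g(c)$ stays bounded (in fact tends to $0$) as $c \to -\infty$, and so $2\kappa g(c) \to 0$. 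Combined with $\alpha f(c)^2 \to 0$, we conclude $\bs\lambda_+(\alpha,\kappa) \to 0$. The case $\alpha = 0$ is immediate from part one. The three values $\alpha$, $0$, $0$ (for $\alpha > 0$, $\alpha = 0$, $\alpha < 0$) combine into $0 \vee \alpha$. The main obstacle is the tail analysis in the $\alpha < 0$ subcase; once the Mills' ratio expansions are in hand, the rest is routine algebra.
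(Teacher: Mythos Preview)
Your proposal is correct and follows essentially the same approach as the paper: determine the behavior of $c=c(\kappa)$ from the fixed-point equation as $\kappa\to 0$ (namely $c\to+\infty$ for $\alpha>0$ and $c\to-\infty$ for $\alpha<0$), then use the limits of $f$ and $g$ at $\pm\infty$ to evaluate $\alpha f(c)^2 + 2\kappa g(c)$. The only difference is that the paper simply invokes the properties of $f$, $g$, and $q(x)=f(x)/x$ already established in the proof of Lemma~\ref{l-wd} (in particular that $g$ is bounded with $g(\pm\infty)=0$ and $f(-\infty)=0$, $f(+\infty)=1$), whereas you re-derive sharper asymptotics via Mills' ratio; your extra precision is not needed, but does no harm.
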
 
This lemma explains the presence of the points $(\alpha,\kappa) =
(0,1/\sqrt{2})$ and $(\alpha,\kappa) = (1,0)$ on the curve of
Figure~\ref{bun-crv}. 

\subsection*{Modifications of the approach of \cite{mon-ric-16} to prove 
Proposition~\ref{bunin-bd}}  

Noticing that 
\[
\lambda_+^{\max}(\Sigma_n) = 
\kappa \lambda_+^{\max}\left(\frac{1}{\sqrt{n}} W_n  
   + \frac \alpha\kappa \frac{1 1^\T}{n} \right)
\quad \text{and} \quad 
\bs\lambda_+(\alpha,\kappa) = \kappa \bs\lambda_+(\alpha/\kappa, 1), 
\]
it is enough to establish Proposition~\ref{bunin-bd} for $\kappa = 1$ and 
$\alpha$ arbitrary. 

We first focus on the well-definiteness of $\bs\lambda_+(\alpha,1)$ as provided
in the statement of the proposition, and provide a maximality property related
with this quantity.  To this end, we adapt the results of \cite[Lemmata 18, 20,
22]{mon-ric-16} to the cases where $\alpha$ can be negative. 
\begin{lemma}
\label{l-wd}
For each $\alpha\in\RR$, the function $r_\alpha : \RR\to\RR$ defined as 
\[
r_\alpha(x) = \alpha f(x)^2 + 2 g(x) 
\]
has an unique maximizer $c$, which is defined as the unique solution of the 
fixed point equation 
\[
x = \alpha f(x).
\]  
\end{lemma}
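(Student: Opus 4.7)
The plan is to show that the critical points of $r_\alpha$ coincide exactly with the fixed points of $x\mapsto\alpha f(x)$, and that every such critical point is a strict local maximum of $r_\alpha$. Uniqueness then follows automatically: if there were two critical points, they would both be strict local maxima, forcing a local minimum (itself a critical point) between them, a contradiction.

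Set $A(x) = \EE(Z+x)_+ = \phi(x) + x\Phi(x)$, $B(x) = \EE Z(Z+x)_+ = \Phi(x)$, and $D(x) = d(x) = (1+x^2)\Phi(x) + x\phi(x)$, so that $f = A/\sqrt{D}$ and $g = B/\sqrt{D}$. A direct computation yields $A' = B$, $B' = \phi$, $D' = 2A$, and the two algebraic identities $A = B' + xB$ and $D = B + xA$. Expanding $D = B + xA$ inside $AB - B'D$ and using $A - B' = xB$ gives
\[
AB - B'D \;=\; x(BD - A^2).
\]
Differentiating $r_\alpha = \alpha A^2/D + 2B/\sqrt{D}$ and invoking this identity produces the clean factorization
\[
r_\alpha'(x) \;=\; p(x)\,\bigl[\alpha f(x) - x\bigr], \qquad p(x) \;=\; \frac{2(BD - A^2)}{D^{3/2}}.
\]
Cauchy--Schwarz applied to $(Z+x)_+$ on the event $\{Z > -x\}$ shows $A^2 < BD$ strictly, hence $p(x) > 0$ for every $x\in\RR$. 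Thus critical points of $r_\alpha$ are precisely the solutions of $x = \alpha f(x)$. Since $f(x) \to 0$ as $x\to-\infty$ and $f(x)\to 1$ as $x\to+\infty$, the bracket $\alpha f(x) - x$ is positive near $-\infty$ and negative near $+\infty$, so $r_\alpha$ attains its maximum at an interior critical point.

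To upgrade each critical point to a strict local maximum, compute $r_\alpha''(c) = p(c)\,(\alpha f'(c) - 1)$ for $c$ a critical point. When $\alpha \leq 0$, since $f'(x) = (BD - A^2)/D^{3/2} > 0$, one has $\alpha f'(c) \leq 0 < 1$ and we are done. When $\alpha > 0$, the fixed-point relation forces $c > 0$ and $\alpha = c/f(c)$, so it suffices to show $c f'(c) < f(c)$. Using $cB - A = -B'$ (immediate from $A = B' + cB$),
\[
c f'(c) - f(c) \;=\; \frac{c(BD - A^2) - AD}{D^{3/2}} \;=\; \frac{D(cB - A) - cA^2}{D^{3/2}} \;=\; \frac{-B' D - cA^2}{D^{3/2}} \;<\; 0,
\]
because $B' = \phi > 0$, $D > 0$, and $c \geq 0$. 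Hence every critical point is a strict local maximum, and the topological argument of the first paragraph yields uniqueness of both the maximizer of $r_\alpha$ and the solution of $x = \alpha f(x)$. The main obstacle is spotting the two identities $AB - B'D = x(BD - A^2)$ and $cB - A = -B'$ that cleanly factor $r_\alpha'$ and $cf'(c) - f(c)$; once these are in hand, the argument is entirely mechanical and handles both signs of $\alpha$ uniformly.
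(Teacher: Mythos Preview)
Your proof is correct. The core factorization $r_\alpha'(x) = p(x)\bigl[\alpha f(x) - x\bigr]$ is identical to the paper's: since $p(x) = 2(BD-A^2)/D^{3/2} = 2f'(x)$, and the paper derives $g'(x) = -xf'(x)$ (which is exactly your identity $AB - B'D = x(BD-A^2)$ in disguise), both arrive at $r_\alpha'(x) = 2f'(x)(\alpha f(x) - x)$.

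The difference lies in the uniqueness step. The paper introduces $q(x) = f(x)/x$, shows it is strictly decreasing on each of $(-\infty,0)$ and $(0,\infty)$ (citing \cite{mon-ric-16} for the positive half-line), and reads off directly that $r_\alpha' > 0$ on $(-\infty,c)$ and $r_\alpha' < 0$ on $(c,\infty)$, i.e.\ $r_\alpha$ is unimodal. You instead compute $r_\alpha''(c)$ at an arbitrary critical point, show it is strictly negative in both sign regimes of $\alpha$, and conclude by the ``two strict local maxima force an intermediate critical minimum'' argument. Your route is fully self-contained (no appeal to \cite{mon-ric-16}) and your Cauchy--Schwarz proof of $f'>0$ is cleaner than simply asserting it; the paper's route yields the slightly stronger conclusion of global unimodality with less computation. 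Note also that your calculation $cf'(c) - f(c) = (-B'D - cA^2)/D^{3/2} < 0$ in fact holds for every $c>0$, not just critical points, so you have actually reproved the monotonicity of $q$ on $(0,\infty)$ that the paper cites.
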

\begin{proof}
We first need some properties of the functions $f$ and $g$, which can be 
obtained by a small adaptation of \cite[Lemma 18]{mon-ric-16}. These read: 
The function $f$ is strictly positive and differentiable.  It satisfies
$f'(x)>0$ for all $x\in \RR$, $\lim_{x\to-\infty} f(x) = 0$,  $f(0) =
1/\sqrt{\pi}$, and $\lim_{x\to +\infty} f(x) = 1$.  The function $g$ is a
strictly positive differentiable function that satisfies $g'(x) > 0$ for all $x
< 0$, $g'(0) = 0$, and $g'(x)<0$ for all $x > 0$. Also, $\lim_{|x| \to \infty}
g(x) = 0$.

We first observe that if $\alpha = 0$, then trivially, $c = 0$ and it is the
unique maximizer of $r_0$ by the aforementioned properties of $g$. Assume
$\alpha \neq 0$. 

Let us define the real function function $q$ on $\RR\setminus\{0\}$ as
$q(x) = f(x) / x$. On this domain of definition, it holds that 
\[
q'(x) = (xf'(x) - f(x))/x^2 . 
\]
Therefore, it is clear that $q$ is strictly decreasing on $(-\infty, 0)$. 
We also know from the proof of \cite[Lemma 20]{mon-ric-16} that $q$ is also
strictly decreasing on $(0,\infty)$. Finally, it is obvious from what 
precedes that $\lim_{|x|\to \infty} q(x) = 0$, 
$\lim_{x\to 0^-} q(x) = -\infty$, and $\lim_{x\to 0^+} q(x) = +\infty$. 
It results that the equation $x = \alpha f(x)$ has an unique solution 
$c \in\RR$ for each $\alpha\in\RR\setminus\{0\}$. 

It remains to show that $c$ is the unique maximum of $r_\alpha$.
A small calculation shows that $(\sqrt{d(x)})' = f(x)$ and that 
$\sqrt{d(x)} = x f(x) + g(x)$. Equating the derivatives at both sides of this
last equation shows that $g'(x) = - x f'(x)$. We can therefore write 
\[
r_\alpha'(x) = 2\alpha f(x)f'(x) + 2 g'(x)
 = 2x f'(x) \left(\alpha \frac{f(x)}{x} - 1\right) = 
 2x f'(x) \left(\alpha q(x) - 1\right)  . 
\]
Let us focus here on the case where $\alpha < 0$. The case $\alpha > 0$ is
similar. From the previous results on the graph of $q$, we obtain that $c < 0$ 
and that
\[  
 \left\{\begin{array}{ll} 
x(\alpha q(x) - 1) > 0  &\text{if } x < c \\
x(\alpha q(x) - 1) = 0  &\text{if } x = c \\
x(\alpha q(x) - 1) < 0  &\text{if } x > c. \end{array}\right.
\]
Recalling that $f' > 0$, we obtain that $c$ is the unique maximizer of 
$r_\alpha$. 
\end{proof} 

It remains to show that $\lambda_+^{\max}(\Sigma_n) \toasshort
\bs\lambda_+(\alpha, 1) = r_\alpha(c)$ to finish the proof of
Proposition~\ref{bunin-bd}.  

The first step consists in showing that 
$\limsup_n \lambda_+^{\max}(\Sigma_n) \leq r_\alpha(c)$ w.p.1. 
Introducing the set $\SSS_+^{n-1}(s)$ defined for $s\in [0,1]$ as 
\[
\SSS_+^{n-1}(s) = \left\{ u \in \SSS_+^{n-1}, \ps{u}{1/\sqrt{n}} = s \right\}, 
\]
we can characterize $\lambda_+^{\max}(\Sigma)$ by the identity 
\begin{align*}
\lambda_+^{\max}(\Sigma) &= 
  \max_{s\in[0,1]} \left( \alpha s^2 + \max_{u\in\SSS_+^{n-1}(s)} 
  \ps{u}{W u} \right) \\
 &\eqdef \max_{s\in[0,1]} M(s). 
\end{align*} 
By a Gaussian concentration argument \cite[Appendix B]{mon-ric-16}, it is 
enough to show that 
\begin{equation}
\label{max-M}
\max_{s\in[0,1]} \limsup_n \EE M(s) \leq r_\alpha(c) 
\end{equation} 
to obtain that $\limsup_n \lambda_+^{\max}(\Sigma) \leq r_\alpha(c)$ w.p.1. 
This bound can be obtained by a Sudakov-Fernique argument for Gaussian
processes \cite{ver-livre18}. Consider the Gaussian process 
$u \mapsto \xi_u = 2 \ps{u}{\bZ}$ on $\SSS_+^{n-1}(s)$ with 
$\bZ \sim \cN(0, n^{-1} I_n)$. We can easily show that 
$n^{-1} \EE \left( \ps{u}{Wu} - \ps{v}{W v} \right)^2 \leq 
\EE (\xi_u - \xi_v)^2$.  By the Sudakov-Fernique inequality, we then have 
\begin{align*}  
\EE M(s) &\leq \alpha s^2 + 2\EE \max_{u\in\SSS_+^{n-1}(s)} \ps{u}{\bZ} \\ 
        &\leq \alpha s^2 -2\tilde c s 
 + 2\EE \max_{u\in\SSS_+^{n-1}} \ps{u}{(\bZ + \tilde c 1/\sqrt{n})} \\
        &= \alpha s^2 -2\tilde c s 
              + 2\EE \| (\bZ + \tilde c 1/\sqrt{n})_+ \| \\
&\leq \alpha s^2 -2\tilde c s 
     + 2\left(\EE \| (\bZ + \tilde c 1/\sqrt{n})_+ \|^2\right)^{1/2}  , 
\end{align*} 
where $\tilde c \in \RR$ is arbitrary. Taking $n$ to infinity, we get that
\[
\limsup_n \EE M(s) \leq \alpha s^2 -2\tilde c s + 2\sqrt{d(\tilde c)} .
\]
Let us now take $\tilde c$ as the unique solution of the equation 
$f(x) = s$. Using the identity $\sqrt{d(x)} = x f(x) + g(x)$, we then obtain 
\[
\limsup_n \EE M(s) \leq \alpha f(\tilde c)^2 -2\tilde c f(\tilde c) 
    + 2\sqrt{d(\tilde c)} = r_\alpha(\tilde c) \leq r_\alpha(c) 
\]
by Lemma~\ref{l-wd}, and the bound~\eqref{max-M} is established. 

We note here that an analogous argument is used in \cite{mon-ric-16} to
prove~\cite[Lemma 10]{mon-ric-16}. However, in the context of
Proposition~\ref{bunin-bd} above, this argument requires the function $s\mapsto
\alpha s^2 - 2\tilde c s$ to be concave, which is incorrect when $\alpha > 0$.

The second step towards showing Proposition~\ref{bunin-bd} 
consists in showing that $\liminf_n \lambda_+^{\max}(\Sigma) \geq 
 \bs\lambda_+(\alpha, 1)$ w.p.1. This is done in \cite{mon-ric-16} 
with the help of an Approximate Message Passing algorithm. This argument 
can be applied to our situation practically without modification, completing
the proof of Proposition~\ref{bunin-bd}. 

\subsection*{Proof of Lemma~\ref{pt-extrm}}

We have $\bs\lambda_+(0,\kappa) = 2\kappa g(0) = \kappa\sqrt{2}$ 
by a small calculation. 

To establish the second result for $\alpha\neq 0$, let us focus on the equation
$c = (\alpha/\kappa) f(c)$. By the properties of the function $q$ provided
above, we see that $c  = c(\kappa)$ converges to $\infty$ as $\kappa\to 0$ 
when $\alpha > 0$, and to $- \infty$ when $\alpha < 0$. 
We also know that $g$ is bounded. Therefore, 
$\lim_{\kappa\to 0} \bs\lambda_+(\alpha,\kappa) = \lim_{\kappa\to\infty} 
\alpha f(c(\kappa)) = 0 \vee \alpha$ by recalling the properties of $f$.   

\section{Asymptotics of the free energy} 
\label{FFR}
From now on, we assume that $\kappa$ and $\alpha$ in our deformed GOE model are
chosen in such a way that $\bs\lambda_+(\kappa,\alpha) < 1$, and we set
$\varepsilon_\Sigma = (1 - \bs\lambda_+(\kappa,\alpha)) / 2$.  Since $\Sigma_n$
is now a large random matrix for which $\PP\left[\lambda_+^{\max}(\Sigma_n) >
1\right] > 0$ for any fixed $n$, we need to replace our SDE~\eqref{eds} with
the SDE  
\[
d x_t = x_t \left( 1 + \left( \widetilde\Sigma - I \right) x_t \right) dt 
+ \phi dt + \sqrt{2 T x_t} dB_t 
\]
with 
\[
\widetilde\Sigma = \Sigma 
   \1_{\lambda_+^{\max}(\Sigma) < 1 - \varepsilon_\Sigma} 
\]
to guarantee the well-definiteness of the solution. 
By a slight adaptation of Proposition~\ref{prop-eds} to the case where the
interaction matrix is random, this SDE admits an unique strong solution.
Furthermore, since $1 - \varepsilon_\Sigma = \bs\lambda_+(\kappa,\alpha) +
\varepsilon_\Sigma$, it holds by  Proposition~\ref{bunin-bd} that with
probability one, $\Sigma = \widetilde\Sigma$ for all $n$ large enough. 
We use hereinafter the standard notation in spin glass theory $\beta = 1/T$, 
and recall our  standing assumption $\phi\beta > 1$. 

The invariant distribution $G$ of the EDS~\eqref{eds}, which existence is
ensured by Proposition~\ref{erg}, becomes in our new setting the conditional
distibution 
$\widetilde G(dx) = \widetilde \cZ^{-1} \exp(\beta \widetilde \mcH(x)) dx$ 
given $\widetilde\Sigma$, with 
\[
\widetilde \mcH(x) = 
 \frac 12 x^\T \left( \widetilde\Sigma - I \right) x + (1 \cdot x) 
   + (\phi - 1 / \beta) (1 \cdot \log x),  
\] 
and 
\[
\widetilde \cZ = \int_{\RR_+^n} \, \exp(\beta \widetilde \mcH(x)) \ dx 
             < \infty .
\]
In what follows, we shall use a more convenient expression for the measure
$\exp(\beta\mcH(x)) \, dx$ on $\RR_+^n$ by writing 
\[
\exp(\beta\widetilde\mcH(x)) \, dx = 
 \exp\left( H_n(x) 
  \1_{\lambda_+^{\max}(\Sigma_n) < 1-\varepsilon_\Sigma}\right) 
  \, \mu_\beta^{\otimes n}(dx) 
\]
where $H_n : \RR_+^n \to \RR$ is the new Hamiltonian 
\[
H_n(x) = \frac{\beta}{2} x^\T \Sigma_n x = 
\frac{\beta\kappa}{2\sqrt{n}} x^\T W_n x + 
         \frac{\beta\alpha}{2} \frac{\ps{x}{1}^2}{n}, 
\]
and $\mu_\beta^{\otimes n}(dx)$ is the $n$--fold product measure of the 
positive measure $\mu_\beta$ defined on $\RR_+$ as 
\[
\mu_\beta(dx_1) = x_1^{\phi\beta - 1} 
   \exp\left( -\beta x_1^2 / 2 + \beta x_1 \right) \, d x_1, \quad x_1 \geq 0. 
\]
Our purpose is to identify the asymptotic behavior of the
free energy 
\[
\widetilde F_n = \frac 1n \EE \log \widetilde \cZ 
 = \frac 1n \EE \log \int_{\RR_+^n} \, 
 \exp\left(H_n(x)   \1_{\lambda_+^{\max}(\Sigma_n) < 1-\varepsilon_\Sigma}
  \right) 
  \, \mu_\beta^{\otimes n}(dx) . 
\]
We now construct the Parisi functional that will be used in our main theorem. 
To begin with, we set the notations for defining a
so-called Parisi measure, a.k.a.~the functional order parameter
(f.o.p.). Let $D > 0$, let $K > 0$ be an integer, and let 
\begin{equation}
\label{lq} 
\begin{split} 
& 0 < \lambda_0 < \cdots < \lambda_{K-1} < 1, \quad \text{and}  \\
& 0 = b_0 < \cdots < b_{K-1} < b_K = D.  
\end{split} 
\end{equation} 
To these reals we associate the Parisi measure $\zeta$ defined as 
\begin{equation}
\label{fop} 
\zeta(\{b_k\}) = \lambda_k - \lambda_{k-1} \quad \text{with } \quad
 \lambda_{-1} = 0, \ \lambda_K = 1. 
\end{equation} 
We shall denote the set of these finitely supported measures such that the 
maximum of the support is equal to $D$ as $\fop_D \subset \cP([0,D])$. 

Let $a > 0$, $h\geq 0$ and $\gamma\in\RR$, and let $(z_k)_{k\in[K]}$ be $K$ 
independent standard Gaussians. Define the random variable 
\[
X_{K,a} = 
\log\int_0^a 
  \exp\left( x \beta\kappa \sum_{k=1}^K  z_k \sqrt{b_k - b_{k-1}} 
  + \beta\alpha h x + \gamma x^2 \right) \mu_\beta(dx) . 
\] 
For $k = K-1, \ldots, 0$, set 
\[
X_{k,a} = \frac{1}{\lambda_k} 
  \log \EE_{z_{k+1}} \exp\left( \lambda_k X_{k+1} \right) , 
\]
where $\EE_{z_{k+1}}$ is the expectation with respect to the distribution of
$z_{k+1}$. With this, construction, $X_{0,a}$ is deterministic and depends on 
$\zeta$, $h$, and $\gamma$. Denote this real number as 
$X_{0,a}(\zeta,h,\gamma)$. Our Parisi function will be 
\[
P_a(\zeta,h,\gamma) = 
X_{0,a}(\zeta,h,\gamma) 
- \frac{\beta^2\kappa^2}{4} \sum_{k=0}^{K-1} \lambda_k (b_{k+1}^2- b_{k}^2) . 
\]
We can now state our main result. 
\begin{theorem}
\label{F->P} 
Assume that $\bs\lambda_+(\kappa,\alpha) < 1$. Then, if $\alpha \leq 0$, it
holds that 
\[
\widetilde F_n \tolong 
 \sup_{a \geq 0,D\geq 0} \ \inf_{\zeta\in \fop_D,h \geq 0, \gamma\in\RR} 
  \left( P_a(\zeta,h,\gamma) - \gamma D - \frac{\beta\alpha}{2} h^2 \right).
\] 
If $\alpha > 0$, then 
\[
\widetilde F_n \tolong 
 \sup_{a \geq 0,h \geq 0, D\geq0} \ \inf_{\zeta\in \fop_D,\gamma\in\RR} 
  \left( P_a(\zeta,h,\gamma) - \gamma D - \frac{\beta\alpha}{2} h^2 \right).
\] 
\end{theorem}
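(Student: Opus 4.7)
The overall strategy is to peel off the non-standard features of this Gibbs measure one by one---non-compact support on $\RR_+^n$, the rank-one ferromagnetic perturbation, and the norm constraint---and reduce the problem to a pure 2-spin SK-type model on a compact product space, for which a rigorous Parisi formula is available (Panchenko for vector/bounded spins, or Chen for the pure spherical 2-spin). Each peeled-off feature contributes one layer of the variational formula: $\sup_a$ from the truncation, $\sup_D/\inf_\gamma$ from the norm constraint, the $h$-layer from the rank-one term, and the inner $\inf_\zeta$ from the compact Parisi formula. The sign of $\alpha$ only affects whether the magnetization variable $h$ appears as a $\sup$ or an $\inf$, hence the two-case statement.

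Step 1 (truncation). Define $\widetilde F_n^{(a)}$ as the same free energy but with integration restricted to $[0,a]^n$. On the event $\lambda_+^{\max}(\Sigma_n)<1-\varepsilon_\Sigma$ the quadratic bound $x^\T\Sigma_n x\leq (1-\varepsilon_\Sigma)\|x\|^2$ combined with the $-\tfrac{\beta}{2}\|x\|^2$ present in $\mu_\beta^{\otimes n}$ gives a Gaussian decay rate on $\exp(\beta\widetilde\mcH)$ uniform in $n$, so that $\widetilde F_n=\sup_{a\geq 0}\widetilde F_n^{(a)}+o(1)$. Step 2 (decoupling the rank-one term). Apply the Legendre identity $\tfrac{\beta\alpha}{2}m^2=\sup_{h\geq 0}[\beta\alpha h m-\tfrac{\beta\alpha}{2}h^2]$ for $\alpha>0$ (implemented as a Hubbard--Stratonovich Gaussian integral and resolved by Laplace's method, pulling the $\sup_h$ outside the log) and the dual $\inf_{h\geq 0}$ identity for $\alpha\leq 0$; the restriction to $h\geq 0$ reflects $\langle x,1\rangle/n\geq 0$. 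Step 3 (spherical constraint). Slice the partition function by $D=\|x\|^2/n$, which concentrates under the Gibbs measure; realize the slicing by a Lagrange multiplier $\gamma$ adding $\gamma\|x\|^2$ to the Hamiltonian, giving the wrapping $\sup_D\inf_\gamma[\cdot-\gamma D]$.

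Step 4 (compact Parisi formula). With $a,D,h,\gamma$ fixed, the residual model is
\[
\frac{1}{n}\EE\log\int_{[0,a]^n}\exp\Bigl(\frac{\beta\kappa}{2\sqrt{n}}x^\T W_n x+\beta\alpha h\langle x,1\rangle+\gamma\|x\|^2\Bigr)\,\mu_\beta^{\otimes n}(dx),
\]
a pure 2-spin mixed $p$-spin model with a linear external field on the compact product space $[0,a]^n$, with a regular reference measure $\mu_\beta|_{[0,a]}$ (well-defined thanks to $\phi\beta>1$). Guerra's interpolation gives the Parisi upper bound $\inf_{\zeta\in\fop_D}P_a(\zeta,h,\gamma)$ after one identifies $D$ with the maximum of the support of $\zeta$; the matching lower bound comes from the Aizenman--Sims--Starr cavity scheme combined with Panchenko's synchronization of overlaps for mixed $p$-spin models on compact sets. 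Combining the four layers, swapping sup and inf where justified by convexity in each variable, produces the claimed formulas.

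The main obstacle is the justification of the assembly: namely, showing that one can legitimately move the $\sup_a$, $\sup_D$, $\inf_\gamma$, $\inf_\zeta$ (and $\sup_h$ or $\inf_h$) past one another and past $\lim_n$. This reduces to proving convexity or concavity of $\widetilde F_n^{(a,h,\gamma)}$ in each auxiliary parameter (convexity in $\gamma$ and in the Hamiltonian, concavity/convexity in $h$ according to $\sign(\alpha)$, monotonicity in $a$), together with coercivity estimates making the optimization effectively compact---the key coercive term in $h$ being $-\tfrac{\beta\alpha}{2}h^2$, which for $\alpha\leq 0$ traps $h$ in the inner $\inf$ but for $\alpha>0$ must be paired with an upper bound on $\langle x,1\rangle/n$ (itself controlled by $D$) to trap $h$ in the outer $\sup$. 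A secondary technical issue, expected to be routine, is checking that Panchenko's compact-support Parisi formula adapts without difficulty to the reference measure $\mu_\beta|_{[0,a]}$, which is finite and absolutely continuous with respect to Lebesgue on $[0,a]$ since $\phi\beta>1$.
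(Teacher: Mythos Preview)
Your four-layer architecture---truncate to compact, linearize the rank-one term, slice by self-overlap with a Lagrange multiplier, then invoke Panchenko's compact-support Parisi formula---is exactly the paper's strategy, and for $\alpha>0$ your Legendre/Hubbard--Stratonovich decoupling in Step~2 is what the paper carries out (Section~\ref{alpha>0}).

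The genuine gap is the case $\alpha\leq 0$. The dual identity $\tfrac{\beta\alpha}{2}m^2=\inf_{h}[\beta\alpha h m-\tfrac{\beta\alpha}{2}h^2]$ only yields the inequality in one direction: $\widetilde F_n$ is bounded above by the external-field free energy minus $\tfrac{\beta\alpha}{2}h^2$ for each fixed $h$. Passing to the limit and then to $\inf_h$ produces $\inf_h\sup_{a,D}\inf_{\zeta,\gamma}$, which by the minimax inequality sits \emph{above} the target $\sup_{a,D}\inf_{\zeta,h,\gamma}$---the wrong side for a matching upper bound. More seriously, there is no Legendre route to the lower bound at all: the $\inf_h$ lives inside the log-integral and cannot be extracted without knowing that the empirical magnetization $\eta_n(x)=\ps{x}{1}/n$ concentrates under the Gibbs measure, and for $\alpha<0$ Hubbard--Stratonovich gives an oscillatory integral to which Laplace's method does not apply. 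Your closing remark that the sup/inf swaps are ``justified by convexity'' does not cover this: the swap $\inf_h\sup_D\leftrightarrow\sup_D\inf_h$ is between variables neither of which enters the Parisi functional convexly in the needed direction.

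The paper avoids both difficulties by \emph{not} decoupling the rank-one term before interpolation when $\alpha\leq 0$. For the upper bound, the magnetization parameter $h$ is built directly into the Guerra Hamiltonian $V_t$ (see~\eqref{Vt}); the derivative $\partial_t\varphi^{\Delta_\varepsilon(D)}$ then contains the term $\tfrac{\beta\alpha}{2}\EE\EGibbs{(\eta_n-h)^2}_t\leq 0$, which delivers the bound with $\inf_h$ already at the level of the fixed shell $\Delta_\varepsilon(D)$ and hence inside $\sup_D$. For the lower bound, the Aizenman--Sims--Starr functional $\bs\chi_{m,n}$ in~\eqref{chimn} depends not only on the overlap array but also on the sequence of empirical means $(\ps{x^k}{1}/n)_k$, so one needs these to concentrate in addition to the Ghirlanda--Guerra identities holding asymptotically. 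The paper enforces this by adding a \emph{second} perturbation $n^\delta s\,\eta_n(x)$ (random $s$, with $\delta\in(1/2,1)$) to the Hamiltonian alongside the standard GG perturbation; Lemma~\ref{GGas} shows this drives $\EE\EGibbs{|\eta_n-\EE\EGibbs{\eta_n}|}\to 0$ without shifting the free energy at leading order. This concentration mechanism is the missing technical ingredient in your proposal.
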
 
To establish this result, we shall follow Panchenko's approach
in~\cite{pan-18}.  The main particularities of our proof are related with 
the non-compactness of the support of $\mu_\beta$ and the presence of the
factor $\alpha$ in the expression of the Hamiltonian $H$. 

A comprehensive analysis of the limits stated by the previous theorem remains
to be done.  We conjecture that these limits are infinite when
$\bs\lambda_+(\kappa,\alpha) > 1$. In the case where
$\bs\lambda_+(\kappa,\alpha) < 1$, a challenge is to prove that each of the
saddles that appear in the expressions of these limits is attained by an unique
probability measure $\zeta_\star$ which is compactly supported. In the physics
literature, the study of the structure of $\zeta_\star$ has attracted a great
deal of interest.  For instance, it is asserted that in the low temperature and
low immigration rate regime $\beta\to\infty$ and $\phi\to 0$, the measure
$\zeta_\star$ is reduced to a Dirac delta when $\kappa < 1/\sqrt{2}$ (see
Figure~\ref{bun-crv}), which corresponds to the so-called Replica Symmetry
regime. For $\kappa > 1/\sqrt{2}$, the measure $\zeta_\star$ has an 
infinite support, corresponding to the so-called Full Replica Symmetry 
Breaking regime \cite{alt-roy-cam-bir-21}. 

We now prove Theorem~\ref{F->P}. 

\section{Proof of Theorem~\ref{F->P}: some preparation} 
\label{prep} 

\subsection{Ruelle probability cascades} 
It is well-known that a Parisi function is intimately connected with the
so-called Ruelle Probability Cascades (RPC) which definition and properties are
discussed at length in \cite[Chapter~2]{pan-livre13}. We recall the main
properties of these objects for the reader's convenience, and we provide an
equivalent expression of our Parisi function $P_a(\zeta,h,\gamma)$ involving
RPC's.  Arranging the $\lambda_k$'s in~\eqref{lq} in the vector $\bs\lambda =
(\lambda_0,\ldots,\lambda_{K-1})$, the notation $\RPC_{\bs\lambda}
\in\cP(\cP(\NN^K))$ will denote the distribution of a RPC
$(v_{\bi})_{\bi\in\NN^K}$ with parameters the elements of the vector $\bs
\lambda$. We shall repeatedly use the following result, obtained by a direct
adaptation of \cite[Theorem 2.9]{pan-livre13}.

\begin{proposition}
\label{P-RPC} 
Let $d > 0$ be an integer.  Consider the vector $\bs\lambda$ above, $K$ random
vectors $\bz_1,\ldots,\bz_K \stackrel{\text{iid}}{\sim} \cN(0,I_d)$, and a 
real function $\cX_K(\bz_1,\ldots,\bz_K)$ satisfying $\EE
\exp\lambda_{K-1}\cX_K < \infty$. For $k = K-1$ to $0$, define recursively 
\[
\cX_k = \frac{1}{\lambda_k} \log \EE_{\bz_{k+1}} e^{\lambda_k \cX_{k+1}} , 
\]
where $\EE_{\bz_{k+1}}$ is the expectation with respect to the distribution of
$\bz_{k+1}$.  Consider a family of i.i.d.~random vectors $(\bz_{\bj})_{\bj \in
\NN \cup \NN^2 \cup \cdots \cup \NN^K}$ with distribution $\cN(0, I_d)$. For
$\bi \in \NN^k$, write $\bi = (i_1,i_2,\ldots, i_k)$. Then, it holds that 
\[
\cX_0 = \EE\log \sum_{\bi\in\NN^K} v_{\bi} 
 \exp \cX_K(\bz_{i_1}, \bz_{i_1 i_2}, \ldots, \bz_{i_1,i_2,\ldots, i_K}) ,
\]
where $(v_{\bi})_{\bi\in\NN^K} \sim \RPC_{\bs\lambda}$ is independent of 
the family $(\bz_{\bj})$. 
\end{proposition}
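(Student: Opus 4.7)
The plan is to prove this identity by induction on the depth $K$, adapting the argument of \cite[Theorem~2.9]{pan-livre13}. The proof rests on the hierarchical construction of the RPC as nested Poisson-Dirichlet processes together with the classical log-Laplace identity for $\text{PD}(\lambda, 0)$: for $(u_i) \sim \text{PD}(\lambda, 0)$ with $\sum_i u_i = 1$ and i.i.d.\ real $(\cY_i)$ independent of $(u_i)$ with $\EE e^{\lambda \cY} < \infty$,
\[
\EE \log \sum_{i \in \NN} u_i e^{\cY_i} = \frac{1}{\lambda} \log \EE e^{\lambda \cY} .
\]
The base case $K = 1$ is immediate: the RPC reduces to a single $\text{PD}(\lambda_0, 0)$ sequence, and the identity above, applied with $\cY_i = \cX_1(\bz_i)$, gives $\cX_0 = \EE \log \sum_{i_1 \in \NN} v_{(i_1)} e^{\cX_1(\bz_{i_1})}$.

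For the induction step $K-1 \Rightarrow K$, I would use the hierarchical factorization $v_{i_1, \bi'} = u_{i_1} v^{(i_1)}_{\bi'}$, where $(u_{i_1}) \sim \text{PD}(\lambda_0, 0)$ and, for each $i_1$ independently, $(v^{(i_1)}_{\bi'})_{\bi' \in \NN^{K-1}}$ is an RPC with parameters $(\lambda_1, \ldots, \lambda_{K-1})$, everything being independent of the Gaussian tree $(\bz_{\bj})$. Applying the induction hypothesis within each subtree rooted at $i_1$ to the function $(\bz_1', \ldots, \bz_{K-1}') \mapsto \cX_K(\bz_{i_1}, \bz_1', \ldots, \bz_{K-1}')$ with parameters $(\lambda_1, \ldots, \lambda_{K-1})$ reproduces exactly $\cX_1(\bz_{i_1})$ at the subtree root, as one verifies by unfolding the recursion $\cX_k = \lambda_k^{-1} \log \EE_{\bz_{k+1}} e^{\lambda_k \cX_{k+1}}$. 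The outer layer $\sum_{i_1} u_{i_1} \cdot (\cdots)$ is then collapsed by a second application of the $\text{PD}(\lambda_0)$ identity. A side check is that $\EE e^{\lambda_{k-1} \cX_k} < \infty$ at every level, so that each log-Laplace is well defined; this follows from the standing assumption $\EE e^{\lambda_{K-1} \cX_K} < \infty$ by Jensen's inequality applied to the concave power $x \mapsto x^{\lambda_{k-1}/\lambda_k}$ (using $\lambda_{k-1} < \lambda_k$):
\[
\EE e^{\lambda_{k-1} \cX_k} = \EE \bigl( \EE_{\bz_{k+1}} e^{\lambda_k \cX_{k+1}} \bigr)^{\lambda_{k-1}/\lambda_k} \leq \bigl( \EE e^{\lambda_k \cX_{k+1}} \bigr)^{\lambda_{k-1}/\lambda_k} < \infty .
\]

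The main obstacle, in my view, is that the naive expected-log induction hypothesis is not directly strong enough: combining it with the outer $\text{PD}(\lambda_0)$ identity produces $\lambda_0^{-1} \log \EE R_1^{\lambda_0}$ for $R_{i_1}$ the subtree sum, and matching this to $\cX_0 = \lambda_0^{-1} \log \EE e^{\lambda_0 \cX_1(\bz_1)}$ requires the conditional moment identity $\EE[R_1^{\lambda_0} \mid \bz_1] = e^{\lambda_0 \cX_1(\bz_1)}$. As in Panchenko, this is resolved by strengthening the inductive statement to a family of conditional moment identities at each depth $k$, namely
\[
\EE\bigl[(\text{subtree sum at level } k)^{\lambda_{k-1}} \,\big|\, \bz_{i_1}, \ldots, \bz_{i_1 \ldots i_k}\bigr] = e^{\lambda_{k-1} \cX_k(\bz_{i_1}, \ldots, \bz_{i_1 \ldots i_k})} ,
\]
which uses the full Bolthausen-Sznitman invariance of $\text{PD}(\lambda_k, 0)$ at each level rather than only the expected-log identity. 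The target expected-log statement at the root is then recovered (in the spirit of the convention $\lambda_{-1} = 0$) by a final application of the base log-Laplace identity. The only genuine departure from \cite[Theorem~2.9]{pan-livre13} is that $\cX_K$ here depends on the entire path of Gaussians $(\bz_{i_1}, \bz_{i_1 i_2}, \ldots, \bz_{i_1 \ldots i_K})$ rather than on a single leaf variable, and it is routine to check that this extension does not affect the argument.
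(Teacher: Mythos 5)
There is a genuine gap, and it sits exactly where you placed the weight of the argument: the ``strengthened'' inductive statement. For the normalized subtree average $R_{i_1}=\sum_{\bi'}v^{(i_1)}_{\bi'}\exp\cX_K(\bz_{i_1},\ldots)$, the conditional moment identity $\EE\bigl[R_{i_1}^{\lambda_0}\mid \bz_{i_1}\bigr]=e^{\lambda_0\cX_1(\bz_{i_1})}$ is false; in fact it is \emph{incompatible} with the expected-log identity you are trying to prove. Indeed, the induction hypothesis applied inside the subtree gives $\EE[\log R_{i_1}\mid\bz_{i_1}]=\cX_1(\bz_{i_1})$, and since $\log R_{i_1}$ is non-degenerate given $\bz_{i_1}$ (whenever $\cX_K$ genuinely depends on the deeper variables), strict Jensen applied to $x\mapsto e^{\lambda_0 x}$ yields $\EE[R_{i_1}^{\lambda_0}\mid\bz_{i_1}]>e^{\lambda_0\cX_1(\bz_{i_1})}$. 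A concrete instance: $K=2$, $\lambda_1=1/2$, $\lambda_0=1/4$, with $\exp\cX_2$ (approximately) a Bernoulli$(1/2)$ indicator of the leaf variable. Then $R$ is (a limit of) $\sum_{n\in I}v_n$ with $(v_n)\sim \mathrm{PD}(1/2,0)$ and $I$ an independent thinning, which is $\mathrm{Beta}(1/2,1/2)$-distributed, so $\EE R^{1/4}=\mathrm{B}(3/4,1/2)/\mathrm{B}(1/2,1/2)\approx 0.76$, whereas $e^{\lambda_0\cX_1}=2^{-1/2}\approx 0.71$. The root cause is that your starting factorization $v_{i_1,\bi'}=u_{i_1}v^{(i_1)}_{\bi'}$ with $(u_{i_1})\sim\mathrm{PD}(\lambda_0,0)$ \emph{independent} of i.i.d.\ normalized sub-cascades is not the law of the RPC: the aggregated weight of the subtree rooted at $i_1$ is $u_{i_1}S_{i_1}/\sum_j u_jS_j$, where $S_{i_1}$ is the unnormalized subtree mass, and for stable-based cascades the normalized weights of a subtree are \emph{not} independent of their total mass (that independence is special to the Gamma subordinator; its failure in the stable case is precisely why $\mathrm{PD}(\lambda,\theta)\neq\mathrm{PD}(\lambda,0)$ for $\theta\neq 0$). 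So the marks $R_{i_1}$ are correlated with the outer weights through $S_{i_1}$, and the $\mathrm{PD}(\lambda_0,0)$ log-Laplace identity cannot be invoked with the unconditioned law of $R$. The same computation shows that the ``independent product of normalized PDs'' object you implicitly use gives a different value of the functional than the RPC, so no repair within this decomposition will work.

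The way Panchenko actually proves \cite[Theorem~2.9]{pan-livre13} avoids normalized subtree averages altogether: write the quantity as a difference of unnormalized expected logs, $\EE\log\sum_{\bi}w_{\bi}e^{\cX_K}-\EE\log\sum_{\bi}w_{\bi}$, and collapse one level at a time using the invariance of the Poisson process with intensity $\lambda x^{-1-\lambda}dx$ under multiplication of its points by i.i.d.\ positive factors: conditionally on the shallower variables this replaces $e^{\cX_K}$ by $e^{\cX_{K-1}}$ at the cost of an i.i.d.\ total-mass factor attached to each level-$(K-1)$ vertex, and since these factors are independent of the Gaussians, the identical factors are produced in the second term and cancel in the difference. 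No moment identity for normalized weighted sums is needed, and the tilting of marks under rearrangement never has to be confronted. Two smaller remarks: your base case and the integrability check via Jensen with the exponent $\lambda_{k-1}/\lambda_k$ are fine; and note that the paper offers no proof of this proposition but simply invokes \cite[Theorem~2.9]{pan-livre13}, which is already stated for functions of the entire path of vertex variables, so the path-dependence you single out as the only needed extension is in fact already covered there.
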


We will also need the following application of this result.  Write $\bz_k =
[z_{k,l}]_{l=1}^d$, and assume that  $\cX_K$ is of the form
$\cX_K(\bz_1,\ldots,\bz_K) = \sum_{l=1}^d \cU_{K}(z_{1,l},\ldots,z_{K,l})$ 
for some real function $\cU_{K}$. Performing the induction 
$\cX_K\to \cX_{K-1}\to \cdots\to \cX_0$ as in the statement of the previous 
proposition, we can check that $\cX_0 = d \cU_0$, where $\cU_0$ is
also obtained from $\cU_K$ by the same kind of induction.

We now provide an expression of our Parisi function $P_a$ in terms of a 
RPC with the help of the former proposition. As is usual in the SK literature,
we introduce a function $\xi$ capturing the covariance function of the Gaussian
process $x \mapsto \beta\kappa x^\T W x / (2\sqrt{n})$. Namely, we write 
\[
\frac{\beta^2\kappa^2}{4n} \EE (x^1)^\T W x^1 (x^2)^\T W x^2 = 
 n \xi(R_{12}) \quad \text{with} \quad 
 R_{ij} = \frac{\ps{x^i}{x^j}}{n} \quad \text{and} \quad 
\xi(x) = \frac{\beta^2\kappa^2}{2} x^2. 
\]
We also find it more readable and more compliant with the SK literature to 
set $\theta(x) = x \xi'(x) - \xi(x)$, even though in our case, this 
function trivially reads $\theta(x) = \xi(x)$. 

With the help of Proposition~\ref{P-RPC}, we can provide an expression of 
the term $X_{0,a}$ in the expression of $P_a$ in terms of a RPC. 
With our new notations, the function $X_{K,a}$ is re-written 
\[
X_{K,a} = \log\int_0^a 
  \exp\left( x \sum_{k=1}^K  z_k \sqrt{\xi'(b_k) - \xi'(b_{k-1})} 
  + \beta\alpha h x + \gamma x^2 \right) \mu_\beta(dx) . 
\] 
Set $d=1$, and let $\cX_K = X_{K,a}$ in Proposition~\ref{P-RPC} with 
$\bz_k = z_k$. Consider a family of independent standard Gaussians 
$(z_{\bj})_{\bj \in \NN \cup \NN^2 \cup \cdots \cup \NN^K}$, and define the 
family of random variables $( q_{\bi})_{\bi\in\NN^K}$ as follows. For 
$\bi = (i_1,i_2,\ldots, i_K) \in \NN^K$, set 
\begin{equation}
\label{qi} 
q_{\bi} = \beta\kappa \sum_{k=1}^K z_{i_1,i_2,\ldots,i_k} 
  \sqrt{b_k - b_{k-1}} 
 = \sum_{k=1}^K z_{i_1,i_2,\ldots,i_k} \sqrt{\xi'(b_k) - \xi'(b_{k-1})}. 
\end{equation} 
Then, by Proposition~\ref{P-RPC} above, we have 
\[
X_{0,a}(\zeta,h,\gamma) = \EE \log \sum_{\bi \in \NN^K} 
 v_{\bi} \int_0^a e^{x q_{\bi} + \beta\alpha h x + \gamma x^2} \mu_\beta(dx) . 
\] 
We now consider the term 
$- (\beta^2\kappa^2 / 4) \sum_{k=0}^{K-1} \lambda_k (b_{k+1}^2- b_{k}^2)$ 
in the expression of $P_a$. First, it will be convenient to write 
\begin{align*} 
 \frac{\beta^2\kappa^2}{4} 
   \sum_{k=0}^{K-1} \lambda_k (b_{k+1}^2- b_{k}^2)
 &=  \frac 12 \sum_{k=0}^{K-1} \lambda_k (\theta(b_{k+1})- \theta(b_{k})) \\
 &= - \frac 12 \sum_{k=0}^K (\lambda_k - \lambda_{k-1}) \theta(b_k) 
 + \frac 12 \theta(b_K)  
 = - \frac 12 \int \theta(b) \zeta(db) + \frac 12 \theta(D). 
\end{align*} 
Second, if we set $d = 1$ and 
\[
\cX_K(\bz_1,\ldots,\bz_K) = 
 \frac{\beta\kappa}{\sqrt{2}} \left( \bz_1 \sqrt{b_1^2 - b_{0}^2} + \cdots 
  + \bz_K \sqrt{b_K^2 - b_{K-1}^2} \right) 
 = \sum_{k=1}^K \bz_k \sqrt{\theta(b_k) - \theta(b_{k-1})}  
\]
in Proposition~\ref{P-RPC}, then, using the expression of the moment 
generating function of a Gaussian, we easily obtain that 
\[
\cX_0 = \frac 12 \sum_{k=0}^{K-1} \lambda_k (\theta(b_{k+1})- \theta(b_{k}))
  = - \frac 12 \int \theta(b) \zeta(db) + \frac 12 \theta(D). 
\]
similarly to the family $( q_{\bi})_{\bi\in\NN^K}$, define the family of 
random variables $( y_{\bi} )_{\bi\in\NN^K}$ as 
\begin{equation}
\label{yi} 
y_{\bi} = \frac{\beta\kappa}{\sqrt{2}} \sum_{k=1}^K z_{i_1,i_2,\ldots,i_k} 
  \sqrt{b_k^2 - b_{k-1}^2} 
 = \sum_{k=1}^K z_{i_1,i_2,\ldots,i_k} \sqrt{\theta(b_k) - \theta(b_{k-1})} .
\end{equation} 
Then, by Proposition~\ref{P-RPC}, we also have 
\[
 \cX_0 = \EE \log \sum_{\bi \in \NN^K} v_{\bi} e^{y_{\bi}} .
\]
In short, we can write 
\begin{align*} 
P_a(\zeta,h,\gamma) &= 
X_{0,a}(\zeta,h,\gamma) 
   + \frac 12 \int \theta(b) \zeta(db) - \frac 12 \theta(D)  \\ 
&= \EE \log \sum_{\bi \in \NN^K} 
 v_{\bi} \int_0^a e^{x q_{\bi} + \beta\alpha h x + \gamma x^2} 
   \mu_\beta(dx) 
 - \EE \log \sum_{\bi \in \NN^K} v_{\bi} e^{y_{\bi}} 
\end{align*}
for $\zeta\in\fop_D$. 

\subsection{Useful bounds} 
We shall prove Theorem~\ref{F->P} by computing in turn an upper bound on 
$\limsup \widetilde F_n$ and a lower bound on $\liminf\widetilde F_n$. 
To this end, we shall rely on the two following lemmata thanks to which 
$\widetilde F_n$ will be replaced with more easily manageable free 
energies. 

Given $A > 0$, let $B_+^n(\sqrt{n} A) = \{ x \in \RR_+^n \ : \ \| x \| \leq
\sqrt{n} A \}$ be the closed Euclidean $\sqrt{n} A$--ball of $\RR_+^n$. 
Let $F^A_n$ be the free energy defined as 
\[
F^A_{n} = \frac 1n \EE \log\int_{B_+^n(\sqrt{n} A)} e^{\beta\mcH(x))} dx 
 = \frac 1n \EE\log \int_{B_+^n(\sqrt{n} A)} 
 e^{H_n(x)} \mu_\beta^{\otimes n}(dx) ,  
\]
where the indicator $\1_{\lambda_+^{\max}(\Sigma) < 1 - \varepsilon_\Sigma}$ 
is absent, but where the integration is performed on $B_+^n(\sqrt{n} A)$. 
The free energy $F^A_n$ is much more easily manageable that $\widetilde F_n$
because $H_n(x)$ is a Gaussian process, contrary to 
$H_n(x) \1_{\lambda_+^{\max}(\Sigma) < 1 - \varepsilon_\Sigma}$. 
The following lemma, proven in Appendix~\ref{prf-F<F^A}, will let us focus  
our upper bound analysis on $F^A_n$:  
\begin{lemma} 
\label{F<F^A} 
There exists $A > 0$ large enough such that 
$\displaystyle{\limsup_n \widetilde F_n \leq \limsup_n F_n^A}$.
\end{lemma}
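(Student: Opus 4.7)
The plan is to split $\widetilde F_n$ according to the event $E := \{\lambda_+^{\max}(\Sigma_n) < 1-\varepsilon_\Sigma\}$. On $E^c$, the integrand in $\widetilde \cZ$ reduces to the constant $\mu_\beta^{\otimes n}(\RR_+^n) = M_0^n$ with $M_0 := \int_0^\infty x^{\phi\beta-1} e^{-\beta x^2/2 + \beta x}\,dx < \infty$ (finite by the standing assumption $\phi\beta > 1$), so its contribution to $\widetilde F_n$ is $\PP(E^c)\log M_0$, which tends to zero since $\PP(E^c) \to 0$ by Proposition~\ref{bunin-bd} combined with the definition of $\varepsilon_\Sigma$.

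On $E$, write $\widetilde \cZ = \cZ^A + \cZ^{A,c}$ where $\cZ^A$ is the integral defining $e^{n F_n^A}$ (inside the expectation) and $\cZ^{A,c}$ is the integral over $\RR_+^n \setminus B_+^n(\sqrt{n}A)$. The key observation is that for $x \in \RR_+^n$ and on the event $E$, one has $H_n(x) = \tfrac{\beta}{2} x^\T \Sigma_n x \leq \tfrac{\beta(1-\varepsilon_\Sigma)}{2}\|x\|^2$, which combined with the factor $e^{-\beta\|x\|^2/2}$ built into $\mu_\beta^{\otimes n}$ yields the product-form majorant
\[
e^{H_n(x)}\,\mu_\beta^{\otimes n}(dx) \leq \prod_{i=1}^n x_i^{\phi\beta-1}\exp\left(-\tfrac{\beta\varepsilon_\Sigma}{2} x_i^2 + \beta x_i\right)dx_i.
\]
A Chernoff truncation $\1_{\|x\|^2 > nA^2} \leq e^{\lambda(\|x\|^2 - nA^2)}$ with $\lambda \in (0, \beta\varepsilon_\Sigma/2)$ then gives, on $E$,
\[
\cZ^{A,c} \leq e^{-\lambda n A^2}\, M_\lambda^n, \qquad M_\lambda := \int_0^\infty x^{\phi\beta-1} e^{-(\beta\varepsilon_\Sigma/2-\lambda)\, x^2 + \beta x}\,dx < \infty.
\]

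For a matching lower bound on $\cZ^A$, introduce the auxiliary event $F := \{\|\Sigma_n\|_{\mathrm{op}} \leq C_0\}$; for $C_0$ sufficiently large, $\PP(F^c) \leq e^{-cn}$ by Gaussian concentration of the GOE spectral norm (the rank-one perturbation contributing $O(1)$ to the operator norm). On $F$, for any $\delta \in (0, A)$, the inclusion $[0,\delta]^n \subset B_+^n(\sqrt{n}A)$ yields
\[
\cZ^A \geq \int_{[0,\delta]^n} e^{H_n(x)}\,\mu_\beta^{\otimes n}(dx) \geq e^{-\beta C_0 n\delta^2/2}\,\mu_\beta([0,\delta])^n.
\]
Fixing $\delta$ small and then $A$ large enough so that $\lambda A^2$ exceeds $\log M_\lambda + \beta C_0 \delta^2/2 - \log\mu_\beta([0,\delta]) + 1$ forces $\cZ^{A,c}/\cZ^A \leq e^{-n}$ on $E \cap F$, so that $\log(1 + \cZ^{A,c}/\cZ^A) = O(e^{-n})$ uniformly.

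Assembling the estimates, on $E \cap F$ one has $\log\widetilde\cZ \leq \log \cZ^A + O(e^{-n})$, and the contributions of $E^c$ and of $E \cap F^c$ to $\widetilde F_n - F_n^A$ are absorbed through Cauchy--Schwarz, using the crude deterministic bound $|\log \cZ^A|/n \leq C(1 + \|\Sigma_n\|_{\mathrm{op}})$ (obtained from the same upper and lower integrand bounds, now with $\|\Sigma_n\|_{\mathrm{op}}$ replacing $C_0$) together with $\EE \|\Sigma_n\|_{\mathrm{op}}^2 = O(1)$; symmetrically, $\frac{1}{n}\EE[\1_{E^c}\log \cZ^A]$ is $o(1)$. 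This yields $\limsup_n \widetilde F_n \leq \limsup_n F_n^A$. The main obstacle is not the Chernoff tail bound but rather the construction of a pathwise lower bound on $\cZ^A$ that survives the expectation: since $H_n$ can be arbitrarily negative when $\|\Sigma_n\|_{\mathrm{op}}$ is atypically large, one must introduce the cutoff $F$ and quantitatively control the bad events via the sub-Gaussian tail of the GOE spectral norm.
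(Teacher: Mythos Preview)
Your proof is correct and follows essentially the same architecture as the paper's: split on the event $\cE = \{\lambda_+^{\max}(\Sigma_n) < 1-\varepsilon_\Sigma\}$, show that the tail integral over $\RR_+^n \setminus B_+^n(\sqrt{n}A)$ is exponentially small compared to the ball integral on the good event, and absorb the bad events via Cauchy--Schwarz using $\EE\|\Sigma_n\|_{\mathrm{op}}^2 = O(1)$.

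The technical implementation differs in two places, and in both yours is arguably cleaner. For the lower bound on $\cZ^A$, the paper integrates over $B_+^n(\sqrt{n}A) \cap \{x_i \geq 1\ \forall i\}$ and invokes Gaussian tail asymptotics, whereas your choice of $[0,\delta]^n$ is more direct. For the tail bound on $\cZ^{A,c}$, the paper completes the square and uses Gaussian concentration on $\|Z\|$, while your Chernoff truncation $\1_{\|x\|^2 > nA^2} \leq e^{\lambda(\|x\|^2 - nA^2)}$ exploits the product structure of the majorant immediately. The one organizational difference is that you introduce an explicit auxiliary event $F = \{\|\Sigma_n\|_{\mathrm{op}} \leq C_0\}$ to get a uniform ratio bound $\cZ^{A,c}/\cZ^A \leq e^{-n}$, while the paper instead carries $\|\Sigma\|$ through its lower bound and truncates on $\{\|\Sigma\| \geq A^2 - 1\}$ at the very end; both routes lead to the same Cauchy--Schwarz mop-up and neither buys anything the other does not.
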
 

For $a > 0$, define now the free energy 
\[
F_{a,n} = \frac 1n \EE \log\int_{[0,a]^n} e^{\beta\mcH(x))} dx 
 = \frac 1n \EE\log \int_{[0,a]^n} e^{H_n(x)} \mu_\beta^{\otimes n}(dx) . 
\]
The following lemma is proven is Appendix~\ref{prf-F>Fa}.  
\begin{lemma}
\label{F>Fa} 
$\liminf_n \widetilde F_n \geq \sup_{a > 0} \liminf_n F_{a,n}$. 
\end{lemma}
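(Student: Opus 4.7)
The plan is to leverage the inclusion $[0,a]^n \subset \RR_+^n$ together with the fact that, on the high-probability event $E_n = \{\lambda_+^{\max}(\Sigma_n) < 1-\varepsilon_\Sigma\}$, the truncating indicator in the exponent of $\widetilde \cZ$ equals $1$. Setting $\cZ_{a,n} = \int_{[0,a]^n} e^{H_n(x)} \mu_\beta^{\otimes n}(dx)$, this immediately gives $\widetilde \cZ \geq \cZ_{a,n}$ on $E_n$. On $E_n^c$ the indicator vanishes, so $\widetilde \cZ = m^n$ with $m = \mu_\beta(\RR_+) \in (0,\infty)$ (finite since $\phi\beta > 1$ handles integrability at $0$ and the Gaussian factor handles $+\infty$). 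Decomposing along $E_n$ and $E_n^c$,
\[
n \widetilde F_n = \EE[\log \widetilde \cZ \cdot \1_{E_n}] + n (\log m) \PP(E_n^c) \geq \EE[\log \cZ_{a,n} \cdot \1_{E_n}] + n (\log m) \PP(E_n^c).
\]

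By Proposition~\ref{bunin-bd} and the choice $\varepsilon_\Sigma = (1-\bs\lambda_+(\kappa,\alpha))/2$, we have $\lambda_+^{\max}(\Sigma_n) \to \bs\lambda_+(\kappa,\alpha) = 1 - 2\varepsilon_\Sigma$ almost surely, so $\PP(E_n^c) \to 0$, and the last term contributes $o(1)$ after division by $n$. Writing $\EE[\log \cZ_{a,n} \cdot \1_{E_n}] = n F_{a,n} - \EE[\log \cZ_{a,n} \cdot \1_{E_n^c}]$, the proof reduces to showing $\EE[\log \cZ_{a,n} \cdot \1_{E_n^c}] = o(n)$.

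This last step is the main obstacle, since the event $E_n^c$ is precisely where the Hamiltonian $H_n$ becomes pathological. I would first obtain a crude deterministic bound of the form
\[
|\log\cZ_{a,n}| \leq C_1(a)\, n + C_2(a) \sqrt{n}\, \|W_n\|_{\text{op}},
\]
by controlling $|H_n(x)|$ on $[0,a]^n$ via $|x^\T W_n x| \leq \|x\|^2 \|W_n\|_{\text{op}} \leq n a^2 \|W_n\|_{\text{op}}$ and $\ps{x}{1}^2 \leq n^2 a^2$, together with the $O(n)$ contribution from $\mu_\beta([0,a])^n$ to the log-partition function (both as upper and lower bounds, using that $\mu_\beta([0,a]) > 0$ for $a > 0$). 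Standard GOE estimates give $\EE \|W_n\|_{\text{op}}^2 = O(n)$, whence $\EE (\log\cZ_{a,n})^2 = O(n^2)$. Combined with $\PP(E_n^c) \to 0$, the Cauchy--Schwarz inequality
\[
|\EE[\log\cZ_{a,n} \cdot \1_{E_n^c}]| \leq \sqrt{\EE(\log\cZ_{a,n})^2}\,\sqrt{\PP(E_n^c)} = o(n)
\]
yields the required control. Dividing by $n$ and taking $\liminf_n$ gives $\liminf_n \widetilde F_n \geq \liminf_n F_{a,n}$ for each $a > 0$, and the supremum over $a$ concludes the proof.
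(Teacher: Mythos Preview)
Your proof is correct and follows essentially the same route as the paper: split along the event $\cE = E_n$, use the inclusion $[0,a]^n \subset \RR_+^n$ on the good event, and kill the $\cE^{\text{c}}$ contribution via Cauchy--Schwarz combined with an operator-norm bound on the random matrix (the paper phrases this with $\|\Sigma - I\|$ rather than $\|W_n\|_{\text{op}}$, but the two are equivalent up to constants). One cosmetic remark: integrability of $\mu_\beta$ near $0$ only needs $\phi\beta > 0$, not $\phi\beta > 1$; the latter condition is used elsewhere in the paper but is not what guarantees $m < \infty$ here.
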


\section{Proof of Theorem~\ref{F->P}: upper bound for $\alpha\leq 0$} 
\label{sec:ubnd} 

Our purpose here is to establish the following proposition. 
\begin{proposition}
\label{up-bnd}
Assume that $\alpha \leq 0$. Then, 
\[
\limsup_n \widetilde F_n \leq 
 \sup_{a> 0, D > 0} \ \inf_{\zeta\in \fop_D, h \geq 0, \gamma\in\RR} 
  \left( P_a(\zeta,h,\gamma) - \gamma D - \frac{\beta\alpha}{2} h^2 \right)  . 
\]
\end{proposition}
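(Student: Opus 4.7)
The plan is to combine Lemma~\ref{F<F^A}, a Legendre-type linearization of the rank-one $\alpha$-term, a Lagrange-multiplier treatment of the ball constraint, and Guerra's interpolation with RPCs in the form of Panchenko~\cite{pan-18}. First, by Lemma~\ref{F<F^A}, I replace $\widetilde F_n$ by $F_n^A$ for the $A$ provided by that lemma and set $D = A^2$, so that the ball $B_+^n(\sqrt{n}A)$ enforces $R_{11} = \|x\|^2/n \leq D$. I then dispose of the rank-one mean-field term by the identity
\begin{equation*}
\frac{\beta\alpha}{2n}\ps{x}{1}^2 \;=\; \inf_{h \geq 0}\Bigl(\beta\alpha h\ps{x}{1} - \frac{n\beta\alpha h^2}{2}\Bigr),
\end{equation*}
valid because $\alpha \leq 0$ and $\ps{x}{1} \geq 0$ on $\RR_+^n$. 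Substituting this inside $e^{H_n(x)}$ gives, for every $h \geq 0$,
\begin{equation*}
F_n^A \;\leq\; -\frac{\beta\alpha h^2}{2} + \frac{1}{n}\EE\log\int_{B_+^n(\sqrt{n}A)} \exp\!\Bigl(\frac{\beta\kappa}{2\sqrt{n}} x^\T W_n x + \beta\alpha h\,\ps{x}{1}\Bigr)\,\mu_\beta^{\otimes n}(dx),
\end{equation*}
reducing the analysis to a purely Gaussian Hamiltonian with an additional linear external field $h$.

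Second, I encode the ball constraint through a Lagrange multiplier $\gamma\in\RR$. A natural way is to foliate $B_+^n(\sqrt{n}A)$ by the spheres $\{\|x\|^2=nD'\}$, $D'\in[0,D]$, apply Laplace's principle to reduce $F_n^A$ asymptotically to $\sup_{D'\in[0,D]} F_n^{(D')}$ with $F_n^{(D')}$ the sphere free energy, and then use the Fenchel--Moreau inequality between the hard constraint $\|x\|^2 = nD'$ and the soft potential $\gamma\|x\|^2$ to write
\begin{equation*}
F_n^{(D')} \;\leq\; \inf_{\gamma\in\RR}\bigl(F_n^{\mathrm{soft}}(\gamma) - \gamma D'\bigr),
\end{equation*}
where $F_n^{\mathrm{soft}}(\gamma)=\frac{1}{n}\EE\log\int_{\RR_+^n}\exp(H_n(x)+\gamma\|x\|^2)\,\mu_\beta^{\otimes n}(dx)$. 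Integrability of $F_n^{\mathrm{soft}}(\gamma)$ in the relevant range of $\gamma$ is controlled by the a.s.\ bound $\lambda_+^{\max}(\Sigma_n)<1-\varepsilon_\Sigma$ from Proposition~\ref{bunin-bd}. This step produces both the $-\gamma D$ term in the statement and the freedom for $\gamma$ to range over all of $\RR$.

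Third, I apply Guerra's interpolation following Panchenko~\cite{pan-18} to bound $F_n^{\mathrm{soft}}(\gamma)$ by $P_a(\zeta,h,\gamma)$ for every Parisi measure $\zeta\in\fop_D$. The GOE quadratic $\tfrac{\beta\kappa}{2\sqrt{n}}x^\T W_n x$ is interpolated to an auxiliary Gaussian process driven by the RPC cascade $(q_{\bi})_{\bi\in\NN^K}$ of~\eqref{qi}; the derivative in the interpolation parameter, computed by Gaussian integration by parts, has the right sign thanks to the convexity of $\xi$ and to the fact that at the optimum in $\gamma$ the self-overlap is pinned at $D$, so that $\xi(R_{12})-\xi(D)-\xi'(D)(R_{12}-D)\geq 0$. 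To accommodate the non-compactness of the spin support $\RR_+$, Guerra's interpolation is carried out on the truncated space $[0,a]^n$, which is how the supremum over $a\geq 0$ and the inner integration over $[0,a]$ in the definition of $P_a$ emerge; assembling the three steps and taking $\limsup_n$, $\inf_{\zeta,h,\gamma}$, and $\sup_{a,D}$ yields Proposition~\ref{up-bnd}. The main obstacle is the simultaneous control of the three non-compact features (spin support $\RR_+$, hard ball constraint, unbounded GOE quadratic), which I would handle by combining the a.s.\ bound $\lambda_+^{\max}(\Sigma_n)<1-\varepsilon_\Sigma$ with the Gaussian decay of $\mu_\beta$ to make the tails in $x$ negligible uniformly in $n$, so that the $a\to\infty$ limit and the sphere reduction are both harmless in the final supremum.
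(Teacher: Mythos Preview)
Your upfront Legendre linearization of the rank-one term is a clean alternative to the paper's route (which instead keeps $\frac{\beta\alpha}{2n}\ps{x}{1}^2$ inside the interpolated Hamiltonian $V_t$ of~\eqref{Vt} and discards the resulting term $\frac{\beta\alpha}{2}\EE\EGibbs{(\eta_n(x)-h)^2}_t\leq 0$ from $\partial_t\varphi$); both mechanisms work when $\alpha\leq 0$.

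Steps~2--3, however, contain two real gaps. First, your $F_n^{\mathrm{soft}}(\gamma)$ equals $+\infty$ for every $\gamma$ and every finite $n$: since $W_n$ is GOE, there is positive probability that $\lambda_{\max}(\kappa W_n/\sqrt n)$ exceeds any prescribed threshold, and on that event the integral over $\RR_+^n$ diverges; the asymptotic bound of Proposition~\ref{bunin-bd} does not help at fixed $n$, and $F_n^A$ no longer carries the indicator $\1_{\lambda_+^{\max}<1-\varepsilon_\Sigma}$. The paper never removes the bounded domain while the GOE quadratic is still present: Guerra is run on the thin shell $\Delta_\varepsilon(D)$ (so that $|R_{11}-D|<\varepsilon$ holds by construction, not ``at the optimum in $\gamma$''), and $\gamma$ is introduced only at $t=0$, after the GOE has been replaced by linear cavity fields and the domain merely enlarged to $B_+^n(\sqrt nA)$. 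Second, ``Guerra on $[0,a]^n$'' goes the wrong way for an upper bound, since truncating the integration domain lowers the free energy. What Guerra on the shell actually delivers at $t=0$ is $n^{-1}\bX^A_{0,\infty}$, i.e.\ the case $a=\infty$; passing from this to $\sup_a\inf_{\zeta,h,\gamma}\bigl(P_a(\zeta,h,\gamma)-\gamma D-\tfrac{\beta\alpha}{2}h^2\bigr)$ with $\sup_a$ \emph{outside} the infimum is exactly Lemma~\ref{mosco}. There the paper proves convexity of $(\zeta,h,\gamma)\mapsto\bX^A_{0,a}$ via the Auffinger--Chen/Jagannath--Tobasco variational representation of the Parisi PDE, embeds the problem in $\ell^2(\NN)$, and uses Mosco (epigraphic) convergence to exchange the increasing limit $a\to\infty$ with the Fenchel transform. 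This exchange-of-limits is the technical heart of the upper bound and is absent from your outline.
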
 

Thanks to Lemma~\ref{F<F^A}, we focus our analysis on $F^A_n$. Our technique
will be based on the so-called Guerra's interpolation. Let $A \in (0,\infty]$
and $a\in (0,\infty]$.  
Let $\zeta\in\fop_D$ be defined as in~\eqref{fop} with the parameters 
in~\eqref{lq}. Let $h\geq 0$ and $\gamma\in\RR$, and let $(\bz_k)_{k\in[K]}$ 
be $K$ independent $\cN(0,I_n)$ random vectors. Define the random variable 
\[
\bX_{K,a}^A(\bz_1,\ldots, \bz_K) = \log 
 \int_{B_+(A\sqrt{n})\cap[0,a]^n}  
  \exp\left( \ps{x}{\sum_{k=1}^K  \sqrt{\xi'(b_k) - \xi'(b_{k-1})} \bz_k} 
  + \beta\alpha h \ps{1}{x} + \gamma \|x\| ^2 \right) 
   \mu_\beta^{\otimes n} (dx) . 
\] 
Applying Proposition~\ref{P-RPC} with $d = n$ and 
$\cX_K(\bz_1,\ldots,\bz_K) = \bX_{K,a}^A(\bz_1,\ldots, \bz_K)$, the result 
$\cX_0$ of the recursion that we denote as 
$\cX_0 = \bX_{0,a}^A(\zeta,h,\gamma)$ can be written as 
\begin{equation}
\label{X0A-rpc} 
\bX_{0,a}^A(\zeta,h,\gamma) =  \EE \log \sum_{\bi \in \NN^K} 
 v_{\bi} \int_{B_+(A\sqrt{n})\cap[0,a]^n}  
  \exp \left( \ps{x}{\bs q_{\bi}} + \beta\alpha h \ps{1}{x} 
  + \gamma \|x \|^2 \right) 
   \mu_\beta^{\otimes n}(dx) 
\end{equation} 
where the process $(\bq_{\bi})_{\bi\in\NN^K}$ with $\bq_{\bi} = 
 [ \bq_{\bi,l} ]_{\l=1}^N$ is such that the $n$ processes 
$(\bq_{\bi,l})_{\bi\in\NN^K}$ for $l\in[n]$ are $n$ independent copies of 
the process $(q_{\bi})$ in~\eqref{qi}, and the processes $(v_{\bi})$ and
$(\bq_{\bi})$ are independent. 

With this, define 
\[
\bP_a^A(\zeta,h,\gamma) = \frac 1n \bX_{0,a}^A(\zeta,h,\gamma) 
 + \frac 12 \int \theta d\zeta - \frac{\theta(D)}{2} . 
\]
We shall establish the following lemmata. 
\begin{lemma}
\label{guerra}
$\displaystyle{\limsup_n \left\{F^A_n - 
  \sup_{D > 0} \inf_{\zeta\in\fop_D, h\in\RR_+,\gamma\in\RR}  
 \left( \bP_{\infty}^A(\zeta, h,\gamma) - \gamma D 
  - \frac{\beta\alpha}{2} h^2 \right)\right\}\leq 0}$. 
\end{lemma}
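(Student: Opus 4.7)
The plan is to prove Lemma~\ref{guerra} via Guerra's interpolation against a Ruelle probability cascade (see \cite[Ch.~3]{pan-livre13}), adapted to the non-compact state space $\RR_+^n$ and to the rank-one perturbation of $\Sigma_n$. Three preliminary reductions bring $F_n^A$ into a form amenable to the interpolation, and the interpolation itself is performed on each element of a partition of $B_+^n(\sqrt{n}A)$ into thin shells of $\|x\|^2/n$.

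\textbf{Reductions.} Since $\alpha\leq 0$, the identity $\alpha m^2 = \inf_{h\in\RR}(2\alpha h m - \alpha h^2)$, applied with $m = \ps{1}{x}/n \geq 0$, yields
\[
 \frac{\beta\alpha}{2n}\ps{1}{x}^2 \;\leq\; \beta\alpha h\ps{1}{x} - \frac{\beta\alpha n}{2}h^2 \qquad \text{for every } h\in\RR,
\]
with optimum attained at $h = m \geq 0$. This linearizes the rank-one part of $H_n$ at the cost of an additive $-\tfrac{\beta\alpha}{2}h^2$ and restricts the external parameter to $h\geq 0$. Next, for $\delta>0$, partition $B_+^n(\sqrt{n}A)$ into the shells $\cS_j = \{x : \|x\|^2/n\in[D_j, D_j+\delta]\}$ with $D_j = j\delta$ and $j < \lceil A^2/\delta\rceil$; a union bound inside the logarithm costs $O(n^{-1}\log\delta^{-1})$. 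On $\cS_j$, inserting the Lagrange factor $e^{\gamma(\|x\|^2 - nD_j)}$, which is bounded by $e^{|\gamma|n\delta}$ on the shell, generates the offset $-\gamma D_j$ in the free energy.

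\textbf{Guerra interpolation on each shell.} Fix $j$ and $\zeta\in\fop_{D_j}$, so that the maximum of $\support(\zeta)$ matches the shell level. Introduce the interpolating free energy
\[
 \varphi_j(t) = \frac{1}{n}\EE\log\sum_{\bi\in\NN^K} v_{\bi} \int_{\cS_j} \exp\!\Bigl(\sqrt{t}\,\tfrac{\beta\kappa}{2\sqrt{n}} x^{\!\T}Wx + \sqrt{1-t}\,\ps{x}{\bq_{\bi}} + \sqrt{t}\,\ps{1}{\by_{\bi}} + \beta\alpha h\ps{1}{x} + \gamma\|x\|^2\Bigr)\mu_\beta^{\otimes n}(dx),
\]
where $(\by_{\bi})_{\bi\in\NN^K}$ is built from $n$ i.i.d.\ copies of the scalar cavity process~\eqref{yi}, independent of $W$, $(\bq_{\bi})$, and $(v_{\bi})$. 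The tensorization remark after Proposition~\ref{P-RPC}, together with the identity $\EE\log\sum_{\bi} v_{\bi}e^{y_{\bi}} = -\tfrac{1}{2}\int\theta\,d\zeta + \tfrac{1}{2}\theta(D_j)$, gives $\varphi_j(1) = G_n^{A,\cS_j}(h,\gamma) + \tfrac{1}{2}\theta(D_j) - \tfrac{1}{2}\int\theta\,d\zeta$ with $G_n^{A,\cS_j}$ the shell-restricted analog of $G_n^A(h,\gamma)$; and $\varphi_j(0) \leq \bP_\infty^A(\zeta,h,\gamma) + \tfrac{1}{2}\theta(D_j) - \tfrac{1}{2}\int\theta\,d\zeta$ after enlarging the $x$-domain in $\bX_{0,\infty}^{A,\cS_j}$ from $\cS_j$ back to $B_+^n(\sqrt{n}A)$. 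Gaussian integration by parts against $W$ and the RPC Gaussians of~\eqref{qi}--\eqref{yi}, followed by the ultrametric invariance of $(v_{\bi})$, yields
\[
 \varphi_j'(t) = \tfrac{1}{2}\EE\bigl\langle [\xi(R_{11}) - \xi(D_j) - (R_{11}-D_j)\xi'(D_j)] - [\xi(R_{12}) - \xi(q_{12}) - (R_{12}-q_{12})\xi'(q_{12})]\bigr\rangle_t,
\]
with $q_{12} = b_{\bi^1\wedge\bi^2}$ and $\langle\cdot\rangle_t$ the two-replica Gibbs bracket at time $t$ on $\cS_j$. The off-diagonal (second) term is nonnegative by convexity of $\xi(r) = \tfrac{\beta^2\kappa^2}{2}r^2$ and contributes $\leq 0$; the diagonal (first) term is also nonnegative but is bounded by $C\delta^2$ on $\cS_j$ since $R_{11} \in [D_j, D_j+\delta]$ and $\xi$ is smooth. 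Integrating $\varphi_j'(t) \leq C\delta^2/2$ on $[0,1]$ then yields $G_n^{A,\cS_j}(h,\gamma) \leq \bP_\infty^A(\zeta, h, \gamma) + C\delta^2/2$.

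\textbf{Conclusion and main obstacle.} Combining the reductions with the shell-wise Guerra bound,
\[
 F_n^A \leq \max_j \inf_{\zeta\in\fop_{D_j},\, h\geq 0,\, \gamma\in\RR}\bigl\{\bP_\infty^A(\zeta,h,\gamma) - \gamma D_j - \tfrac{\beta\alpha}{2}h^2\bigr\} + C\delta^2 + |\gamma_\star|\delta + O(n^{-1}\log\delta^{-1}),
\]
where $\gamma_\star$ denotes the optimizer. Restricting the inner infimum to $|\gamma|\leq \Gamma$, letting $n\to\infty$ then $\delta\to 0$ then $\Gamma\to\infty$, and bounding $\max_j$ by $\sup_{D>0}$, yields Lemma~\ref{guerra}. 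The main obstacle is the non-compactness of the state space: in contrast with classical Ising or compact-spin settings, $\sum_{\bi} v_{\bi}e^{\ps{x}{\bq_{\bi}}}$ is not a priori integrable in $x$, so the Gaussian integration by parts does not come for free. The truncation to $B_+^n(\sqrt{n}A)$ from Lemma~\ref{F<F^A}, combined with the Lagrange penalty $\gamma\|x\|^2$, restores integrability, while the shell restriction is what makes the diagonal contribution to $\varphi_j'$ negligible and activates the convexity of $\xi$ on the off-diagonal.
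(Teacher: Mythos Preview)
Your argument is correct and follows the same strategy as the paper: partition $B_+^n(\sqrt{n}A)$ into thin shells in $\|x\|^2/n$, run Guerra's interpolation against a RPC on each shell, and introduce the Lagrange multiplier $\gamma$ to decouple the shell constraint. The one genuine difference is in the treatment of the rank-one term $\tfrac{\beta\alpha}{2n}\ps{x}{1}^2$: you linearize it \emph{before} the interpolation via the pointwise concavity inequality $\alpha m^2 \leq 2\alpha hm - \alpha h^2$, whereas the paper keeps it inside the interpolating Hamiltonian as $t\,\tfrac{\beta\alpha}{2n}\ps{x}{1}^2 + (1-t)\,\beta\alpha h\ps{1}{x}$, so that $\partial_t\varphi$ picks up the extra nonpositive term $\tfrac{\beta\alpha}{2}\EE\EGibbs{(\ps{x}{1}/n - h)^2}_t$. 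Your route is a bit cleaner in that the derivative then contains only the two $\xi$-terms.

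One small gap in your write-up: because you linearize with a \emph{global} $h$ and only afterwards partition into shells, your chain of inequalities literally produces $\inf_{h\geq 0}\max_j\inf_{\zeta,\gamma}\{\cdots\}$, not the $\max_j\inf_{\zeta,h,\gamma}\{\cdots\}$ you state in the conclusion; the former is in general larger. The fix is immediate: since your linearization inequality holds pointwise in $x$, apply it \emph{after} the shell partition with a shell-dependent $h_j$, and the claimed bound follows. This is exactly what the paper achieves by working directly on $F_n^{\Delta_\varepsilon(D)}$ with the interpolated rank-one term, where $h$ is a free parameter per shell from the outset.
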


\begin{lemma}
\label{mosco} 
For each $D > 0$, it holds that 
\[
 \lim_{a\to\infty} \inf_{\zeta\in\fop_D, h\in\RR_+,\gamma\in\RR} 
 \left( \bP_{a}^A(\zeta, h,\gamma) - \gamma D - \frac{\beta\alpha}{2} h^2 
  \right)  
= \inf_{\zeta\in\fop_D, h\in\RR_+,\gamma\in\RR}  
 \left( \bP_{\infty}^A(\zeta, h,\gamma) - \gamma D 
  - \frac{\beta\alpha}{2} h^2 \right) . 
\] 
\end{lemma}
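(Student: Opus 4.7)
The plan is to combine a monotonicity argument with a simple geometric observation that makes the limit trivially attained, so no actual Mosco or $\Gamma$-convergence machinery is required. The first step is to check that $\bP_a^A(\zeta,h,\gamma)$ is non-decreasing in $a$ for every fixed $(\zeta,h,\gamma)$: the integrand in $\bX_{K,a}^A(\bz_1,\ldots,\bz_K)$ is non-negative and the integration domain $B_+^n(\sqrt{n}A)\cap[0,a]^n$ grows with $a$, so $\bX_{K,a}^A$ is pointwise non-decreasing in $a$. This monotonicity propagates through each step of the recursion $\cX_k=\lambda_k^{-1}\log\EE_{\bz_{k+1}}\exp(\lambda_k\cX_{k+1})$, since $x\mapsto e^{\lambda_k x}$, conditional expectation, and $\log$ are all monotone; hence $\bX_{0,a}^A$, and so $\bP_a^A$, is non-decreasing in $a$. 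Because the correction $-\gamma D-\frac{\beta\alpha}{2}h^2$ does not depend on $a$, passing to the infimum over $(\zeta,h,\gamma)\in\fop_D\times\RR_+\times\RR$ yields at once the easy direction $\lim_{a\to\infty}\inf_{\zeta,h,\gamma}(\bP_a^A-\gamma D-\frac{\beta\alpha}{2}h^2)\leq\inf_{\zeta,h,\gamma}(\bP_\infty^A-\gamma D-\frac{\beta\alpha}{2}h^2)$.

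For the reverse inequality I would exploit the elementary geometric fact that every $x\in B_+^n(\sqrt{n}A)$ satisfies $0\leq x_i\leq\|x\|\leq A\sqrt{n}$ for each coordinate, so $B_+^n(\sqrt{n}A)\subseteq[0,A\sqrt{n}]^n$ and therefore $B_+^n(\sqrt{n}A)\cap[0,a]^n=B_+^n(\sqrt{n}A)$ as soon as $a\geq A\sqrt{n}$. Pointwise in $(\bz_1,\ldots,\bz_K)$, this yields $\bX_{K,a}^A=\bX_{K,\infty}^A$ for all such $a$; propagating this pointwise identity through the $K$-step RPC recursion (which is legitimate because the Gaussian family $(\bz_{\bj})$ does not depend on $a$) gives $\bX_{0,a}^A=\bX_{0,\infty}^A$, and therefore $\bP_a^A(\zeta,h,\gamma)=\bP_\infty^A(\zeta,h,\gamma)$ for \emph{every} $(\zeta,h,\gamma)$. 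Consequently the two infima in the statement coincide whenever $a\geq A\sqrt{n}$, so the limit as $a\to\infty$ is attained in an entirely trivial way.

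Since $n$ is fixed throughout the lemma and $A\sqrt{n}$ is merely a finite threshold, there is no real analytic obstacle: the statement is ultimately a structural one about the interaction between the Euclidean ball $B_+^n(\sqrt{n}A)$ and the cube $[0,a]^n$. The only point that deserves written care is the propagation of the pointwise identity and of the monotonicity through the $K$ nested steps of the RPC recursion, which is automatic from the $a$-independence of $(\bz_{\bj})$ and the fact that each recursion step is built from monotone operations (exponential with non-negative exponent, conditional expectation, logarithm).
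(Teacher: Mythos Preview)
Your argument is correct and considerably more elementary than the paper's. The geometric observation that $B_+^n(\sqrt{n}A)\subseteq[0,A\sqrt{n}]^n$, hence $B_+^n(\sqrt{n}A)\cap[0,a]^n=B_+^n(\sqrt{n}A)$ whenever $a\geq A\sqrt{n}$, gives $\bX_{K,a}^A=\bX_{K,\infty}^A$ pointwise in the Gaussian disorder, and this identity propagates unchanged through the $K$ steps of the recursion to yield $\bP_a^A=\bP_\infty^A$ for all such $a$. Since $n$ and $A$ are fixed in the lemma, this makes the statement trivial. Your monotonicity remark is correct too, though unnecessary once the identity is in hand.

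The paper proceeds very differently: it establishes that $(\zeta,h,\gamma)\mapsto\bX_{0,a}^A(\zeta,h,\gamma)$ is convex via a multi-dimensional Parisi PDE and the Auffinger--Chen/Jagannath--Tobasco variational representation, then invokes the Lipschitz estimate of Lemma~\ref{Xcont} to extend continuously from $\fop_D$ to $\cP([0,D])$, embeds the whole parameter space into $\ell^2(\NN)$ through the moment map, recasts the infimum as a Fenchel--Legendre transform, and finally appeals to Mosco (epigraphical) convergence of the monotone family $(\mathfrak Q_a^A)_a$ to swap the limit in $a$ with the infimum. This machinery is robust and would be needed in a setting where the domain of integration is not eventually stationary in $a$---for instance if the ball constraint $B_+^n(\sqrt{n}A)$ were absent and one integrated directly over $[0,a]^n$ against $\mu_\beta^{\otimes n}$---but for the lemma as stated, with the ball truncation already in place, your one-line argument renders the convexity, the Lipschitz lemma, the $\ell^2$ embedding, and the Mosco theory all superfluous.
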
 
With the help of this lemma, we obtain that 
\[
\limsup_n F^A_n \leq 
  \sup_{a>0, D > 0} \inf_{\zeta\in\fop_D, h\in\RR_+,\gamma\in\RR}  
 \left( \bP_{a}^A(\zeta, h,\gamma) - \gamma D - \frac{\beta\alpha}{2} h^2 
  \right)  .
\]
Notice that $\bP_{a}^A(\zeta, h,\gamma)$ is increasing in $A$ and 
converges as $A\to\infty$ to 
$\bP_{a}^\infty(\zeta, h,\gamma) = P_a(\zeta, h,\gamma)$, since 
$\bX_{K,a}^\infty$ is a sum as in the explanation that follows  
Proposition~\ref{P-RPC}. We therefore get 
\begin{equation}
\label{lisupFA}  
\limsup_n F^A_n \leq 
  \sup_{a>0, D > 0} \inf_{\zeta\in\fop_D, h\in\RR_+,\gamma\in\RR}  
 \left( P_a(\zeta, h,\gamma) - \gamma D - \frac{\beta\alpha}{2} h^2 \right)  .
\end{equation} 
Using Lemma~\ref{F<F^A}, we obtain Proposition~\ref{up-bnd}.

\begin{proof}[Proof of Lemma~\ref{guerra}] 
Let $\zeta\in\fop_D$ be defined as in~\eqref{fop} with the parameters 
in~\eqref{lq}. The basic object in the proof of this lemma is the interpolated 
Hamiltonian $V_t : \RR_+^n \times \NN^K \to \RR$ defined for $t\in[0,1]$ as
\begin{equation}
\label{Vt} 
V_t(x,\bi) = \sqrt{t} \frac{\beta\kappa}{2\sqrt{n}} x^\T W x    
 + t \frac{\beta\alpha}{2} \frac{\ps{x}{1}^2}{n} 
 + \sqrt{1-t} \ps{x}{\bs q_{\bi}} 
 + \sqrt{t} \ps{1}{\bs y_{\bi}} 
  + (1-t) \beta\alpha h \ps{1}{x} 
\end{equation} 
along with a RPC $(v_{\bi})_{\bi \in\NN^K} \sim \RPC_{\bs\lambda}$. 
Here, the $\RR^n$--valued random vectors 
$\bs q_{\bi} = \begin{bmatrix} q_{\bi,l} \end{bmatrix}_{l=1}^n$ and 
$\bs y_{\bi} = \begin{bmatrix} y_{\bi,l} \end{bmatrix}_{l=1}^n$ are 
constructed as follows: the $n$ random processes 
$(q_{\bi,l})_{\bi\in\NN^K}$ for $l\in[n]$ are $n$ independent copies of the the 
process $(q_{\bi})_{\bi\in\NN^K}$ in~\eqref{qi}, the $n$ random processes 
$(y_{\bi,l})_{\bi\in\NN^K}$ are $n$ independent copies of the the 
process $(y_{\bi})_{\bi\in\NN^K}$ in~\eqref{yi}, and 
$(\bs q_{\bi})$, $(\bs y_{\bi})$, $(v_{\bi})$ and the matrix $W$ are 
independent.  

The standard way of establishing Lemma \ref{guerra} with the help of 
the function $V_t(x, \bi)$ and the RPC $(v_{\bi})$ is to set 
\[
\varphi(t) = \frac 1n \EE \log \sum_{\bi \in \NN^K} 
 v_{\bi} \int_{B_+^n(\sqrt{n} A)} 
  \exp V_t(x,\bi)  \ \mu_\beta^{\otimes n}(dx) , 
\]
to relate $\varphi(0)$ with the Parisi function and $\varphi(1)$ with the free
energy $F^A$, and to control $\partial_t \varphi(t)$ with the help of the
well-known Gaussian Integration by Parts (IP) formula detailed in, \emph{e.g.},
\cite[Lemma 1.1]{pan-livre13} or \cite[\S1.3]{tal-livre11-t1}.  In the
classical contexts of the SK model or the so-called spherical model
\cite{tal-(sph)-06}, it holds by construction that the overlap $R_{11}$ is
equal to $1$ since the replicas live on the sphere with radius $\sqrt{n}$. In
this case, controlling $\partial_t \varphi(t)$ with the help of the Gaussian IP
formula is a simple matter of book keeping. This property of $R_{11}$ is
however not satisfied in our model, and this creates a difficulty which can be
circumvented by constaining the replicas to lie on thin spherical shells.  This
idea was implemented in \cite{pan-05}, building on the proof developed in
\cite{tal-06}.  The paper~\cite{pan-18} that we follow here also exploits this
idea. 

For $D \in (0, A^2)$ and $\varepsilon > 0$ small, denote as 
$\Delta_\varepsilon(D) \subset \RR_+^n$ the set 
\[
\Delta_\varepsilon(D)= \left\{ x \in \RR_+^n \ : \ 
   D-\varepsilon < \|x\|^2 / n < D+\varepsilon \right\} 
\]
Associate with this set the free energy 
\[
F_n^{\Delta_\varepsilon(D)}  = \frac 1n \EE\log 
 \int_{\Delta_\varepsilon(D)} \exp\left(H_n(x)\right) \, 
   \mu_\beta^{\otimes n}(dx) . 
\]
Consider the Gibbs probability measure on the space 
$\Delta_\varepsilon(D) \times \NN^K$ defined as 
\[
\Gamma^{\Delta_\varepsilon(D)}_t(dx, \bi) \ \sim \ 
 v_{\bi} \exp V_t(x,\bi) \ \mu_\beta^{\otimes n}(dx) , 
\]
and let $\EGibbs{\cdot}_t$ be the expectation 
w.r.t.~$(\Gamma^{\Delta_\varepsilon(D)}_t)^{\otimes\infty}$. Define the 
function 
\[
\varphi^{\Delta_\varepsilon(D)}(t) = \frac 1n \EE \log \sum_{\bi \in \NN^K} 
 v_{\bi} \int_{\Delta_\varepsilon(D)} 
  \exp V_t(x,\bi) \ \mu_\beta^{\otimes n}(dx).  
\]
We have 
\begin{align*} 
\varphi^{\Delta_\varepsilon(D)}(1) &= F_n^{\Delta_\varepsilon(D)} + 
 \frac 1n \EE \log \sum_{\bi \in \NN^K} 
    v_{\bi} \exp ( \ps{1}{\bs y_{\bi}} )  
 = F_n^{\Delta_\varepsilon(D)} + \frac 12 
   \sum_{k=0}^{K-1} \lambda_k (\theta(b_{k+1}) - \theta(b_{k}))  \\ 
 &= F_n^{\Delta_\varepsilon(D)} 
 - \frac 12 \int \theta d\zeta + \frac{\theta(D)}{2} 
\end{align*} 
by the development that follows Proposition~\ref{P-RPC}. We also have 
\[
\varphi^{\Delta_\varepsilon(D)}(0) = 
 \frac 1n \EE \log \sum_{\bi \in \NN^K} 
 v_{\bi} \int_{\Delta_\varepsilon(D)} 
  \exp \left( \ps{x}{\bs q_{\bi}} + \beta\alpha h \ps{1}{x} \right) 
   \mu_\beta^{\otimes n}(dx)  
\]
On the set $\Delta_\varepsilon(D)$, it holds that 
\[
\forall\gamma \in \RR, \quad \gamma(nD - \| x \|^2) \leq 
   n |\gamma| \varepsilon, 
\]
therefore, for an arbitrary $\gamma\in\RR$, it holds that
\begin{align*} 
\varphi^{\Delta_\varepsilon(D)}(0) &\leq 
 \frac 1n \EE \log \sum_{\bi \in \NN^K} 
 v_{\bi} \int_{\Delta_\varepsilon(D)} 
  \exp \left( \ps{x}{\bs q_{\bi}} + \beta\alpha h \ps{1}{x} 
  + n |\gamma| \varepsilon - \gamma n D + \gamma \|x \|^2 \right) 
   \mu_\beta^{\otimes n}(dx)  \\
&\leq - \gamma D + |\gamma| \varepsilon + 
 \frac 1n \EE \log \sum_{\bi \in \NN^K} 
 v_{\bi} \int_{B_+(A\sqrt{n})} 
  \exp \left( \ps{x}{\bs q_{\bi}} + \beta\alpha h \ps{1}{x} 
   + \gamma \|x \|^2 \right) \mu_\beta^{\otimes n}(dx)  \\
&= - \gamma D + \frac 1n \bX_{0,\infty}^A(\zeta,h,\gamma) + |\gamma| \varepsilon . 
\end{align*}

To establish Guerra's bound, we compute the derivative 
$\partial_t \varphi^{\Delta_\varepsilon(D)}(t)$ by applying the Gaussian IP 
formula. We first observe that 
\begin{align*} 
\partial_t \varphi^{\Delta_\varepsilon(D)}(t) &= 
 \frac 1n \EE \EGibbs{\partial_t V_t(x,\bi)}_t  \\ 
 &= \frac 1n \EE \EGibbs{
 \frac{1}{2\sqrt{t}} \frac{\beta\kappa}{2\sqrt{n}} x^\T W x 
 - \frac{1}{2\sqrt{1-t}} \ps{x}{\bs q_{\bi}} 
 + \frac{1}{2\sqrt{t}} \ps{1}{\bs y_{\bi}} }_t 
+ \frac{\beta\alpha}{2}  
  \EE\EGibbs{\frac{\ps{x}{1}^2}{n^2} - 2 h \frac{\ps{1}{x}}{n} }_t . 
\end{align*} 
We apply the Gaussian IP formula to compute the first term at the right hand 
side of the last display. To that end, we need to compute 
\begin{align*}
U( (x,\bi), (y, \bj)) &= \EE\left[
\left( 
 \frac{1}{2\sqrt{t}} \frac{\beta\kappa}{2\sqrt{n}} x^\T W x 
 - \frac{1}{2\sqrt{1-t}} \ps{x}{\bs q_{\bi}} 
 + \frac{1}{2\sqrt{t}} \ps{1}{\bs y_{\bi}} 
\right) \times \right. \\
& 
  \quad \quad \quad \quad \quad \quad \quad \quad 
  \quad \quad \quad \quad \quad \quad \quad \quad 
\left. \left( 
  \sqrt{t} \frac{\beta\kappa}{2\sqrt{n}} y^\T W y 
 + \sqrt{1-t} \ps{y}{\bs q_{\bj}} 
 + \sqrt{t} \ps{1}{\bs y_{\bj}} 
\right) \right]  \\
&= \frac n2 \left( \xi\left( \ps{x}{y}/n \right) - 
 \left( \ps{x}{y}/n \right) \xi'(b_{\bi \wedge \bj}) 
  + \theta(b_{\bi \wedge \bj}) \right) , 
\end{align*}
where, writing $\bi = (i_1,\ldots i_K)$ and $\bj = (j_1,\ldots, j_K)$, we 
set $\bi \wedge \bj = \max \{ l  :  (i_1,\ldots,i_l) = (j_1,\ldots, j_l) \}$.
Remembering that $b_K = D$ since our Parisi measure belongs to $\fop_D$, we 
then have 
\begin{align}
\partial_t \varphi^{\Delta_\varepsilon(D)}(t) &= 
\frac 12 \EE\EGibbs{\xi(R_{11}) -  R_{11} \xi'(D) + \theta(D)}_t 
- \frac 12 \EE\EGibbs{\xi(R_{12}) -  R_{12} \xi'(b_{\bi \wedge \bj}) 
  + \theta(b_{\bi \wedge \bj})}_t  \nonumber \\
&\phantom{=} 
+ \frac{\beta\alpha}{2}  \EE\EGibbs{\left( \ps{x}{1}/n - h \right)^2}_t  
 - \frac{\beta\alpha}{2} h^2  \nonumber 
\end{align} 
For our function $\xi$, it holds that 
$\xi(x) - x \xi'(y) + \xi(y) = 0.5 \beta^2 \kappa^2 (x - y)^2$. Therefore, 
remembering that $|R_{11} - D| < \varepsilon$ and that $\alpha\leq 0$, we 
obtain that 
\begin{align}
\label{phi'} 
\partial_t \varphi^{\Delta_\varepsilon(D)}(t) &= 
\frac{\beta^2\kappa^2}{4} \EE\EGibbs{(R_{11} - D)^2}_t 
- \frac{\beta^2\kappa^2}{4} 
  \EE\EGibbs{(R_{12} - b_{\bi \wedge \bj})^2}_t 
+ \frac{\beta\alpha}{2}  \EE\EGibbs{\left( \ps{x}{1}/n - h \right)^2}_t  
- \frac{\beta\alpha}{2} h^2 \\
&\leq \frac{\beta^2\kappa^2}{4} \varepsilon^2 - \frac{\beta\alpha}{2} h^2. 
\nonumber 
\end{align} 
We therefore have that 
$\varphi^{\Delta_\varepsilon(D)}(1) \leq \varphi^{\Delta_\varepsilon(D)}(0)
 + \frac{\beta^2\kappa^2}{4} \varepsilon^2 - \frac{\beta\alpha}{2} h^2$, 
which implies that 
\begin{align*}
F_n^{\Delta_\varepsilon(D)} 
 &\leq -\gamma D + \frac 1n \bX_{0,\infty}^A(\zeta,h,\gamma) 
  - \frac{\beta\alpha}{2} h^2 
 + \frac 12 \int \theta d\zeta - \frac{\theta(D)}{2} 
+ \frac{\beta^2\kappa^2}{4} \varepsilon^2 + |\gamma|\varepsilon  \\ 
 &= \bP_{\infty}^A(\zeta,h,\gamma) - \gamma D - \frac{\beta\alpha}{2} h^2 
+ \frac{\beta^2\kappa^2}{4} \varepsilon^2 + |\gamma|\varepsilon 
\end{align*} 
for arbitrary $\zeta\in\fop_D$, $h\geq 0$ and $\gamma$. 
By the argument of \cite[Lemma~3]{pan-18}, this implies Lemma~\ref{guerra}. 

\end{proof}

We now turn to the proof of Lemma~\ref{mosco}. Define the distance $\bd$ on 
$\cP([0,D])$ as 
\[
\bd(\zeta,\nu) = \int_0^D \left| \zeta([0,t]) - \nu([0,t]) \right| dt. 
\]
The following result will be needed. 
\begin{lemma}
\label{Xcont} 
Given $\zeta, \tilde\zeta \in \fop_D$, it holds that 
\[
\frac 1n \left| \bX_{0,a}^A(\zeta,h,\gamma) 
  - \bX_{0,a}^A(\tilde\zeta,h,\gamma) \right| 
  \leq \beta^2\kappa^2 A^2 \bd(\zeta,\tilde\zeta) . 
\]
\end{lemma}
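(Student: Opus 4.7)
The plan is to establish this Lipschitz-type continuity by a Guerra--Talagrand interpolation between $\zeta$ and $\tilde\zeta$ in $\fop_D$, exploiting the overlap bound $|R_{12}| \leq A^2$ enforced by the integration domain $B_+(A\sqrt{n}) \cap [0,a]^n$. By continuity of $\bX^A_{0,a}$ in the finite parameters of a Parisi measure (with the strict monotonicity constraint on the $\lambda_k$'s recovered by density in the end), I first reduce to the case where $\zeta$ and $\tilde\zeta$ share a common support $\{b_0, \ldots, b_K = D\}$, with respective weight vectors $\bs\lambda, \tilde{\bs\lambda}$; in this setup a direct computation gives $\bd(\zeta,\tilde\zeta) = \sum_{k=0}^{K-1} |\lambda_k - \tilde\lambda_k|(b_{k+1} - b_k)$, which is the quantity the bound must recover.

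Next, I introduce the linear path $\lambda_k(t) = (1-t)\lambda_k + t \tilde\lambda_k$ for $t \in [0,1]$, let $\zeta_t$ be the corresponding Parisi measure on the (fixed) common support, and set $\Phi(t) = n^{-1} \bX^A_{0,a}(\zeta_t, h, \gamma)$. Since the $b_k$'s do not move with $t$, the Gaussian process $(\bs q_{\bi})$ is unchanged along the path and only the RPC weights $v_{\bi}(t) \sim \RPC_{\bs\lambda(t)}$ vary. It then suffices to prove $|\Phi'(t)| \leq \beta^2\kappa^2 A^2 \sum_k |\lambda_k - \tilde\lambda_k|(b_{k+1} - b_k)$ uniformly in $t$.

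Writing $\Phi'(t) = \sum_k (\tilde\lambda_k - \lambda_k) \partial_{\lambda_k}\Phi(t)$ and computing $\partial_{\lambda_k}\Phi$ via the recursive formula $X_k = \lambda_k^{-1} \log \EE_{\bz_{k+1}} e^{\lambda_k X_{k+1}}$ together with a Gaussian IP exactly analogous to the one leading to~\eqref{phi'} in the proof of Lemma~\ref{guerra}, one obtains an expression of $\partial_{\lambda_k}\Phi(t)$ as a two-replica Gibbs covariance at level $k$. Using the identity $\xi(x) - x\xi'(y) + \xi(y) = (\beta^2\kappa^2/2)(x-y)^2$ together with the invariance of the RPC under level-$k$ reweighting (cf.~\cite[Chapter~2]{pan-livre13}), this is recast as the increment $\xi'(b_{k+1}) - \xi'(b_k) = \beta^2\kappa^2(b_{k+1}-b_k)$ paired with an overlap-type quantity bounded in absolute value by $A^2$, yielding $|\partial_{\lambda_k}\Phi(t)| \leq \beta^2\kappa^2 A^2 (b_{k+1} - b_k)$. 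Integrating over $t \in [0,1]$ then produces $|\Phi(1) - \Phi(0)| \leq \beta^2\kappa^2 A^2 \bd(\zeta,\tilde\zeta)$, as required.

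The main obstacle is the derivative step: a crude application of the identity from Lemma~\ref{guerra} produces a bound involving $(R_{12} - b_k)^2$ whose naive estimate is of order $(A^2 + D)^2$, giving a Lipschitz constant of order $(A^2+D)^2$ rather than the required $A^2$. The improvement to a first-order expression in $R_{12}$, with no $D$-dependence, relies on the log-Laplace structure of the $\lambda_k$-recursion and on Panchenko's RPC reweighting identities, which turn the second-order cavity quantity into a first-order overlap increment paired with $\xi'(b_{k+1}) - \xi'(b_k)$. Carrying this cancellation carefully through the hierarchy is the technical crux of the proof.
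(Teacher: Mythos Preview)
Your interpolation runs orthogonally to the paper's. You fix the common support $\{b_0,\ldots,b_K\}$ and move the jump heights $\lambda_k(t)=(1-t)\lambda_k+t\tilde\lambda_k$; the paper instead reduces to a \emph{common} jump-height sequence $(\lambda_k)$ and interpolates the support points $b_k(s)=sb_k+(1-s)\tilde b_k$. Because the $b_k$'s enter only through the Gaussian variances in $\bq_{\bi}$, the paper's derivative $\partial_{b_k}\bX_{0,a}^A$ is a single Gaussian integration by parts yielding $\tfrac{\xi''(b_k)}{2}\,\EE\EGibbs{\ps{\bx^1}{\bx^2}\1_{\bi^1\wedge\bi^2=k}}$, bounded at once by $\tfrac{nA^2\xi''}{2}(\lambda_k-\lambda_{k-1})$ via $|\ps{\bx^1}{\bx^2}|\leq nA^2$ and the RPC identity $\EE\EGibbs{\1_{\bi^1\wedge\bi^2=k}}=\lambda_k-\lambda_{k-1}$. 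The formula $\sum_k(\lambda_k-\lambda_{k-1})|b_k-\tilde b_k|=\bd(\zeta,\tilde\zeta)$ then finishes the proof.

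In your direction there is a real gap at the derivative step. The parameter $\lambda_k$ sits in the RPC weights, not in the Gaussian process, so a Gaussian IP ``analogous to~\eqref{phi'}'' does not apply; differentiating the recursion gives $\partial_{\lambda_k}X_k=\lambda_k^{-2}\,H(\nu_k\,\|\,\PP_{\bz_{k+1}})$, a relative entropy that is not an overlap expression and to which the identity $\xi(x)-x\xi'(y)+\xi(y)=\tfrac{\beta^2\kappa^2}{2}(x-y)^2$ is irrelevant. Your appeal to ``RPC reweighting'' to convert a second-order quantity into a first-order overlap increment paired with $\xi'(b_{k+1})-\xi'(b_k)$ is not a recognisable mechanism here. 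The bound you want \emph{is} available, but through the Parisi PDE representation introduced just after Lemma~\ref{Xcont}: perturbing $\zeta([0,t])$ on $[b_k,b_{k+1})$ gives $\partial_{\lambda_k}\bX_{0,a}^A=\int_{b_k}^{b_{k+1}}\tfrac{\xi''(t)}{2}\,\EE\|\nabla\bPhi(t,Z_t)\|^2\,dt$, and since each $\nabla_v\bPhi$ is a (nested) Gibbs average of $x$ with $\|x\|\leq A\sqrt{n}$ one obtains $|\partial_{\lambda_k}\Phi|\leq\tfrac{\beta^2\kappa^2A^2}{2}(b_{k+1}-b_k)$ directly. So your interpolation direction is viable, but you should either switch to the $b_k$-interpolation as the paper does, or replace the sketched cavity/reweighting argument by the PDE gradient bound.
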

The analogue of this lemma in the context of the SK model is a well known
result that dates back to Guerra \cite{gue-01}, \cite[Theorem~1]{gue-03}.  One can
also consult \cite[Lemma~6.2 and Proposition~6.3]{dom-mou-livre24} for the proof
technique. The adaptations of these results to our context are minor; we only
outline the main steps of the proof in Appendix~\ref{prf-Xcont}. 

\begin{proof}[Proof of Lemma~\ref{mosco}] 
To prove this proposition, our first step is to show that 
$\bX_{0,a}^A(\zeta,h,\gamma)$ is a convex function. To this end, we start by
characterizing $\bX_{0,a}^A$ through a multi-dimensional Parisi PDE (Partial 
Differential Equation). Let $\zeta\in\fop_D$. Defining the function 
$\bg_a^A : \RR^n\times\RR\times\RR \to \RR$ as 
\[
  \bg_a^A(v,h,\gamma) = \log \int_{B_+(A\sqrt{n})\cap[0,a]^n}  
  \exp \left( \ps{x}{v} + \beta\alpha h \ps{1}{x} 
  + \gamma \|x \|^2 \right) \mu_\beta^{\otimes n}(dx) , 
\]
our PDE reads 
\begin{align*} 
&\partial_t \bPhi(t,v) + \frac{\xi''(t)}{2} \left( \Delta\bPhi(t,v) + 
 \zeta([0,t]) \| \nabla\bPhi(t,v) \|^2 \right) = 0, 
   \quad (t,v) \in (0,D) \times \RR^n,  \\
&\bPhi(D,v) = \bg_a^A(v,h,\gamma) 
\end{align*} 
where $\nabla\bPhi(t,v)$ and $\Delta\bPhi(t,v)$ are respectively the gradient
of $\bPhi(t,v)$ and the Laplacian of $\bPhi(t,v)$ with respect to $v$. 
Getting back to the recursive construction starting from $\bX_{K,a}^A$ and 
leading to $\bX_{0,a}^A$, and using the so-called Cole-Hopf transformation,
we know that 
\[
\bX_{0,a}^A = \bPhi(0,0). 
\]
see, \emph{e.g.}, \cite[Chapter 6]{dom-mou-livre24} for a more detailed 
treatment of this PDE. 

In~\cite{auf-chen-15} and~\cite{jag-tob-16}, the PDE solution is given 
a variational form, which can be verified without difficulty in our case. 
Define as $\cA$ the class of $\RR^n$--valued bounded random processes which 
are progressively measurable on $[0,D]$ with respect to a multi-dimensional 
$\RR^n$--valued Brownian Motion $B_t$ on $[0,D]$.  Then, it holds that 
\begin{equation}
\label{prog} 
 \bX_{0,a}^A(\zeta,h,\gamma) = \sup_{f\in\cA} \EE\left[ 
   -\frac 12 \int_0^1 \xi''(t) \zeta([0,t]) \| f_t \|^2 dt 
    + \bg_a^A(Z_D^f,h,\gamma) \right], 
\end{equation} 
where the $\RR^n$--valued random process $(Z_t^f)_{t\in[0,D]}$ solves the SDE 
\[
dZ_t^f = \xi''(t) \zeta([0,t]) f_t dt + \sqrt{\xi''(t)} dB_t. 
\]
Let us quickly check that $\bg_a^A(v,h,\gamma)$ is convex on $\RR^n
\times\RR\times\RR$.  Write $u = (v,h,\gamma) \in \RR^n \times\RR\times\RR$.
For a given $u$, let $\EGibbs{\cdot}$ be the expectation operator for the
probability measure $G$ supported by $B_+(A\sqrt{n})\cap[0,a]^n$, and that
satisfies $G(dx) \sim \exp \left( \ps{x}{v} + \beta\alpha h \ps{1}{x} + \gamma
\|x \|^2 \right) \mu_\beta^{\otimes n}(dx)$, and let $r(x) = [x^\T ,
\beta\alpha\ps{1}{x}, \| x \|^2]^\T \in \RR^n \times\RR\times\RR$.  Then, we
see that $\nabla \bg_a^A(u) = \EGibbs{r(x)}$ and $\nabla^2 \bg_a^A(u) =
\EGibbs{r(x) r(x)^\T} - \EGibbs{r(x)} \EGibbs{r(x)}^\T \geq 0$, which shows
that $\bg_a^A$ is convex.  With this at hand, the convexity of
$\bX_{0,a}^A(\zeta,h,\gamma)$ is obtained through a straightforward adaptation
of the beginning of the proof of \cite[Theorem~20]{jag-tob-16}, relying on the
variational representation~\eqref{prog} of $\bX_{0,a}^A(\zeta,h,\gamma)$. 

From Lemma~\ref{Xcont} in conjunction with the Dominated Convergence Theorem,
we obtain that the function $(\zeta,h,\gamma) \mapsto
\bX_{0,a}^A(\zeta,h,\gamma)$ can be continuously extended from $\fop_D \times
\RR_+\times\RR$ to $\cP([0,D]) \times \RR_+\times\RR$.  Furthermore, by the
previous result, this extension is convex on $\cP([0,D]) \times
\RR_+\times\RR$. We still denote as $\bX_{0,a}^A$ this extension. Similarly, we
still denote as $\bP_a^A$ the extension of $\bP_a^A$ to $\cP([0,D]) \times
\RR_+\times\RR$, and we write $\bQ_a^A(\zeta,h,\gamma) =
\bP_a^A(\zeta,h,\gamma) - \beta\alpha h^2 / 2$.  The function $\bQ_a^A$ is
convex on $\cP([0,D]) \times \RR_+\times\RR$ since the term $\int \theta
d\zeta$ is linear and the term $- \beta\alpha h^2 / 2$ is convex for
$\alpha\leq 0$. It is moreover continuous. 

Given $\zeta\in\cP([0,D])$, define the sequence 
$\bm^\zeta = \left( \int_0^D x^k \zeta(dx) / k! \right)_{k\geq 1}$ which 
belongs to $\ell^2(\NN)$. Define the function 
$S : \cP([0,D])\times\RR_+\times\RR \to \ell^2(\NN)$ as 
\[
S((\zeta,h,\gamma)) = \left( h, \gamma, \bm^\zeta\right) 
\]
where the right hand side is meant to be the $\ell^2(\NN)$ sequence obtained 
by preceding the sequence $\bm^\zeta$ with $(h,\gamma)$. The function $S$ 
is an injection from $\cP([0,D])\times\RR_+\times\RR$ to the set 
$\cD = S(\cP([0,D])\times\RR_+\times\RR)$ since each element of $\cP([0,D])$ is
determined by its moments. Define the function $\mathfrak Q_a^A$ as  
\[
\begin{array}{lcccl} 
\mathfrak Q_a^A &:& \ell^2(\NN) & \to & \RR\cup\{\infty\} \\
            & &  f & \mapsto & 
  \left\{\begin{array}{cl}  \bQ_a^A(S^{-1}(f)) &\text{if} \ f\in \cD \\ 
                        \infty &\text{otherwise.} 
         \end{array}\right.  
\end{array} 
\]
This function is proper. Moreover, it is convex since its domain $\cD$ is 
convex, $S^{-1}(uf_1 + (1-u)f_2) = u S^{-1}(f_1) + (1-u) S^{-1}(f_2)$ for 
each $u\in[0,1]$, and $\bQ_a^A$ is convex. 

The convergence in $\ell^2(\NN)$ of the elements of the set $\cD_\cP = \{
\bm^\zeta \ : \ \zeta\in \cP([0,D]) \}$ is equivalent to the finite dimensional
convergence. Furthermore, since $\cP([0,D])$ is a compact space and since the
narrow convergence in this space is equivalent to the convergence of the
moments, the set $\cD_\cP$ is a compact. This implies that $\cD$ is a closed
subset of $\ell^2(\NN)$. Given $c\in\RR$, let $\lev_{\leq c} \mathfrak
Q_a^A$ be the $c$--level set of $\mathfrak Q_a^A$, assumed non-empty, and
let $(f_k)$ be a sequence in $\lev_{\leq c} \mathfrak Q_a^A$ that converges
to $f\in\ell^2(\NN)$. Since $\cD$ is closed, $f\in\cD$. By the continuity of
$\bQ_a^A$, $f\in \lev_{\leq c} \mathfrak Q_a^A$. Thus, $\mathfrak Q_a^A$ is a
lower semicontinuous function on $\ell^2(\NN)$ for each $a\in(0,\infty]$.

Write an element $f\in\ell^2(\NN)$ as $f = (h,\gamma,(m_k)_{k\geq 1})$. 
By the continuity of $\bQ_{a}^A$, we have 
\begin{align*} 
- \inf_{\zeta\in\fop_D, h\in\RR_+,\gamma\in\RR} 
 \bQ_a^A(\zeta, h,\gamma) - \gamma D &= 
 \sup_{\zeta\in\cP([0,D]), h\in\RR_+,\gamma\in\RR} 
 \gamma D - \bQ_a^A(\zeta, h,\gamma) \\ 
&= \sup_{(h,\gamma,(m_k)) \in \ell^2(\NN)} \gamma D 
   - \mathfrak Q_a^A((h,\gamma,(m_k))) \\ 
 &= (\mathfrak Q_a^A)^* ((0,D, (0,0,\ldots))) , 
\end{align*} 
where $(\mathfrak Q_a^A)^*$ is the Fenchel-Legendre transform of $\mathfrak
Q_a^A$.  The family of proper, convex, and lower semicontinuous functions
$(\mathfrak Q_a^A)_{a\in(0,\infty]}$ is increasing with $a$ and converges in
the pointwise sense to $\mathfrak Q_\infty^A$. Therefore, this convergence
takes place in the so-called Mosco (or epigraphic) sense, see
\cite{att-livre84} for a detailed account on this convergence.  In this
situation, it is well-known that $(\mathfrak Q_a^A)^* \to_a (\mathfrak
Q_\infty^A)^*$ in the Mosco sense \cite{mos-71}, or equivalently, since
$(\mathfrak Q_a^A)^*$ is decreasing in $a$, in a pointwise sense. Consequently, 
\begin{align*} 
\lim_{a\to\infty} \inf_{\zeta\in\fop_D, h\in\RR_+,\gamma\in\RR} 
 \bQ_a^A(\zeta, h,\gamma) - \gamma D 
 &= - (\mathfrak Q_\infty^A)^* ((0,D, (0,0,\ldots))) \\ 
 &= \inf_{\zeta\in\cP([0,D]), h\in\RR_+,\gamma\in\RR} 
 \bQ_\infty^A(\zeta, h,\gamma) - \gamma D  \\ 
 &= \inf_{\zeta\in\fop_D, h\in\RR_+,\gamma\in\RR}  
 \bP_\infty^A(\zeta, h,\gamma) - \gamma D - \beta\alpha h^2 / 2 , 
\end{align*} 
and Lemma~\ref{mosco} is proven. 
\end{proof} 

\section{Proof of Theorem~\ref{F->P}: lower bound} 
\label{sec:lbnd} 

The purpose of this section is to establish the following proposition:
\begin{proposition}
\label{low-bnd} 
It holds that 
\[
\liminf_n \widetilde F_n \geq 
  \sup_{D > 0, a > 0} \inf_{\zeta\in\fop_D, h\geq 0,\gamma\in\RR} 
 \left( P_a(\zeta, h,\gamma) - \gamma D - \frac{\beta\alpha}{2} h^2 \right) .
\]
\end{proposition}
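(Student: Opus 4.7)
The overall strategy follows Panchenko~\cite{pan-18}, combining the Aizenman-Sims-Starr (ASS) cavity scheme with Ghirlanda-Guerra (GG) identities and Panchenko's ultrametricity theorem, adapted to the truncated support $[0,a]^n$ and to track the magnetization $m(x) = \ps{1}{x}/n$ that enters through the rank-one deformation $\alpha\,11^\T/n$. By Lemma~\ref{F>Fa} it suffices to show that for every fixed $a > 0$,
\[
\liminf_n F_{a,n} \geq \sup_{D > 0} \inf_{\zeta\in\fop_D, h\geq 0, \gamma\in\RR} \Bigl( P_a(\zeta,h,\gamma) - \gamma D - \tfrac{\beta\alpha}{2}h^2 \Bigr).
\]
Fix $a > 0$ and $D > 0$. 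The plan is to restrict the partition function to the thin spherical shell $\Delta_\varepsilon(D)\cap[0,a]^n$ (using the set $\Delta_\varepsilon(D)$ defined in Section~\ref{sec:ubnd}), to lower bound the shell-restricted free energy by a Parisi-type functional in the large-$n$ limit, and to conclude by noting $F_{a,n}$ dominates any such shell restriction and taking the supremum over $D$.

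The first step is to add to $H_n$ a small Gaussian perturbation of mixed $p$-spin type, together with a linear perturbation conjugate to the magnetization $\ps{1}{x}$. Following Panchenko's perturbation scheme \cite[Chapter~3]{pan-livre13}, the coefficients are drawn randomly from a bounded set, with the effect that the perturbation contributes $o(1)$ to the free energy but, along a subsequence, guarantees that the GG identities hold in the limit for the joint array $(R_{ij}, m_i)_{i,j}$, where $m_i = \ps{1}{x^i}/n$ is the magnetization of replica $i$. Restricting to the shell forces $R_{ii}\approx D$ uniformly, and the GG identity for $m_i$ forces the magnetization to concentrate at a deterministic value $h^\star \geq 0$ in the limit.

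The second step is the ASS cavity bound on the shell-restricted free energy, combined with a Gaussian interpolation separating the new spin $x_{n+1}$ from the $n$-spin system. The GOE part of $H_n$ produces a cavity field on $x_{n+1}$ with covariance $\xi'(R_{ij})$, while the deformation part $\tfrac{\beta\alpha}{2n}\ps{1}{x}^2$ contributes a cavity term $\beta\alpha\, m(x) x_{n+1} + o(1)$, which converges to $\beta\alpha h^\star x_{n+1}$ by concentration of $m$. The residual on-shell quadratic $\tfrac{\beta\alpha n}{2} m(x)^2$ equals $\tfrac{\beta\alpha n}{2}(h^\star)^2 + O(\varepsilon)$ uniformly on the shell, so it factors out as a deterministic constant producing the $-\tfrac{\beta\alpha}{2}h^2$ term in the limit. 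Under GG, Panchenko's ultrametricity theorem \cite[Chapter~4]{pan-livre13} identifies the limiting overlap law with one generated by a Ruelle probability cascade associated to some $\zeta^\star \in \fop_D$ with maximum-of-support equal to $D$. Substituting into the cavity expression via the RPC representation of Proposition~\ref{P-RPC}, and rewriting the shell constraint $\|x\|^2/n \approx D$ via a Lagrange multiplier $\gamma \in \RR$ as a penalty $-\gamma D$ on the bulk (as in the proof of Lemma~\ref{guerra}), the limiting lower bound reads $P_a(\zeta^\star, h^\star, \gamma) - \gamma D - \tfrac{\beta\alpha}{2}(h^\star)^2$, which is in turn $\geq \inf_{\zeta,h,\gamma}(\cdot)$, as required.

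The main obstacle is the treatment of the quadratic magnetization term $\tfrac{\beta\alpha}{2n}\ps{1}{x}^2$: a direct Hubbard-Stratonovich linearization produces an inequality whose direction depends on $\sign(\alpha)$, preventing a uniform reduction to a linear magnetization model valid for both signs. The fix described above replaces the quadratic by its concentration value on the shell and thus bypasses the difficulty at the cost of introducing the shell restriction and an $O(\varepsilon)$ error, which is absorbed by letting $\varepsilon \to 0$ after $n \to \infty$. A secondary technical challenge is the joint synchronization of $(m_i)$ with the ultrametric overlap tree, requiring a dedicated GG identity for the linear observable $\ps{1}{x}$; this is achieved through the accompanying linear perturbation in step one, and mirrors the multi-species extensions of Panchenko's framework in the spin glass literature.
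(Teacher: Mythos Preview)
Your proposal follows essentially the same route as the paper: reduce via Lemma~\ref{F>Fa}, restrict to the shell $\Delta_\varepsilon(D)\cap[0,a]^n$, add a Gaussian $p$-spin perturbation together with a linear term in $\ps{1}{x}$ to force both the GG identities and concentration of the empirical mean, run the ASS scheme, identify the limiting structure with an RPC via ultrametricity, and recover the Lagrange multiplier $\gamma$ at the cavity level. This is exactly the architecture of Section~\ref{sec:lbnd}, including its main novelty, the extra perturbation $n^\delta s\,\eta_n(x)$ used to make the magnetization self-average (Lemma~\ref{GGas}).

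Two places where your write-up is looser than the paper and would need to be tightened. First, on a thin shell the self-overlap $R_{11}$ is only approximately $D$, which is not enough to run the GG argument directly; the paper follows \cite{pan-18} and applies the perturbation to the rescaled configuration $\tilde x = \sqrt{nD/\|x\|^2}\,x$ (see~\eqref{xtx}), obtaining GG identities for the overlaps $\widetilde R_{ij}$ and then paying an $O(\varepsilon^{1/4})$ cost to pass from $Q(x),Y(x)$ to $Q(\tilde x),Y(\tilde x)$ in the cavity functional. Second, your sentence ``the residual on-shell quadratic $\tfrac{\beta\alpha n}{2}m(x)^2$ equals $\tfrac{\beta\alpha n}{2}(h^\star)^2+O(\varepsilon)$ uniformly on the shell'' is not right as written: the shell constrains $\|x\|^2/n$, not $m(x)$, and even under concentration the difference is $O(n)\cdot|m-h^\star|$, not $O(\varepsilon)$. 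In the paper the $-\tfrac{\beta\alpha}{2}h^2$ term does not come from factoring out a bulk constant but from the ASS increment: the quadratic $\tfrac{\beta\alpha}{2(n+m)}\ps{1}{v}^2$ for $v=(u,x)$ cancels against its $n$-counterpart up to the size-$m$ residual $m\tfrac{\beta\alpha}{2}\eta_n(x)^2$ appearing in~\eqref{chimn}, and it is this residual, divided by $m$, that converges to $\tfrac{\beta\alpha}{2}(h^m)^2$ once $\eta_n$ concentrates. With these two corrections your sketch lines up with the paper's proof.
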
 

To prove this proposition, we shall lower bound $\liminf_n F_{a,n}$ for $a > 0$
and use Lemma~\ref{F>Fa}.  For $D \in (0, a^2)$ and $\varepsilon > 0$ small, we
still define the set $\Delta^\varepsilon_n(D) \subset \RR_+^n$ as above, and we
circumvent the domain of integration for $F_{a,n}$, working with
$F_{a,n}^{\Delta^\varepsilon_n(D)}$ defined as 
\[
F_{a,n}^{\Delta^\varepsilon_n(D)} = 
  \frac 1n \EE\log \cZ_{a,n}^{\Delta^\varepsilon_n(D)} \quad \text{with}\quad 
 \cZ_{a,n}^{\Delta^\varepsilon_n(D)} = 
  \int_{\Delta^\varepsilon_n(D) \cap [0,a]^n} 
  e^{H_n(x)} \mu_\beta^{\otimes n}(dx) . 
\] 

The proof relies on the so-called Aizenman-Sims-Starr (ASS) scheme, which goes
as follows in our context.  The equations that we will write right away will
just serve to set the stage; we shall need to modify them afterwards. 

Fixing an integer $m > 0$, we partition a vector $v \in \RR_+^{n+m}$ as 
$v = ( u, x )$ with $u \in \RR_+^m$ and $x \in \RR_+^n$. Since 
\[
\Delta^\varepsilon_{n+m}(D) \supset \left\{
 v = (u,x) \in \RR_+^n \, : \, u \in \Delta^\varepsilon_{m}(D),  
 x \in \Delta^\varepsilon_{n}(D)  \right\}, 
\]
we can write 
\begin{align}
\liminf_n F_{a,n}^{\Delta^\varepsilon_n(D)} &\geq 
 \liminf_n \frac 1m \left( 
 \EE\log \cZ_{a,n+m}^{\Delta^\varepsilon_{n+m}(D)} 
 - \EE\log \cZ_{a,n}^{\Delta^\varepsilon_n(D)} \right)  \nonumber   \\
&\geq 
 \liminf_n \frac 1m \left( 
 \EE\log \int_{\Delta^\varepsilon_m(D) \cap [0,a]^m} \mu_\beta^{\otimes m}(du)  
  \int_{\Delta^\varepsilon_n(D) \cap [0,a]^n}  \mu_\beta^{\otimes n}(dx)  
  \ e^{H_{n+m}(u,x)} \right. \nonumber\\
&\phantom{=} \quad\quad\quad\quad\quad \left. 
 - \EE\log \int_{\Delta^\varepsilon_n(D) \cap [0,a]^n} 
  e^{H_{n}(x)}  \mu_\beta^{\otimes n}(dx)  \right) \nonumber \\
&\eqdef \liminf_n \chi_{m,n} . 
\label{F->chi} 
\end{align} 
Adapting a standard derivation to our case, see \cite[\S 1.3]{pan-livre13},
with the small particularity that we need now to consider the term 
$(\beta\alpha/2) \ps{v}{1}^2 / (n+m)$ in the expression of $H_{n+m}(v)$, we 
obtain 
\begin{align} 
\chi_{m,n} &= \bs\chi_{m,n} + o_n(1), \quad \text{with} \nonumber \\
\bs\chi_{m,n} &= \frac 1m  
\EE \log \EGibbs{\int_{\Delta^\varepsilon_m(D) \cap [0,a]^m}  
 \exp\left( 
 \ps{u}{Q(x)} + \beta\alpha \ps{u}{1_m} \frac{\ps{x}{1_n}}{n}  
 \right) \mu_\beta^{\otimes m}(du)}' \nonumber  \\
 &\phantom{=} 
  - \frac 1m \EE \log \EGibbs{ \exp\left( \sqrt{m} Y(x) 
    + m \frac{\beta\alpha}{2} \frac{\ps{x}{1_n}^2}{n^2} \right) }', 
\label{chimn} 
\end{align} 
with the following notations: the expectation $\EGibbs{\cdot}'$ is taken 
with respect to $(G_n')^{\otimes\infty}$, where 
$G_n'(dx) \in \cP(\Delta^\varepsilon_n(D) \cap [0,a]^n)$ is the random 
probability measure defined as 
\begin{equation*}
G'_n(dx) \sim \exp\left( H'_n(x) \right) \mu_\beta^{\otimes n}(dx) ,  
\end{equation*} 
with $H'_n(x)$ being the Hamiltonian 
\[
H'_n(x) = \frac{\beta\kappa}{2\sqrt{n+m}} x^\T W_n x + 
         \frac{\beta\alpha}{2} \frac{\ps{x}{1_n}^2}{n+m} , 
\] 
and $Q : \RR_+^n \to \RR^m$ and $Y : \RR_+^n \to \RR$ are two Gaussian centered
processes, independent of $W_n$, and which probability distributions are 
defined through the matrix and scalar covariances 
\begin{align*}
\EE Q(x^1) Q(x^2)^\T &= \beta^2\kappa^2 R_{12} I_m = \xi'(R_{12}) I_m,  
    \ \text{and} \\
\EE Y(x^1) Y(x^2) &= \frac{\beta^2\kappa^2}{2} R_{12}^2 = \theta(R_{12}), 
\end{align*} 
with $R_{12} = \ps{x^1}{x^2} / n$. 

By adapting the proof of \cite[Theorem~1.3]{pan-livre13}, see also \cite[Page
878]{pan-18}, we obtain that for each fixed $m > 0$, $\bs\chi_{m,n}$ is a
continuous functional of the distribution of the couple
$\left((R_{i,j})_{i,j\geq 1}, \left( \ps{x^k}{1} / n\right)_{k\geq 1} \right)$
of the infinite array of overlaps $(R_{i,j})_{i,j\geq 1}$ and the infinite
vector $( \ps{x^k}{1} / n)_{k\geq 1}$ under the distribution $\EE
(G_n')^{\otimes\infty}$. Here the continuity is with the respect to the
topology of the narrow convergence of the finite dimensional distributions of
$\left((R_{i,j})_{i,j\geq 1}, ( \ps{x^k}{1} / n)_{k\geq 1} \right)$. 

Leaving aside the vector $( \ps{x^k}{1} / n)_{k\geq 1}$ for a moment, the
principle of the proof for the limit inferior of the free energy stands as
follows. It is usually required that the distribution of the array of
overlaps $(R_{i,j})_{i,j\geq 1}$ satisfies the celebrated Ghirlanda-Guerra (GG)
identities in the large--$n$ limit, so that in this limit, these overlaps can
be seen as issued from replicas sampled from a Gibbs measure on a Hilbert space
described in terms of a Ruelle probability cascade.  Applying one of the
important ideas in spin glass theory, the GG identities can be obtained by
properly perturbing the Hamiltonian of the measure $G'_n$ without much
affecting the free energy \cite[Chapter~12]{tal-livre11-t2},
\cite[Chapter~3]{pan-livre13}. In our context, this should be complemented with
another idea, dating back to \cite{pan-18}: since our replicas live in a
thickening $\Delta^\varepsilon_n(D)$ of a sphere, and not exactly on this
sphere, a transformation of these replicas is needed before applying the
perturbation on the Hamiltonian in order to obtain the GG identities for large
$n$. It will be the array of overlaps of the transformed replicas, denoted as 
$(\widetilde R_{ij})$ below, that will satisfy the GG identities in the 
large--$n$ limit. 

In our specific model, we also need to manage the vector of empirical means $(
\ps{x^k}{1} / n)_{k\geq 1}$, requiring these empirical means to concentrate in
the large--$n$ limit. To that end, we shall add a supplementary perturbation to
the Hamiltonian of $G'_n$. This is the main specificity of our proof as 
regards the lower bound on $F_{a,n}$. 

To implement these ideas, we now resume our argument from the beginning by
perturbing our Hamiltonian $H_n(x)$. Keeping our $\varepsilon > 0$, let
$D_\varepsilon = D \1_{D\geq\sqrt{\varepsilon}}$. For $x \in
\Delta_n^\varepsilon(D)$, define the function $x\mapsto\tx$ as 
\begin{equation}
\label{xtx} 
\tx = \left\{\begin{array}{cl} 
 \displaystyle{\sqrt{\frac{D}{\| x\|^2 /n}}} x 
         &\text{if } D\geq\sqrt{\varepsilon}  \\ 
  0 &\text{if not.} 
 \end{array}
\right. , 
\end{equation} 
in such a way that $\tx$ lives on the sphere of radius $\sqrt{D_\varepsilon}$.
For a given $n > 0$, let $(g_{n,j})_{j\geq 1}$ is a sequence of scalar 
independent centered Gaussian processes on $\RR_+^n$ such that 
$\EE g_{n,j}(x^1) g_{n,j}(x^2) = (\ps{x^1}{x^2}/n)^j$,
see \cite[\S 3.2]{pan-livre13} for the construction of such processes. Writing 
\[
\eta_n(x) = \frac{\ps{x}{1}}{n}, 
\]
our perturbed version $\Hpert_n$ of the Hamiltonian $H_n$ will take the form 
\[
\Hpert_n(x) = 
 H_n(x) + n^\varrho \sum_{j\geq 1} (2a)^{-j} w_j g_{n,j}(\tx) 
   + n^\delta s \eta_n(x), 
\]
where $\varrho, \delta > 0$, the elements of the sequence $(w_j)_{j\geq 1}$
take their values in the interval $[0,3]$, and $s \in[0,3]$.  
Let 
\begin{align*} 
G_{a,n}^{\text{pert},\Delta^\varepsilon_n(D)}(dx) &= 
  \frac{e^{\Hpert_n(x)}}{\cZ_{a,n}^{\text{pert},\Delta^\varepsilon_n(D)}} 
    \mu_\beta^{\otimes n}(dx)  
 \in \cP(\Delta^\varepsilon_n(D) \cap [0,a]^n), \quad \text{with} \\ 
 \cZ_{a,n}^{\text{pert},\Delta^\varepsilon_n(D)} &= 
  \int_{\Delta^\varepsilon_n(D) \cap [0,a]^n} 
  e^{\Hpert_n(x)} \mu_\beta^{\otimes n}(dx) 
\end{align*} 
be the Gibbs measure constructed from this Hamiltonian, and let 
$\EGibbs{\cdot}$ be the mean with respect to 
$(G_{a,n}^{\text{pert},\Delta^\varepsilon_n(D)})^{\otimes\infty}$. 
We also define the overlaps $\widetilde R_{ij} = \ps{\tx^i}{\tx^j}/n$, where
$x^i$ and $x^j$ are two replicas under 
$(G_{a,n}^{\text{pert},\Delta^\varepsilon_n(D)})^{\otimes\infty}$, and where 
$\tx^i$ and $\tx^j$ are the transformations of $x^i$ and $x^j$ by \eqref{xtx}
respectively. 

\begin{lemma} 
\label{GGas} 
Assume that $\varrho \in (1/4, 1/2)$ and $\delta \in (1/2,1)$. Then, the free 
energy $F_{a,n}^{\text{pert},\Delta^\varepsilon_n(D)} = n^{-1} \EE\log 
\cZ_{a,n}^{\text{pert},\Delta^\varepsilon_n(D)}$ satisfies 
\begin{equation} 
\label{Fpert} 
F_{a,n}^{\Delta^\varepsilon_n(D)} - 
F_{a,n}^{\text{pert},\Delta^\varepsilon_n(D)} \xrightarrow[n\to\infty]{} 0. 
\end{equation} 
Assume now that $(s, w_1, w_2, \ldots)$ is a sequence of i.i.d.~random
variables distributed uniformly on the interval $[1,2]$ and independent of
$W_n$, and denote as $\EE_{s,w}$ the expectation with respect to this sequence.
For each integers $k\geq 2$ and $p \geq 1$ and each bounded measurable
function $f = f((\widetilde R_{ij})_{1\leq i,j\leq k})$ of the overlaps
$(\widetilde R_{ij})_{1\leq i,j\leq k}$, it holds that 
\[
\EE_{s,w} \left| 
 \EE\EGibbs{f \widetilde R^p_{1,k+2}} 
  - \frac 1k \EE\EGibbs{f} \EE\EGibbs{\widetilde R^p_{12}} 
  - \frac 1k \sum_{i=2}^k \EE\EGibbs{f \widetilde R^p_{1,i}} 
\right| \xrightarrow[n\to\infty]{} 0. 
\]
Finally, writing $\eta = \ps{x}{1}/n$ with $x$ being distributed as 
$G_{a,n}^{\text{pert},\Delta^\varepsilon_n(D)}$, it holds that  
\[
\EE_{s,w} \EE\EGibbs{\left| \eta - \EE\EGibbs{\eta} \right|} 
       \xrightarrow[n\to\infty]{} 0. 
\]
\end{lemma}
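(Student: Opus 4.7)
My plan is to establish the three assertions in turn, following the Gaussian perturbation machinery of \cite[Chapter~3]{pan-livre13} combined with the transformation $x \mapsto \tx$ introduced in \cite{pan-18} for replicas living in the thickened shell $\Delta_n^\varepsilon(D)$, and supplemented with a linear perturbation tailored to the empirical mean. Throughout, write $\Hpert_n = H_n + \Xi_n + L_n$ with Gaussian part $\Xi_n(x) = n^\varrho \sum_j (2a)^{-j} w_j g_{n,j}(\tx)$ and linear part $L_n(x) = n^\delta s \eta_n(x)$. A crucial feature of the transformation is that $\|\tx\|^2 / n = D$ on the integration domain (when $D \geq \sqrt{\varepsilon}$), so that $\Xi_n(x)$ has the deterministic variance $n^{2\varrho} \sum_j (2a)^{-2j} w_j^2 D^j$, which is uniformly bounded by $C n^{2\varrho}$ since the geometric ratio $D/(2a)^2 \leq 1/4$ (because $D \leq a^2$).

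To prove \eqref{Fpert}, write the difference $F_{a,n}^{\text{pert},\Delta_n^\varepsilon(D)} - F_{a,n}^{\Delta_n^\varepsilon(D)}$ as $n^{-1}\EE\log$ of the unperturbed Gibbs expectation of $e^{\Xi_n + L_n}$. Applying Jensen's inequality to $\log$ and to the Gaussian Laplace transform, together with the trivial bound $L_n \in [0, 3an^\delta]$, yields the two-sided estimate $0 \leq F^{\text{pert}}_{a,n} - F^{\Delta_n^\varepsilon(D)}_{a,n} \leq C n^{2\varrho - 1} + 3a n^{\delta - 1}$, which vanishes since $\varrho < 1/2$ and $\delta < 1$.

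For the Ghirlanda--Guerra identities, the design of $\Xi_n$ gives the separating covariance $\sum_j (2a)^{-2j} w_j^2 \widetilde R_{12}^j$ in powers of $\widetilde R_{12} \in [0, D]$. Gaussian integration by parts in $g_{n,j}$ yields $\partial_{w_j} \EE \psi_n = n^{2\varrho-1}(2a)^{-2j} w_j\, \EE[D^j - \EGibbs{\widetilde R_{12}^j}]$, where $\psi_n = n^{-1}\log \cZ^{\text{pert}}$ is convex in $w_j$ and concentrates in $(W_n, g)$ at rate $O(n^{-1/2})$ by Gaussian concentration (the Hamiltonian is Lipschitz in $W_n$ of Frobenius constant $O(\sqrt n)$ since $\|x\|^2 \leq na^2$ on the integration domain). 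The standard convex-sandwich estimate $(\psi_n(w_j) - \psi_n(w_j - h))/h \leq \partial_{w_j}\psi_n \leq (\psi_n(w_j + h) - \psi_n(w_j))/h$, integrated in $w_j \in [1, 2]$ and combined with $\sup\EE \partial_{w_j} \psi_n = O(n^{2\varrho - 1})$, gives after AM--GM optimization $h^\ast = O(n^{1/4 - \varrho})$ (admissible for $\varrho > 1/4$) the bound $\EE_w \EE|\partial_{w_j}\psi_n - \EE \partial_{w_j}\psi_n| = O(n^{\varrho - 3/4}) \to 0$. A second Gaussian IBP converts this into $\EE_w \EE|\EGibbs{\widetilde R_{12}^p} - \EE\EGibbs{\widetilde R_{12}^p}| \to 0$ for each $p\geq 1$, from which the three-term GG identities follow by the cavity computation of \cite[\S 3.2]{pan-livre13}, carried out verbatim with $\widetilde R_{ij}$ in place of the standard spin overlaps.

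The concentration of $\eta$ is parallel but more direct. The convexity $\psi_n''(s) = n^{2\delta-1}(\EGibbs{\eta^2} - \EGibbs{\eta}^2) \geq 0$ combined with $\partial_s \log \cZ^{\text{pert}} \in [0, an^\delta]$ gives by direct integration $\int_1^2 (\EGibbs{\eta^2} - \EGibbs{\eta}^2)\,ds \leq a n^{-\delta}$, whence $\EE_{s,w}\EE\EGibbs{|\eta - \EGibbs{\eta}|} = O(n^{-\delta/2})$ by Cauchy--Schwarz. To upgrade from $\EGibbs{\eta}$ to the deterministic target $\EE\EGibbs{\eta}$, the same convex-sandwich argument as above with $\sup\EE\partial_s\psi_n = O(n^{\delta - 1})$ and optimal $h^\ast = O(n^{1/4 - \delta/2})$ (admissible for $\delta > 1/2$) gives $\EE_s \EE|\partial_s\psi_n - \EE\partial_s\psi_n| = O(n^{\delta/2 - 3/4})$, and multiplying by $n^{1-\delta}$ yields $\EE_s \EE|\EGibbs{\eta} - \EE\EGibbs{\eta}| = O(n^{1/4 - \delta/2}) \to 0$. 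The main difficulty in both the GG and the $\eta$ parts is the precise tuning of the exponents $\varrho \in (1/4, 1/2)$ and $\delta \in (1/2, 1)$: each must be large enough for the convex-sandwich bound to yield a vanishing rate, yet small enough to absorb the perturbation into the $o(1)$ error in the free energy. The linear perturbation $L_n$ has no analogue in the standard spin-glass literature and is specific to this model, where $\eta_n$ plays the role of the free parameter $h$ in the Parisi functional of Theorem~\ref{F->P}.
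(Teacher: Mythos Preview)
Your proposal is correct and follows essentially the same approach as the paper: both use Jensen's inequality for \eqref{Fpert}, defer the Ghirlanda--Guerra identities to the standard perturbation machinery of \cite[Theorem~3.2]{pan-livre13} adapted to the rescaled overlaps $\widetilde R_{ij}$, and handle the concentration of $\eta$ via the same convexity-in-$s$ plus Griffith-lemma argument with the identical choice $\epsilon = n^{1/4 - \delta/2}$. One minor imprecision: your phrase ``a second Gaussian IBP converts this into $\EE_w\EE|\EGibbs{\widetilde R_{12}^p} - \EE\EGibbs{\widetilde R_{12}^p}| \to 0$'' misdescribes the intermediate step---the actual route (as in \cite[\S 3.2]{pan-livre13}) combines the $\varphi''$-integral bound on $\EE\EGibbs{(g_j - \EGibbs{g_j})^2}$ with your convex-sandwich estimate to control $\EE|\EGibbs{f g_j(\tx^{k+1})} - \EGibbs{f}\EE\EGibbs{g_j}|$, \emph{then} applies Gaussian IBP to extract the GG identity in $\widetilde R^p$---but this does not affect the validity of the argument you are invoking.
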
 
\begin{proof} 
Writing $g(\tx) = \sum_{j\geq 1} (2a)^{-j} w_j g_{n,j}(\tx)$, we have by 
Jensen's inequality 
\[
\EE \log \frac{\int \exp(H + n^\varrho g) d\mu_\beta^{\otimes n}}
     {\int \exp(H) d\mu_\beta^{\otimes n}}  
\geq \EE \frac{\int n^\varrho g \exp(H) d\mu_\beta^{\otimes n}}
   {\int \exp(H) d\mu_\beta^{\otimes n}} = 0, 
\]
therefore, 
\[
F_{a,n}^{\Delta^\varepsilon_n(D)} 
 = \frac 1n \EE\log \cZ_{a,n}^{\Delta^\varepsilon_n(D)}  
 \leq \frac 1n \EE\log \int\exp(H + n^\varrho g) d\mu_\beta^{\otimes n}
\leq \frac 1n \EE\log \int \exp(H + n^\varrho g + n^\delta s \eta) 
      d\mu_\beta^{\otimes n}
= F_{a,n}^{\text{pert},\Delta^\varepsilon_n(D)}. 
\]
By Jensen's inequality involving this time the expectation with respect to the
law of the process $g$, we also have 
\begin{align*} 
F_{a,n}^{\text{pert},\Delta^\varepsilon_n(D)} &\leq 
3 a n^{\delta - 1} + 
 \frac 1n \EE\log \int \exp(H + n^\varrho g) d\mu_\beta^{\otimes n}
\leq 3 a n^{\delta - 1} + 
 \frac 1n \EE\log \int \exp(H) \exp(n^{2\varrho} \EE g^2 / 2) 
      d\mu_\beta^{\otimes n}  \\
&\leq 3 a n^{\delta - 1} + 1.5 n^{2\varrho - 1}  +  
  F_{a,n}^{\Delta^\varepsilon_n(D)} ,  
\end{align*} 
hence the convergence~\eqref{Fpert}. 

The second convergence result is obtained by a straightforward adaptation of
the proof of \cite[Theorem~3.2]{pan-livre13} towards dealing with the overlaps
$\widetilde R_{ij}$. Here, the replacement of $x$ with $\tx$ in the expression
of $g(\tx)$ plays an important role. 

To establish the last convergence, we actually follow the same canvas as 
for the proof of \cite[Theorem~3.2]{pan-livre13}. Recall the presence of the 
term $n^\delta s \eta_n$ in the expression of $\Hpert_n$ above, and define  
the function 
$s\mapsto\varphi(s) = \log \cZ_{a,n}^{\text{pert},\Delta^\varepsilon_n(D)}$. 
It is clear that 
$\varphi'(s) = n^\delta \EGibbs{\eta}$ and 
$\varphi''(s) = n^{2\delta} \left( \EGibbs{\eta^2} - \EGibbs{\eta}^2\right) 
 \geq 0$. With this at hand, we have 
\[
n^{2\delta} \int_1^2 \EE\EGibbs{ (\eta - \EGibbs{\eta})^2} ds = 
\EE\int_0^1 \varphi''(s) ds = \EE \varphi'(2) - \EE \varphi'(1) 
 \leq \EE \varphi'(2) \leq n^\delta a. 
\]
It therefore holds that 
\[
\int_1^2 \EE\EGibbs{ |\eta - \EGibbs{\eta}|} ds 
   \leq \frac{\sqrt{a}}{n^{\delta/2}} . 
\]
We now bound 
\[
\EE| \EGibbs{\eta} - \EE\EGibbs{\eta} | = 
  \frac{\EE|\varphi'(s) - \EE\varphi'(s)|}{n^\delta}.  
\]
Here, we shall need the qantitative version of the so-called Griffith lemma, 
given by~\cite[Lemma~3.2]{pan-livre13}. Namely, if 
$f,\bs f:\RR\to\RR$ are two convex differentiable functions, then,
for any $\epsilon > 0$, it holds that
\[
|f'(s) - \bs f'(s) | \leq 
 \bs f'(s+\epsilon) - \bs f'(s-\epsilon) + 
  \frac{|f(s+\epsilon) - \bs f(s+\epsilon)| 
   + |f(s-\epsilon) - \bs f(s-\epsilon)| + |f(s) - \bs f(s)|}{\epsilon}. 
\]
We shall use this result with $f = \varphi$ and $\bs f = \EE\varphi$. 
Observing that $\EE (H(x) + n^\varrho g(\tx))^2\leq n R_{11} + 3 n^{2\varrho}$, 
we obtain by Gaussian concentration \cite[Theorem~1.2]{pan-livre13} that 
\[
\sup \{ \EE|\varphi(v) - \EE\varphi(v)| \ : \ s, w_1, w_2, \ldots \in [0,3] \} 
 \leq C \sqrt{n}. 
\]
Therefore, since $\varphi$ is $a n^\delta$--Lipschitz, we have 
\begin{align*} 
 \int_1^2 \EE| \EGibbs{\eta} - \EE\EGibbs{\eta} | ds &= 
 n^{-\delta} \int_1^2 \EE|\varphi'(s) - \EE\varphi'(s)| ds \\
 &\leq n^{-\delta} \int_1^2 \EE\varphi'(s+\epsilon) ds 
 - \int_1^2 \EE\varphi'(s-\epsilon) ds 
  + C \frac{\sqrt{n}}{\epsilon n^\delta}  \\
 &= \frac{\EE\varphi(2+\epsilon) - \EE\varphi(2-\epsilon) 
 - (\EE\varphi(1+\epsilon) - \EE\varphi(1-\epsilon))}{n^\delta}  
  + C \frac{\sqrt{n}}{\epsilon n^\delta} \\
&\leq 2 a \epsilon + C \frac{\sqrt{n}}{\epsilon n^\delta}. 
\end{align*} 
Taking $\epsilon = n^{1/4 - \delta/2}$, we obtain our last convergence. 
\end{proof}

We now follow the approach of \cite{pan-18} by pointing out the specificity of
our model related with the presence of the empirical means of the replicas.
Take $\varrho \in (1/4, 1/2)$ and $\delta \in (1/2,1)$. Then, the
convergence~\eqref{Fpert} holds true by the previous lemma.  Recall the 
expression of $H'_n(x)$, let $G_{a,n}^{\text{pert}',\Delta^\varepsilon_n(D)}$ 
be the Gibbs measure constructed from the Hamiltonian 
\[
H^{\text{pert}'}_n(x) =  
 H_n'(x) + n^\varrho \sum_{j\geq 1} (2a)^{-j} w_j g_{n,j}(\tx) 
   + n^\delta s \eta_n(x), 
\]
and denote as $\EGibbs{\cdot}'$ the expectation with respect to 
$(G_{a,n}^{\text{pert}',\Delta^\varepsilon_n(D)})^{\otimes\infty}$.  
Define the overlaps $\widetilde R_{ij} = \ps{\tx^i}{\tx^j}/n$, where
$x^i$ and $x^j$ are two replicas under 
$(G_{a,n}^{\text{pert}',\Delta^\varepsilon_n(D)})^{\otimes\infty}$, and where 
$\tx^i$ and $\tx^j$ are the transformations of $x^i$ and $x^j$ by \eqref{xtx}
respectively. 
For each integers $k\geq 2$ and $p \geq 1$ and each bounded measurable
function $f = f((\widetilde R_{ij})_{1\leq i,j\leq k})$, write 
\[
E(f, k, p) = \EE\EGibbs{f \widetilde R^p_{1,k+2}} 
  - \frac 1k \EE\EGibbs{f} \EE\EGibbs{\widetilde R^p_{12}} 
  - \frac 1k \sum_{i=2}^k \EE\EGibbs{f \widetilde R^p_{1,i}} . 
\]
Then, by a slight modification of the proof of \cite[Lemma 3.3]{pan-livre13}
based on Lemma~\ref{GGas} above, we can show that for each $n$, there exists a
deterministic sequence $(s^n, w_1^n, w_2^n, \ldots)$ entering the construction
of $H^{\text{pert}'}_n$, and such that 
\begin{equation}
\label{GG+h} 
\begin{aligned}
& E(f, k, p) \xrightarrow[n\to\infty]{} 0 \quad 
 \text{for each } k \geq 2, p\geq 1, \text{ and monomial } 
  f = f((\widetilde R_{ij})_{1\leq i,j\leq k}) , \ \text{and} \\ 
&\EE\EGibbs{\left| \eta - \EE\EGibbs{\eta} \right|} 
  \xrightarrow[n\to\infty]{} 0. 
\end{aligned}
\end{equation}  
Furthermore, similarly to what we obtained in Equations~\eqref{F->chi} 
and~\eqref{chimn} above through the ASS scheme, it holds that 
\begin{equation}
\label{liminfF} 
\liminf_n F_{a,n}^{\text{pert},\Delta^\varepsilon_n(D)} \geq \liminf_n
\bs\chi_{m,n}^{\text{pert}}, 
\end{equation} 
where $\bs\chi_{m,n}^{\text{pert}}$ has the same expression as $\bs\chi_{m,n}$ 
in~\eqref{chimn} with the difference that $\EGibbs{\cdot}'$ is
now the expectation with respect to 
$(G_{a,n}^{\text{pert}',\Delta^\varepsilon_n(D)})^{\otimes\infty}$. In the
remainder, the sequence $(s^n, w_1^n, w_2^n, \ldots)$ is chosen for each $n$
in such a way that all these properties are satisfied. 

As a next step, we need to replace the processes $Q(x)$ and $Y(x)$ in the
expression of $\bs\chi_{m,n}^{\text{pert}}$ with $Q(\tx)$ and $Y(\tx)$
respectively. Writing 
\begin{align*} 
\bs{\tilde\chi}_{m,n}^{\text{pert}}
 &=  
 \frac 1m \EE \log \EGibbs{\int_{\Delta^\varepsilon_m(D) \cap [0,a]^m}  
 \exp\left( 
 \ps{u}{Q(\tx)} + \beta\alpha \ps{u}{1_m} \frac{\ps{x}{1_n}}{n}  
 \right) \mu_\beta^{\otimes m}(du)}' \nonumber  \\
 &\phantom{=} 
  - \frac 1m \EE \log \EGibbs{ \exp\left( \sqrt{m} Y(\tx) 
    + m \frac{\beta\alpha}{2} \frac{\ps{x}{1_n}^2}{n^2} \right) }', 
\end{align*}
it is true that 
\begin{equation} 
\label{chi-chi} 
 \left| \bs{\tilde\chi}_{m,n}^{\text{pert}} - \bs\chi_{m,n}^{\text{pert}} 
  \right| \leq C \varepsilon^{1/4}, 
\end{equation} 
see \cite[\S 6]{pan-18}, which allows us to focus the lower bound analysis on 
$\bs{\tilde\chi}_{m,n}^{\text{pert}}$. This quantity is a continuous 
functional of the distribution of the couple 
$\left((\widetilde R_{i,j})_{i,j\geq 1}, 
  \left( \ps{x^k}{1} / n \right)_{k\geq 1} \right)$, under the distribution 
$\EE(G_{a,n}^{\text{pert}',\Delta^\varepsilon_n(D)})^{\otimes\infty}$, in the
topology of the narrow convergence of the finite dimensional distributions of
$\left((\widetilde R_{i,j})_{i,j\geq 1}, ( \ps{x^k}{1} / n)_{k\geq 1} \right)$. 

Let $\zeta \in \fop_D$ be defined as in~\eqref{lq} and~\eqref{fop}, and let
$(v_{\bi})_{\bi\in\NN^K} \sim \RPC_{\bs\lambda}$.  Consider the two Gaussian
processes $( q_{\bi} )_{\bi\in\NN^K}$ and $( y_{\bi} )_{\bi\in\NN^K}$
independent of $(v_{\bi})$ and such that 
\[
\EE q_{\bi^1} q_{\bi^2} = \xi'(b_{\bi^1\wedge\bi^2}) \quad \text{and} \quad  
\EE y_{\bi^1} y_{\bi^2} = \theta(b_{\bi^1\wedge\bi^2}). 
\]
Given an integer $m > 0$, let $(\bq_{\bi} )_{\bi\in\NN^K}$ be a $\RR^m$--valued
Gaussian process made of $m$ independent copies of $( q_{\bi} )_{\bi\in\NN^K}$.
Define the functions 
\begin{align*} 
f^1_m(\Delta^\varepsilon_m(D),a,\zeta,h) &= 
  \frac 1m \EE \log \sum_{\bi\in\NN^K} v_{\bi} 
 \int_{\Delta^\varepsilon_m(D) \cap [0,a]^m} 
 \exp\left( \ps{u}{\bq_{\bi}} + \beta\alpha h \ps{u}{1_m} \right) 
   \mu_\beta^{\otimes m}(du), \quad \text{and} \\ 
 f^2(\zeta) &= 
   \frac 1m \EE \log \sum_{\bi\in\NN^K} v_{\bi}  
  \exp\left(\sqrt{m} y_{\bi} \right) \stackrel{f^2}{=}   
 \frac 12 \sum_{k=0}^{K-1} \lambda_k (\theta(b_{k+1})- \theta(b_{k})) , 
\end{align*} 
where the identity $\stackrel{f^2}{=}$ is obtained by Proposition~\ref{P-RPC}.
We also define 
\[
\Phi_a(\zeta,h,\gamma) = 
  \EE \log \sum_{\bi\in\NN^K} v_{\bi} 
 \int_0^a 
 \exp\left( u q_{\bi} + \beta\alpha h u + \gamma u^2\right) \mu_\beta(du) . 
\]
We have the two following lemmas: 
\begin{lemma}
\label{pan-lm6} 
$\displaystyle{\liminf_m f^1_m(\Delta^\varepsilon_m(D),a,\zeta,h) 
 \geq \inf_{\gamma} \left( \Phi_a(\zeta,h,\gamma) - \gamma D \right)}$. 
\end{lemma}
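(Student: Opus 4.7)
The plan is to introduce a Lagrange multiplier $\gamma$ enforcing the norm constraint $\|u\|^2/m\approx D$ on $\Delta^\varepsilon_m(D)$, then exploit the product structure of the $u$-integrand over the $m$ coordinates via the remark following Proposition~\ref{P-RPC}, so that the infimum over $\gamma$ appears naturally as the Legendre dual of the norm constraint.

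For any $\gamma\in\RR$ and $u\in\Delta^\varepsilon_m(D)$ one has $|\gamma(\|u\|^2-mD)|\leq|\gamma|m\varepsilon$. Inserting the identity $1=e^{-\gamma(\|u\|^2-mD)}\cdot e^{\gamma(\|u\|^2-mD)}$ into the integrand defining $f^1_m$ and bounding the first factor pointwise on $\Delta^\varepsilon_m(D)$ by $e^{|\gamma|m\varepsilon}$ yields
\[
f^1_m(\Delta^\varepsilon_m(D),a,\zeta,h)\geq -\gamma D-|\gamma|\varepsilon+\frac{1}{m}\EE\log\sum_{\bi\in\NN^K}v_{\bi}\int_{\Delta^\varepsilon_m(D)\cap[0,a]^m}e^{\ps{u}{\bq_{\bi}}+\beta\alpha h\ps{u}{1_m}+\gamma\|u\|^2}\,\mu_\beta^{\otimes m}(du).
\]
I would then choose $\gamma=\gamma_*$ as a minimizer of $\gamma\mapsto\Phi_a(\zeta,h,\gamma)-\gamma D$ on $\RR$; such a minimizer exists and is finite because $\Phi_a(\zeta,h,\cdot)$ is convex as a log-moment-generating function (after the RPC averaging provided by Proposition~\ref{P-RPC}) and its derivative equals a Gibbs-weighted expectation of $u^2$ that sweeps $(0,a^2)\ni D$ as $\gamma$ ranges over $\RR$, since $u\in[0,a]$.

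At $\gamma_*$ the first-order condition $\partial_\gamma\Phi_a(\zeta,h,\gamma_*)=D$ identifies $D$ with the expected value of $\|u\|^2/m$ under the Gibbs measure on $\NN^K\times[0,a]^m$ induced by the $\gamma_*$-tilted weights; a two-step concentration estimate (sub-Gaussian conditionally on $(v_{\bi},\bq_{\bi})$ via Hoeffding, since each $u_l\in[0,a]$, combined with a Poincaré-type control of the fluctuations of the conditional mean through the disorder) then shows that $\|u\|^2/m\in\Delta^\varepsilon_m(D)$ with Gibbs-probability $1-o_m(1)$. Consequently, replacing $\Delta^\varepsilon_m(D)\cap[0,a]^m$ by $[0,a]^m$ in the displayed inequality alters $\frac{1}{m}\EE\log(\cdot)$ only by an $o_m(1)$ error, after which the integrand is a pure $m$-fold product and the product remark following Proposition~\ref{P-RPC} gives $\frac{1}{m}\EE\log(\cdots)=\Phi_a(\zeta,h,\gamma_*)$. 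Combining, $\liminf_m f^1_m(\Delta^\varepsilon_m(D),a,\zeta,h)\geq \inf_{\gamma\in\RR}(\Phi_a(\zeta,h,\gamma)-\gamma D)-|\gamma_*|\varepsilon$; the residual $|\gamma_*|\varepsilon$ is intrinsic slack of the Lagrange trick on a thickened sphere and is absorbed by the surrounding $\varepsilon\to 0$ step of the lower-bound argument for $\liminf_n\widetilde F_n$. The main obstacle is the concentration step, because the Gibbs mean of $\|u\|^2/m$ is an RPC-weighted quantity rather than a plain i.i.d.\ average: promoting the averaged identity $\partial_\gamma\Phi_a(\zeta,h,\gamma_*)=D$ into a quenched statement requires controlling both the per-coordinate fluctuations via Hoeffding and the fluctuations of the conditional mean through the Gaussian disorder $(\bq_{\bi})$, which can be achieved by a second-moment bound on the log-partition function (as in \cite[\S 6]{pan-18}) together with standard Gaussian concentration.
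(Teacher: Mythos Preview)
Your proposal is essentially correct and follows the same route as the proof of \cite[Lemma~6]{pan-18} that the paper defers to: introduce the Lagrange multiplier $\gamma$, pick $\gamma_*$ via the first-order condition $\partial_\gamma\Phi_a(\zeta,h,\gamma_*)=D$, show that under the $\gamma_*$--tilted Gibbs measure on $[0,a]^m\times\NN^K$ the self-overlap $\|u\|^2/m$ concentrates at $D$ so that the shell restriction costs $o_m(1)$, and then invoke the product remark after Proposition~\ref{P-RPC} to factorize.

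Two small comments. First, your argument delivers $\liminf_m f^1_m\geq \inf_\gamma(\Phi_a-\gamma D)-|\gamma_*|\varepsilon$ rather than the clean statement; as you note, this slack is harmless since $\varepsilon\to 0$ in the enveloping proof, and indeed the same $\varepsilon$--loss is implicit in Panchenko's version. Second, for the concentration step Panchenko's actual mechanism is slightly different from your Hoeffding-plus-disorder decomposition: he works directly with the random convex function $\gamma\mapsto \frac{1}{m}\log Z_m^{\text{full},\gamma}$, uses Gaussian concentration to show it is close to $\Phi_a(\gamma)$, and then a Chernoff bound on $\{\,|\|u\|^2/m-D|>\varepsilon\,\}$ (tilting by $\gamma_*\pm\delta$) shows the complement of the shell has exponentially small Gibbs mass. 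This avoids having to disentangle the RPC weights from the per-coordinate averages, which is the point you flag as the ``main obstacle''; your proposed fix via a second-moment bound on the log-partition function is exactly the same device in disguise.
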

This lemma is proven by a straightforward adaptation of the proof of 
\cite[Lemma 6]{pan-18}. 

\begin{lemma}
\label{f12lip} 
It holds that 
\[
\left| f^1_m(\Delta^\varepsilon_m(D),a,\zeta,h) - 
 f^1_m(\Delta^\varepsilon_m(D),a,\tilde \zeta,\tilde h) \right| 
 \leq C \left( \bd(\zeta,\tilde\zeta) + | h - \tilde h | \right). 
\]
for each measures $\zeta, \tilde\zeta \in \fop_D$ and reals 
$h,\tilde h \geq 0$, where $C > 0$ is independent of $m$. Moreover, 
\[
\left| f^2(\zeta) - f^2(\tilde \zeta) \right| \leq C \bd(\zeta,\tilde\zeta) .
\]
\end{lemma}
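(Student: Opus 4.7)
The plan is to treat $f^2$ and $f^1_m$ separately, and for $f^1_m$ to split the estimate into its dependence on $\zeta$ and on $h$.

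The bound on $f^2$ is essentially algebraic. Recalling from the computation just before Lemma~\ref{guerra} that $f^2(\zeta) = -\tfrac12 \int_0^D \theta(b)\,\zeta(db) + \tfrac12 \theta(D)$, and that both $\zeta$ and $\tilde\zeta$ are probability measures supported in $[0,D]$, an integration by parts gives
\[
\int \theta\, d(\zeta - \tilde\zeta) \;=\; -\int_0^D \theta'(t)\bigl(\zeta([0,t]) - \tilde\zeta([0,t])\bigr)\,dt .
\]
Since $|\theta'(t)| = \beta^2\kappa^2 t \leq \beta^2\kappa^2 D$ on $[0,D]$, this yields the second assertion with $C = \beta^2 \kappa^2 D / 2$.

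For $f^1_m$ I would use the triangle inequality
\[
\bigl| f^1_m(\cdot,\zeta,h) - f^1_m(\cdot,\tilde\zeta,\tilde h) \bigr| \;\leq\; \bigl| f^1_m(\cdot,\zeta,h) - f^1_m(\cdot,\tilde\zeta,h) \bigr| + \bigl| f^1_m(\cdot,\tilde\zeta,h) - f^1_m(\cdot,\tilde\zeta,\tilde h) \bigr| .
\]
The $h$-Lipschitz piece is elementary: differentiating inside the $\log$ produces $\partial_h f^1_m = (\beta\alpha/m)\,\EE \EGibbs{\ps{u}{1_m}}$, where $\EGibbs{\cdot}$ denotes the RPC-weighted Gibbs average associated with $(\zeta,h)$ over $u \in \Delta^\varepsilon_m(D) \cap [0,a]^m$. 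Since $0 \leq \ps{u}{1_m} \leq m a$ on this support, $|\partial_h f^1_m| \leq \beta |\alpha| a$ uniformly in $\zeta$ and $m$, giving the desired bound.

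The main work is the $\zeta$-dependence of $f^1_m$, which is entirely parallel to Lemma~\ref{Xcont}. Following Guerra's interpolation, as presented in \cite{gue-03} or \cite[Proposition~6.3]{dom-mou-livre24}, I would pass to a common refinement of the supports of $\zeta$ and $\tilde\zeta$ (adding fictitious atoms with repeated $\lambda_k$-values if necessary so that both measures are parametrized by the same $(\lambda_k)$), and then interpolate linearly in the parameter vector $(b_k)$ between the two measures. Differentiating $f^1_m$ along this path and applying the Gaussian integration by parts formula, exactly as in the derivation of $\partial_t \varphi^{\Delta_\varepsilon(D)}$ in the proof of Lemma~\ref{guerra}, one produces overlap-dependent terms of the form
\[
\tfrac{\beta^2\kappa^2}{2}\,\EE \EGibbs{\bigl(R_{uu'} - b_{\bi\wedge\bj}\bigr)^2} ,
\]
where $R_{uu'} = \ps{u}{u'}/m$ is the overlap of two replicas drawn under the interpolated Gibbs measure. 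The confinement $u,u' \in [0,a]^m$ forces $|R_{uu'}| \leq a^2$, and $b_{\bi\wedge\bj} \in [0,D]$, so the integrand is uniformly bounded by a constant $C(\beta,\kappa,a,D)$. Reorganizing the integral along the interpolation path then yields a sum of increments $|b_k - \tilde b_k|$ weighted by $(\lambda_k - \lambda_{k-1})$, which is exactly $\bd(\zeta,\tilde\zeta)$. The only real technical obstacle is the common-refinement bookkeeping, which proceeds identically to the cited references; once it is set up, the Gaussian IP computation goes through without modification. Combining the two estimates gives the claimed joint Lipschitz bound.
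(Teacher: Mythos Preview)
Your overall strategy is correct and close to the paper's, but there is one concrete slip and one structural difference worth noting.

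The slip: when you interpolate linearly in the $(b_k)$ and apply Gaussian integration by parts, the resulting derivative is \emph{not} of the form $\tfrac{\beta^2\kappa^2}{2}\,\EE\EGibbs{(R_{uu'}-b_{\bi\wedge\bj})^2}$. That squared expression is what appears in Guerra's free-energy interpolation (Lemma~\ref{guerra}), where one interpolates between the full Hamiltonian and the cavity field. Here, varying only the cavity parameters $(b_k)$, the derivative is the simpler $\tfrac{\xi''(b_k)}{2}\,\EE\EGibbs{R_{uu'}\,\1_{\{\bi\wedge\bj=k\}}}$, exactly as in Appendix~\ref{prf-Xcont}. Your subsequent reasoning still goes through with this correct form: $|R_{uu'}|\le a^2$, and the RPC invariance property $\EE\EGibbs{\1_{\{\bi\wedge\bj=k\}}}=\lambda_k-\lambda_{k-1}$ supplies the weights that reassemble into $\bd(\zeta,\tilde\zeta)$.

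The structural difference: the paper does not split via the triangle inequality. Instead it interpolates \emph{jointly} in $(\zeta,h)$ by setting $\bq_{\bi}(t)=\sqrt{t}\,\bq_{\bi}+\sqrt{1-t}\,\tilde\bq_{\bi}$ and $h(t)=th+(1-t)\tilde h$, where $(\bq_{\bi})$ and $(\tilde\bq_{\bi})$ are independent cavity processes built from $\zeta$ and $\tilde\zeta$ respectively. Differentiating the resulting $\varphi(t)$ and integrating by parts produces directly a term bounded by $C\bd(\zeta,\tilde\zeta)$ plus the external-field contribution $(\beta\alpha/m)(h-\tilde h)\,\EE\EGibbs{\ps{u}{1_m}}$, bounded by $a\beta|\alpha|\,|h-\tilde h|$. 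Your route (triangle inequality, then the Lemma~\ref{Xcont} interpolation for $\zeta$ alone) is equally valid and arguably more transparent; the paper's joint interpolation is slightly more economical but requires carrying the two Gaussian processes simultaneously.

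Your treatment of $f^2$ by integration by parts on $\int\theta\,d(\zeta-\tilde\zeta)$ is correct and more explicit than the paper, which simply calls the bound well-known.
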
 
\begin{proof}[Sketch of proof] 
The second bound is well-known. The proof for the first bound is a slight
modification of the proof of \cite[Lemma 7]{pan-potts-18} that we succinctly
explain. We can assume without generality loss that the two measures $\zeta$
and $\tilde\zeta$ take the forms that follow~\eqref{zet-tzet} in the proof of
Lemma~\ref{Xcont} below.  Let $(\bq_{\bi})_{\bi\in\NN^K}$ be the
$\RR^m$--valued Gaussian process constructed from $\zeta$ as in the definition
of $f^1_m(\Delta^\varepsilon_m(D),a,\zeta,h)$ above, and let
$(\tilde\bq_{\bi})_{\bi\in\NN^K}$ a $\RR^m$--valued Gaussian process
independent of $(\bq_{\bi})$ and $(v_{\bi})$, and constructed from
$\tilde\zeta$ similarly to $(\bq_{\bi})$.  For $t \in [0,1]$, define the
process $(\bq_{\bi}(t))_{\bi\in\NN^K}$ as $\bq_{\bi}(t) = \sqrt{t} \bq_{\bi} +
\sqrt{1-t} \tilde\bq_{\bi}$. Also let $h(t) = t h + (1-t) \tilde h$, and let
$G_t(\bi, dx) \in \cP(\NN^K \times (\Delta^\varepsilon_m(D) \cap [0,a]^m))$ be
the Gibbs measure $G_t(\bi, du) \sim v_{\bi} \exp\left( \ps{u}{\bq_{\bi}(t)} +
\beta\alpha h(t) \ps{u}{1_m} \right) \mu_\beta^{\otimes m}(du)$ with mean
$\EGibbs{\cdot}_t$. Define 
\[
\varphi(t) = \frac 1m \EE \log \sum_{\bi\in\NN^K} v_{\bi} 
 \int_{\Delta^\varepsilon_m(D) \cap [0,a]^m} 
 \exp\left( \ps{u}{\bq_{\bi}(t)} + \beta\alpha h(t) \ps{u}{1_m} \right) 
   \mu_\beta^{\otimes m}(du), 
\]
in such a way that $f^1_m(\Delta^\varepsilon_m(D),\zeta,h) = \varphi(1)$ and 
$f^1_m(\Delta^\varepsilon_m(D),\tilde\zeta,\tilde h) = \varphi(0)$. We have
\[
\varphi'(t) = \frac{1}{2m} \EE\EGibbs{ \frac{\ps{u}{\bq_{\bi}}}{\sqrt{t}} 
  - \frac{\ps{u}{\tilde\bq_{\bi}}}{\sqrt{1-t}} } 
+ \frac{\beta\alpha}{m} (h - \tilde h) \EE\EGibbs{\ps{u}{1_m}} . 
\]
The first term is treated by the IP formula as in the proof 
of~\cite[Lemma 7]{pan-potts-18}, and leads to the $\bd(\zeta,\tilde\zeta)$ 
term in the statement. The second term is bounded by 
$a \beta |\alpha| | h - \tilde h |$. 
\end{proof} 
We can now finish the proof of Proposition~\ref{low-bnd}. Recalling the 
convergence \eqref{Fpert} and the bounds~\eqref{liminfF} and~\eqref{chi-chi},
we obtain that 
\[
\liminf_n F_{a,n}^{\Delta^\varepsilon_n(D)} = 
\liminf_n F_{a,n}^{\text{pert},\Delta^\varepsilon_n(D)} \geq 
\liminf_m \liminf_n \bs{\tilde\chi}_{m,n}^{\text{pert}} - C \varepsilon^{1/4}.
\]
We now apply the well-known theory detailed in, \emph{e.g.}, \cite[\S
3.6]{pan-livre13}, as regards the treatment of the $\liminf_n$ at the right
hand side of this inequality.  For a given $m$, consider a sub-sequence of
$(n)$ converging to infinity, along which
$\bs{\tilde\chi}_{m,n}^{\text{pert}}$ converges to its limit inferior in $n$,
and the couple 
$\left((\widetilde R_{i,j})_{i,j\geq 1}, ( \ps{x^k}{1} / n)_{k\geq 1} \right)$ 
converges in distribution under
$\EE(G_{a,n}^{\text{pert}',\Delta^\varepsilon_n(D)})^{\otimes\infty}$.  By the
convergences~\eqref{GG+h}, the limit distribution of the array $(\widetilde
R_{i,j})_{i,j\geq 1}$ satisfies the GG identities, and the replicas
$\ps{x^k}{1} / n$ converge in probability towards a deterministic number $h^{m}
\in [0, \sqrt{D}]$. Along this sub-sequence that we re-denote as $(n)$, we have 
\begin{align*} 
\lim_n \bs{\tilde\chi}_{m,n}^{\text{pert}}
 &= \lim_n  
 \frac 1m \EE \log \EGibbs{\int_{\Delta^\varepsilon_m(D) \cap [0,a]^m}  
 \exp\left( 
 \ps{u}{Q(\tx)} + \beta\alpha h^m \ps{u}{1_m} 
 \right) \mu_\beta^{\otimes m}(du)}' \nonumber  \\
 &\phantom{=} 
  - \lim_n \frac 1m \EE \log \EGibbs{ \exp\left( \sqrt{m} Y(\tx) \right) }' 
   - (h^m)^2 \frac{\beta\alpha}{2} . 
\end{align*}
Furthermore, it is known that we can approximate the limit distribution of 
the overlap $\widetilde R_{12}$ with a measure $\zeta^m \in \fop_D$. By
absorbing the approximation error into, \emph{e.g.}, the small number 
$\varepsilon$, we obtain that 
\[
\liminf_n \bs{\tilde\chi}_{m,n}^{\text{pert}} \geq 
f^1_m(\Delta^\varepsilon_m(D),a,\zeta^m,h^m) - f^2(\zeta^m) 
- \frac{\beta\alpha}{2} (h^m)^2 - \varepsilon .
\]
Now, consider a sub-sequence of $(m)$ converging to infinity along which 
$h^m$ converges to a real number $h^\infty$, $\zeta^m$ converges 
narrowly, and $f^1_m(\Delta^\varepsilon_m(D),a,\zeta^m,h^m) - f^2(\zeta^m) 
- \frac{\beta\alpha}{2} (h^m)^2$ converges to its limit inferior. 
By Lemma~\ref{f12lip}, there exists $\zeta^\infty \in \fop_D$ such that 
\begin{align*}
\liminf_m f^1_m(\Delta^\varepsilon_m(D),a,\zeta^m,h^m) - f^2(\zeta^m) 
- \frac{\beta\alpha}{2} (h^m)^2 &\geq 
\liminf_m f^1_m(\Delta^\varepsilon_m(D),a,\zeta^\infty,h^\infty) 
 - f^2(\zeta^\infty) - \frac{\beta\alpha}{2} (h^\infty)^2 - \varepsilon
  \\
&\geq 
 \inf_{\gamma} \left( \Phi_a(\zeta^\infty,h^\infty,\gamma) - \gamma D \right) 
 - f^2(\zeta^\infty) - \frac{\beta\alpha}{2} (h^\infty)^2 - \varepsilon \\
&= \inf_{\gamma\in\RR} \left( P_a(\zeta^\infty, h^\infty,\gamma) 
    - \gamma D \right) - \frac{\beta\alpha}{2} (h^\infty)^2 - \varepsilon, 
\end{align*} 
where the second inequality is due to Lemma~\ref{pan-lm6}. This leads to the 
bound
\[
\liminf_n F_{a,n}^{\Delta^\varepsilon_n(D)} \geq 
  \inf_{\zeta\in\fop_D, h\geq 0,\gamma\in\RR} 
 \left( P_a(\zeta, h,\gamma) - \gamma D - \frac{\beta\alpha}{2} h^2 \right) 
   - C \varepsilon^{1/4} .
\]
We therefore have 
$\liminf F_{a,n} \geq 
\sup_{D} \liminf F_{a,n}^{\Delta^\varepsilon_n(D)} \geq 
 \sup_{D} \inf_{\zeta\in\fop_D, h\geq 0,\gamma} 
\left( P_a(\zeta, h,\gamma) - \gamma D - \beta\alpha h^2/2 \right) 
  - C \varepsilon^{1/4}$. Since $\varepsilon$ is arbitrary, we obtain that 
$\liminf F_{a,n} \geq 
  \sup_D \inf_{\zeta\in\fop_D, h\geq 0,\gamma} 
 \left( P_a(\zeta, h,\gamma) - \gamma D - \beta\alpha h^2/2 \right)$, and 
Proposition~\ref{low-bnd} follows from Lemma~\ref{F>Fa}.

Theorem~\ref{F->P} for $\alpha\leq 0$ results from Propositions~\ref{up-bnd} 
and~\ref{low-bnd}.

\section{Proof of Theorem~\ref{F->P} for $\alpha > 0$.} 
\label{alpha>0} 
We still use the notation $\eta_n(x) = \ps{x}{1} / n$. 
The term $\beta\alpha n \eta_n(x)^2 / 2$ in the Hamiltonian $H_n(x)$ is 
reminiscent of the ferro-magnetic interaction that appears in, \emph{e.g.}, 
the Curie-Weiss model. The proof principle for dealing with this term when
$\alpha > 0$ is well known, and can be found in \cite{chen-14}  
in the SK case with ferro-magnetic interaction. 
\begin{lemma}
Given a real number $h\in\RR$, consider the Hamiltonian with external field 
$\HEF_n(\cdot;h)$ defined on $\RR_+^n$ as 
\[
\HEF_n(x;h) = 
 \frac{\beta\kappa}{2\sqrt{n}} x^\T W x + \beta\alpha h n \, \eta_n(x) . 
\]
Given $a, A > 0$, define the free energies $F^{\text{EF},A}_n(h)$ and  
$\FEF_{a,n}(h)$ as

\begin{align*}
F^{\text{EF},A}_n(h) &= 
 \frac 1n \EE\log \int_{B_+^n(\sqrt{n} A)} \exp\left(\HEF_n(x;h)\right) 
  \mu_\beta^{\otimes n}(dx), \quad \text{and} \\
\FEF_{a,n}(h) &= \frac 1n \EE\log \int_{[0,a]^n} \exp\left(\HEF_n(x;h)\right) 
  \mu_\beta^{\otimes n}(dx). 
\end{align*} 
Then, 
\begin{align} 
\limsup_n F^{\text{EF},A}_n(h) &\leq 
  \sup_{a,D > 0} \inf_{\zeta\in\fop_D,\gamma\in\RR} 
\left(   P_a(\zeta,h,\gamma) - \gamma D \right), 
  \quad \text{and} \label{FEFA} \\
\lim_n \FEF_{a,n}(h) &=   
  \sup_{D > 0} \inf_{\zeta\in\fop_D,\gamma\in\RR} 
\left(   P_a(\zeta,h,\gamma) - \gamma D \right) . 
 \label{FEFa} 
\end{align} 
\end{lemma}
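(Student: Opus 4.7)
Both conclusions are obtained by rerunning the proofs of Propositions~\ref{up-bnd} and~\ref{low-bnd} with $H_n$ replaced by $\HEF_n(\cdot;h)$. The decisive simplification is that $\HEF_n$ contains only the linear external-field term $\beta\alpha h\ps{1}{x}$ in place of the quadratic self-interaction $\tfrac{\beta\alpha}{2}\ps{x}{1}^2/n$, so the sign-of-$\alpha$ obstruction that forced a separate treatment in Section~\ref{sec:ubnd} simply does not appear, and no Hubbard-Stratonovich decoupling is needed here; rather, the present lemma will be combined with such a decoupling in the sequel to treat $\alpha>0$.

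For the upper bound~\eqref{FEFA}, I follow Lemma~\ref{guerra} verbatim with $V_t$ in~\eqref{Vt} replaced by the interpolated Hamiltonian
\[
V_t^{\text{EF}}(x,\bi) = \sqrt{t}\,\frac{\beta\kappa}{2\sqrt{n}}x^\T W x + \sqrt{1-t}\,\ps{x}{\bs q_{\bi}} + \sqrt{t}\,\ps{1}{\bs y_{\bi}} + \beta\alpha h\,\ps{1}{x}.
\]
Since the external-field term is $t$-independent, the Gaussian integration-by-parts computation of $\partial_t\varphi^{\Delta_\varepsilon(D)}$ produces only the two contributions already present in~\eqref{phi'}; the residual $\tfrac{\beta\alpha}{2}\EGibbs{(\ps{x}{1}/n-h)^2}_t$ term, whose sign was delicate, is absent. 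This gives $\partial_t\varphi^{\Delta_\varepsilon(D)}(t)\leq\tfrac{\beta^2\kappa^2}{4}\varepsilon^2$ directly, and the remainder of Lemma~\ref{guerra} proceeds with $h$ merely a fixed parameter. Invoking then the Mosco argument of Lemma~\ref{mosco} --- which goes through with the finite-dimensional variable reduced from $(h,\gamma)$ to $\gamma$ --- to exchange $\bP_\infty^A$ for $\sup_a\bP_a^A$ inside the infimum, and sending $A\to\infty$ via the pointwise limit $\bP_a^A\to P_a$, yields~\eqref{FEFA}.

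For the limit~\eqref{FEFa}, the upper bound is obtained by applying the same Guerra interpolation directly on $[0,a]^n\cap\Delta_\varepsilon(D)$. The key bookkeeping point is that the choice $A=a$ makes $[0,a]^n$ already lie inside $B_+^n(a\sqrt{n})$, and the coordinatewise product structure of $[0,a]^n$, combined with the sum-of-independent-terms remark following Proposition~\ref{P-RPC}, forces the identity $\bP_a^a(\zeta,h,\gamma)=P_a(\zeta,h,\gamma)$; no further passage to a limit in $a$ or $A$ is then needed on the right-hand side, so summing the Guerra bound over the $O(1/\varepsilon)$ shells covering $[0,a^2]$ and letting $\varepsilon\downarrow 0$ gives $\limsup_n\FEF_{a,n}(h)\leq\sup_D\inf_{\zeta,\gamma}(P_a(\zeta,h,\gamma)-\gamma D)$. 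The matching lower bound is a rerun of the ASS scheme of Proposition~\ref{low-bnd} with $\HEF_n(\cdot;h)$ in place of $H_n$: since $\eta_n$ enters linearly as a fixed external field rather than quadratically, no concentration of $\eta_n$ is required, and the supplementary perturbation $n^\delta s\,\eta_n(x)$ together with the corresponding last assertion of Lemma~\ref{GGas} may be dropped while the Ghirlanda-Guerra perturbation $n^\varrho\sum_j(2a)^{-j}w_jg_{n,j}(\tx)$ is retained. The ASS cavity quantity $\bs\chi_{m,n}^{\text{pert}}$ specializes with $\beta\alpha\,\eta_n(x)\ps{u}{1_m}$ replaced by the constant $\beta\alpha h\ps{u}{1_m}$, so the double-limit extraction at the end of Section~\ref{sec:lbnd} reduces to picking out only $\zeta^\infty$ (with $h^\infty=h$ fixed), and Lemma~\ref{pan-lm6} together with the $\zeta$-Lipschitz part of Lemma~\ref{f12lip} yields $\liminf_n\FEF_{a,n}^{\Delta^\varepsilon_n(D)}(h)\geq\inf_{\zeta,\gamma}(P_a(\zeta,h,\gamma)-\gamma D)-C\varepsilon^{1/4}$, closing the bound after taking $\sup_D$ and $\varepsilon\downarrow 0$. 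The most attention-demanding step, and the only genuine obstacle, is verifying the bookkeeping identity $\bP_a^a=P_a$ that allows the right-hand side of~\eqref{FEFa} to avoid the extra $\sup_a$ of~\eqref{FEFA}, together with the persistence of the Mosco/Fenchel-Legendre machinery of Lemma~\ref{mosco} once $h$ is held fixed.
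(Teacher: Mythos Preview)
Your argument is correct, and for~\eqref{FEFA} it is essentially the paper's proof: the paper absorbs the linear external field into a modified reference measure $\nu_{\beta,h}(dx_1)=x_1^{\phi\beta-1}\exp(-\beta x_1^2/2+\beta(1+\alpha h)x_1)\,dx_1$ so that the Hamiltonian reduces to the pure GOE piece, then applies Section~\ref{sec:ubnd} with $\alpha=0$, which is equivalent to keeping $\beta\alpha h\ps{1}{x}$ as a $t$-independent drift in $V_t$.

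The genuine divergence is for~\eqref{FEFa}. The paper does not redo the Guerra upper bound and the ASS lower bound: since $[0,a]^n$ is a compact product set and the Hamiltonian (after absorbing the field) is exactly the one treated in Panchenko~\cite{pan-18}, it simply invokes \cite[Theorem~1]{pan-18} to obtain the full limit in one line. Your route---Guerra on $[0,a]^n\cap\Delta_\varepsilon(D)$ using the identity $\bP_a^a=P_a$ from the product structure, then ASS with the $\eta$-perturbation dropped---is a valid self-contained alternative that avoids the external citation, at the cost of rerunning two proofs. The paper's approach is shorter; yours is more explicit and confirms that the only ingredient from~\cite{pan-18} actually needed is already reproduced in Sections~\ref{sec:ubnd}--\ref{sec:lbnd}.
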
 
\begin{proof}
We can write 
\[
F^{\text{EF},A}_n(h) = \frac 1n \EE\log \int_{B_+^n(\sqrt{n} A)} 
\exp\left( \frac{\beta\kappa}{2\sqrt{n}} x^\T W x \right) 
  \nu_{\beta,h}^{\otimes n}(dx) , 
\]
with $\nu_{\beta,h}$ being the positive measure on $\RR_+$ defined as 
\[
\nu_{\beta,h}(dx_1) = 
 x_1^{\phi\beta - 1} 
  \exp\left( -\beta x_1^2 / 2 + \beta (1+\alpha h) x_1 \right) \, d x_1.
\]
Therefore, we can apply to this free energy the development of
Section~\ref{sec:ubnd} after replacing the measure $\mu_\beta$ with
$\nu_{\beta,h}$, and considering that $\alpha = 0$ in the Hamiltonian.  This
leads to the bound~\eqref{FEFA} which is the analogue of~\eqref{lisupFA}.  We
stress that in~\eqref{FEFA}, we  take the infimum over $\zeta\in\fop_D$ and
$\gamma\in\RR$ only because there is no term in $\eta_n(x)^2$ in the
Hamiltonian $\HEF(\cdot;h)$. 
 
To establish the convergence~\eqref{FEFa}, write 
\[
\FEF_{a,n}(h) = \frac 1n \EE\log \int_{[0,a]^n} 
\exp\left( \frac{\beta\kappa}{2\sqrt{n}} x^\T W x \right) 
  \nu_{\beta,h}^{\otimes n}(dx) , 
\]
and use \cite[Theorem 1]{pan-18}. 
\end{proof} 

We shall work with $\liminf \widetilde F_n$ then with $\limsup \widetilde F_n$.

\subsection{Lower bound} 
Considering the free energy $F_{a,n}$ used in the statement of 
Lemma~\ref{F>Fa}, and using that $\eta(x)^2 \geq 2h \eta(x) - h^2$ for an 
arbitrary $h\in\RR$, we have 
\[
F_{a,n} = \frac 1n \EE\log \int_{[0,a]^n} \exp\left( 
 \frac{\beta\kappa}{2\sqrt{n}} x^\T W x + 
         \frac{\beta\alpha n}{2} \eta(x)^2 \right) \mu_\beta^{\otimes n}(dx) 
 \geq \FEF_{a,n}(h) - \frac{\beta\alpha}{2} h^2 . 
\]
By the previous lemma, 
\[
\liminf F_{a,n} \geq \sup_{h,D > 0} \inf_{\zeta\in\fop_D, \gamma\in\RR} 
  \left( P_a(\zeta,h,\gamma) - \gamma D - \frac{\beta\alpha}{2} h^2\right) , 
\]
and we obtain by Lemma~\ref{F>Fa} that 
\[
\liminf \widetilde F_n \geq 
\sup_{a,h,D > 0} \inf_{\zeta\in\fop_D, \gamma\in\RR} 
  \left( P_a(\zeta,h,\gamma) - \gamma D - \frac{\beta\alpha}{2} h^2 \right).
\]
 
\subsection{Upper bound} 
Write the free energy $F^A_n$ in the statement of Lemma~\ref{F<F^A} 
as $F_n^A = \EE \cX^A_n$ where $\cX^A_n$ is the so-called free energy density. 
Given a measurable bounded set $\cS \subset\RR_+^n$, write 
\[
F^\cS_n = \frac 1n \EE\log \int_\cS e^{H_n(x)} \mu_\beta^{\otimes n}(dx) 
 \quad\text{and}\quad 
\cX^\cS_n = \frac 1n \log \int_\cS e^{H_n(x)} \mu_\beta^{\otimes n}(dx) 
\]
(in such a way that $F^A_n = F^{B_+^n(\sqrt{n} A)}_n$ and 
$\cX^A_n = \cX^{B_+^n(\sqrt{n} A)}_n$). Fix a large number $N > 0$ 
independently of $n$. For an integer $i > 0$, consider the 
$\ell^1$ ring $\cR_i \subset\RR_+^n$ defined as 
\[
\cR_i = \{ x \in \RR_+^n \ : \ (i-1)/N < \eta_n(x) \leq i/N \}. 
\]
By Cauchy-Schwarz, each $x\in B_+^n(\sqrt{n} A)$ satisfies 
$\eta_n(x) \leq A$. Therefore, writing $M = \lceil NA \rceil$, we have 
that $\cR_1 \cap B_+^n(\sqrt{n} A), \ldots, \cR_M \cap B_+^n(\sqrt{n} A)$ is 
a partition of $B_+^n(\sqrt{n} A)$. Thus, 
\[
\cX^A_n \leq \frac{\log M}{n} + 
  \max_{i\in[M]} \cX_n^{\cR_i \cap B_+^n(\sqrt{n} A)} . 
\]
By Gaussian concentration, see, \emph{e.g.}, \cite[Theorem~1.2]{pan-livre13},  
it holds that  
\[
\forall i \in [M], \forall t > 0, \quad 
\PP\left[ \left| \cX^{\cR_i \cap B_+^n(\sqrt{n} A)}_n - 
   F^{\cR_i \cap B_+^n(\sqrt{n} A)}_n \right| \geq t \right] 
 \leq 2 \exp( - n t^2 / (2\beta^2\kappa^2 A^2) ),  
\]
which implies that 
$\EE \left(\cX^{\cR_i \cap B_+^n(\sqrt{n} A)}_n - 
F_n^{\cR_i \cap B_+^n(\sqrt{n} A)}\right)^2 \leq 4 \beta^2\kappa^2 A^2  / n$. 
Consequently, 
\[
F^A_n = \EE \cX^A_n \leq 
 \frac{\log M}{n} + \EE \max_{i\in[M]} \cX^{\cR_i \cap B_+^n(\sqrt{n} A)} 
\leq \frac{\log M}{n} + \frac{CM}{\sqrt{n}} 
 + \max_{i\in[M]} F_n^{\cR_i \cap B_+^n(\sqrt{n} A)}. 
\]
When $\eta \in ( (i-1)/N, i/N]$, it holds that 
$\eta^2 \leq 2\eta i/N - (i/N)^2 + 1/N^2$. We can thus write 
\begin{align*} 
F_n^{\cR_i \cap B_+^n(\sqrt{n} A)} &= \frac 1n \EE\log 
\int_{\cR_i \cap B_+^n(\sqrt{n} A)} \exp\left( 
 \frac{\beta\kappa}{2\sqrt{n}} x^\T W x + 
         \frac{\beta\alpha n}{2} \eta(x)^2 \right) \mu_\beta^{\otimes n}(dx) \\
 &\leq F^{\text{EF},A}_n(i/N) 
  - \frac{\beta\alpha}{2} \left(\frac iN \right)^2 
  + \frac{\beta\alpha}{2N^2} , 
\end{align*} 
and we obtain by the previous lemma that 
\begin{align*} 
\limsup_n F^A_n &\leq \max_{i\in[M]} \left(\limsup_n \FEF_n(i/N)
  - \frac{\beta\alpha}{2} \left(\frac iN \right)^2  \right) 
  + \frac{\beta\alpha}{2N^2}  \\ 
 &\leq \sup_{a,h,D > 0} \inf_{\zeta\in\fop_D,\gamma\in\RR} 
\left(   P_a(\zeta,h,\gamma) - \gamma D - \frac{\beta\alpha}{2} h^2 \right) 
  + \frac{\beta\alpha}{2N^2} . 
\end{align*} 
Recalling that $N$ is arbitrarily large and using Lemma~\ref{F<F^A}, we obtain
that 
\[
\limsup_n \widetilde F_n \leq 
 \sup_{a,h,D > 0} \inf_{\zeta\in\fop_D,\gamma\in\RR} 
\left(   P_a(\zeta,h,\gamma) - \gamma D - \frac{\beta\alpha}{2} h^2 \right) , 
\]
and Theorem~\ref{F->P} is proven for $\alpha > 0$.


\appendix
\section{Proof of Proposition~\ref{prop-eds}} 
\label{anx-eds}  

Using the martingale representation theorem, we know from, \emph{e.g.},
\cite[Chapter~4, Theorem~2.2 and 2.3 and Remark.~2.1]{ike-wat-livre89} that for each
probability measure $\mu \in \cP(\RR_+^n)$, there exists a weak solution to the
SDE~\eqref{eds} such that $x_0 \sim \mu$ and $x_0 \indep B$. Defining the
explosion time of the process $(x_t)$ as 
\[
\tau_\infty = \inf \{ t \geq 0 \ : \ \| x_t \| = \infty \}, 
\]
we now show that \eqref{l+} (or, equivalently, $\lambda_+^{\min} > 0$) is
satisfied, then $\tau_\infty = \infty$ with probability one, which means
that the solutions of~\eqref{eds} never explode. 

To this end, it is enough to assume that $x_0$ is an arbitrary deterministic
vector in $\RR_+^n$.  Writing $V(x) = \ps{1}{x}$, any solution of the
SDE~\eqref{eds} starting with $x_0$ satisfies 
\begin{align*}
V(x_t) &= V(x_0) + \int_0^t \left( \ps{x_u}{1} + \phi n 
  + \ps{x_u}{\left( \Sigma - I \right) x_u} \right) du 
   + \sqrt{2 T} \int_0^t \ps{\sqrt{x_u}}{dB_u} .
\end{align*}
Given $a > 0$, define the stopping time 
\[
\tau_a = \inf \left\{ t \geq 0 \ : \ V(x_t) \geq a \right\} . 
\]
Observing that $\tau_\infty$ is a $\bar\RR$--valued random variable given as
$\tau_\infty = \lim_{a\to\infty} \tau_a$, our purpose is to show that
$\PP[\tau_\infty = \infty] = 1$.  The techniques for establishing this
convergence are well-known \cite{mao-mar-ren-02}.  In our case, we write 
\begin{align*} 
V(x_{t\wedge\tau_a}) &= V(x_0) + \int_0^{t\wedge\tau_a}
   \left( \ps{x_u}{1} + \phi n 
  + \ps{x_u}{\left( \Sigma - I \right) x_u} \right) du 
   + \sqrt{2 T} \int_0^{t\wedge\tau_a} \ps{\sqrt{x_u}}{dB_u},
\end{align*}
thus, 
\begin{align*} 
\EE V(x_{t\wedge\tau_a}) &= V(x_0) + \EE \int_0^{t\wedge\tau_a}
   \left( \ps{x_u}{1} + \phi n 
  - \ps{x_u}{\left( I - \Sigma \right) x_u} \right) du \\ 
 &\leq V(x_0) + \EE \int_0^{t\wedge\tau_a} 
  \left( C - \lambda_+^{\min} \| x_u \|^2 / 2 \right) du \\
&\leq V(x_0) + C \EE[t\wedge\tau_a].  
\end{align*}
Assume there exists $T, \varepsilon > 0$ such that $\PP[\tau_\infty
\leq T] \geq \varepsilon$, which implies that 
$\PP[\tau_a \leq T] \geq \varepsilon$ for all $a$. Setting $t = T$, we get that 
\[
\EE V(x_{T\wedge\tau_a}) \leq V(x_0) + C \EE (T\wedge\tau_a) \leq 
 V(x_0)  + CT .
\]
On the event $\cE_a = [\tau_a \leq T]$, we have $V(x_{\tau_a}) = a$. Therefore,
\[
V(x_0)  + CT \geq \EE V(x_{T\wedge\tau_a}) \geq 
 \EE \1_{\cE_a} V(x_{\tau_a}) \geq \varepsilon a .
\]
Making $a\to\infty$, we obtain the contradiction 
\[
V(x_0)  + CT \geq \infty .
\]
The last step of the proof is to establish the pathwise uniqueness of the
solution of~\eqref{eds}. Indeed, pathwise uniqueness implies the existence of a
unique strong solution for~\eqref{eds} \cite[Chapter~4, Theorem~1.1]{ike-wat-livre89}. 

For notational simplicity, we rewrite Eq.~\eqref{eds} as $dx_t = f(x_t) dt +
\sigma(x_t) dB_t$ with $f$ being the vector function $f(x) = [ f_i(x)
]_{i\in[n]} = x(1 + (\Sigma-I)x) + \phi$ and $\sigma(x) = \sqrt{2T x}$.  Let
$x^1_t = [ x_{i,t}^1 ]_{i\in[n]}$ and $x^2_t = [ x_{i,t}^2 ]_{i\in[n]}$ be two
solutions starting at the same point $x_0$ and defined with the same BM $B_t = [
B_{i,t} ]_{i\in[n]}$. Observing that the function $\sigma$ satisfies the
inequality $|\sigma(x) - \sigma(y)| \leq \rho(|x-y|)$ with $\rho(x) = \sqrt{2T}
\sqrt{x}$ satisfying $\int_0^1 \rho^{-2}(x) dx = \infty$, we can use the
technique of the proof of \cite[Chapter~4, Theorem~3.2]{ike-wat-livre89} to construct
a sequence of $\RR\to\RR_+$ functions $(\varphi_k)_{k\geq 1}$ such that
$\varphi_k \in \cC^2(\RR;\RR)$, $\varphi_k(x) \uparrow |x|$ as $k\to\infty$,
$|\varphi_k'(x) | \leq 1$, and $0\leq \varphi_k''(x) \leq 2 \rho^{-2}(x) / k$. 

Let $\Delta_{i,t} = x^1_{i,t} - x^2_{i,t}$, consider the SDE 
\[
\begin{bmatrix} d x^1_t \\ d x^2_t \end{bmatrix} = 
\begin{bmatrix} f(x^1_t) \\ f(x^2_t) \end{bmatrix} dt +
\begin{bmatrix} \diag\sigma(x^1_t) \\ \diag\sigma(x^2_t) \end{bmatrix} 
 dB_t, 
\]
and, denoting as $\|\cdot\|_1$ the $\ell_1$ norm in $\RR^n$, define the 
stopping time 
\[
\eta_a = \inf \left\{ t \geq 0 \ : \ 
 \| x^1_t \|_1 \vee \| x^2_t \|_1 \geq a \right\} 
\]
for $a > 0$. Applying Itô's formula to the SDE above, we obtain 
\begin{align*}
\sum_i \varphi_k(\Delta_{i,t\wedge\eta_a}) &= \int_0^{t\wedge\eta_a} \sum_i 
  \varphi_k'(\Delta_{i,u}) \left( f_i(x^1_u) - f_i(x^2_u) \right) du 
  + \frac 12 \int_0^{t\wedge\eta_a} \sum_i 
 \varphi_k''(\Delta_{i,u}) 
\left( \sigma(x^1_{i,u}) - \sigma_i(x^2_u) \right)^2 du  \\
&\phantom{=} 
  + \int_0^{t\wedge\eta_a} \sum_i 
 \varphi_k'(\Delta_{i,u}) 
\left( \sigma(x^1_{i,u}) - \sigma_i(x^2_u) \right) dB_{i,u}  . 
\end{align*} 
Observing that the function 
$b : (\RR_+^n, \|\cdot\|_1) \to (\RR^n,\|\cdot\|_1)$ is 
Lipschitz on the ball $\{ x \in \RR_+^n \ : \ \| x \|_1 \leq a \}$ with 
the Lipschitz constant $C_a$, we obtain 
\begin{align*}
 \sum_i \EE \varphi_k(\Delta_{i,t\wedge\eta_a}) &= 
 \EE \int_0^{t\wedge\eta_a} \sum_i 
  \varphi_k'(\Delta_{i,u}) \left( f_i(x^1_u) - f_i(x^2_u) \right) du 
  + \frac 12 \EE \int_0^{t\wedge\eta_a} \sum_i 
 \varphi_k''(\Delta_{i,u}) 
\left( \sigma(x^1_{i,u}) - \sigma_i(x^2_{i,u}) \right)^2 du  \\
 &\leq C_a \EE \int_0^{t\wedge\eta_a} \sum_i \left| \Delta_{i,u} \right| du 
  + \frac nk \EE \int_0^{t\wedge\eta_a} du  \\
 &\leq C_a \EE \int_0^{t\wedge\eta_a} \sum_i \left| \Delta_{i,u} \right| du 
  + \frac{nt}{k}. 
\end{align*} 
By taking $k\to\infty$, we then obtain by monotone convergence that 
\[
\sum_i \EE \left| \Delta_{i,t\wedge\eta_a} \right| \leq 
  C_a \EE \int_0^{t\wedge\eta_a} \sum_i \left| \Delta_{i,u} \right| du 
 \leq C_a \int_0^{t} \sum_i \EE \left| \Delta_{i,u\wedge\eta_a} \right| du . 
\]
Using Grönwall's lemma, we obtain that 
$\sum_i \EE | \Delta_{i,t\wedge\eta_a} | = 0$, thus, 
$\sum_i \EE | \Delta_{i,t} | = 0$, which shows that $x^1$ and $x^2$ are
indistinguishable by continuity.

\section{Proof of Lemma~\ref{F<F^A}} 
\label{prf-F<F^A} 
Define the event 
\[
\cE = \left[ \lambda_+^{\max}(\Sigma) < 1 - \varepsilon_\Sigma  \right] , 
\]
and recall that $\1_{\cE^{\text{c}}} \to_n 0$ almost surely by 
Proposition~\ref{bunin-bd}. Given a number $a > 0$, define the sets $\mathfrak C_+(a) \subset \RR_+^n$ and 
$\mathfrak C(a) \subset \RR^n$ as 
\[
\mathfrak C_+(a) = \left\{ x = [x_i] \in \RR_+^n, \ \forall i \in[n], 
 x_i \geq a \right\} \quad \text{and} \quad 
\mathfrak C(a) = \left\{ x = [x_i] \in \RR^n, \ \forall i \in[n], 
 |x_i| \geq a \right\} .
\]
Let $B^n(a)$ be the closed ball of $\RR^n$ with radius $a$, and let 
$Z \sim \cN(0, I_n)$. 

Let $A > 0$. 
Whether we set $\bH(x) = \widetilde \mcH(x)$ or $\bH(x) = \mcH(x)$, obtain by
inspecting the expressions of the Hamiltonians $\widetilde\mcH$ and $\mcH$ that 
\begin{align*}
\int_{B_+(\sqrt{n} A)} e^{\beta \bH(x)} dx &\geq 
\int_{B_+(\sqrt{n} A) \cap \mathfrak C_+(1)} 
     e^{-\beta(\|\Sigma\| +1) \| x\|^2/2} dx 
= 2^{-n} \int_{B(\sqrt{n} A) \cap \mathfrak C(1)} 
    e^{-\beta(\|\Sigma\| +1) \| x\|^2/2} dx  \\
&= \left(\frac{\pi}{2\beta(\|\Sigma\| +1)}\right)^{n/2} 
 \PP_Z\left[ Z \in B\left(\sqrt{n \beta(\|\Sigma\|+1)} A\right) \cap 
  \mathfrak C\left(\sqrt{\beta(\|\Sigma\|+1)}\right) \right]   \\
&\geq \left(\frac{\pi}{2\beta(\|\Sigma\| +1)}\right)^{n/2} 
 \left( \PP_Z\left[ Z \in \mathfrak C\left(\sqrt{\beta(\|\Sigma\|+1)}
     \right)\right]  
 - \PP_Z\left[ Z \in B\left(\sqrt{n \beta(\|\Sigma\|+1)} A\right)^{\text{c}} 
  \right] \right) . 
\end{align*} 
Using, \emph{e.g.}, \cite[7.1.13]{abr-ste-64} to lower bound the Gaussian
tail function, we obtain that there exists a constant $C_\beta$ depending on
$\beta$ only such that
$\PP_Z\left[ Z \in \mathfrak C\left(\sqrt{\beta(\|\Sigma\|+1)}
     \right)\right] \geq C_\beta^n \exp(-n\beta(\|\Sigma\| + 1))$. By Gaussian
concentration, we also have that 
$\PP_Z\left[ Z \in B\left(\sqrt{n \beta(\|\Sigma\|+1)} A\right)^{\text{c}} 
  \right] \leq \exp(-n\beta(\|\Sigma\| +1)A^2/2)$ for $A$ large enough, which
is negligible with respect to $C_\beta^n \exp(-n\beta(\|\Sigma\| + 1))$ for
large $A$. Putting things together, we obtain that 
\begin{equation}
\label{eH>} 
\int_{B_+(\sqrt{n} A)} e^{\beta \bH(x)} dx \geq 
C^n (\|\Sigma\| + 1)^{-n/2} \exp(-n\beta(\|\Sigma\| + 1)) 
\end{equation} 
for $A$ large enough. 

Now, setting $B_+(\sqrt{n} A)^{\text{c}} = \RR_+^n \setminus B_+(\sqrt{n} A)$, 
we write 
\begin{align*} 
\widetilde F &= 
\frac 1n \EE \1_\cE \log \left( \int_{B_+(\sqrt{n} A)} e^{\beta \widetilde\mcH} 
  + \int_{B_+(\sqrt{n} A)^{\text{c}}} e^{\beta \widetilde\mcH} \right)  
 + \frac 1n \EE \1_{\cE^{\text{c}}} 
  \log \int_{\RR_+^n} e^{\beta \widetilde\mcH} \\
&\eqdef  
\frac 1n \EE \1_\cE \log \left( \cI_{B_+(\sqrt{n} A)} + 
 \cI_{B_+(\sqrt{n} A)^{\text{c}}} \right)  
 + \frac 1n \EE \1_{\cE^{\text{c}}} 
  \log \int_{\RR_+^n} e^{\beta \widetilde\mcH} . 
\end{align*} 
To manage the term with the indicator $\1_{\cE^{\text{c}}}$, we observe that 
$\widetilde\Sigma = 0$ on the event $\cE^{\text{c}}$.  With this, the integral
in this third term is deterministic and can be easily shown to satisfy
$n^{-1}\log\int_{\RR_+^n} \exp(\beta\widetilde\mcH) \leq C$.  Therefore, this
term is negligible because $\PP[\cE^{\text{c}}] \to 0$. 
 
We now manage the terms $\cI_{B_+(\sqrt{n} A)}$ and $\cI_{B_+(\sqrt{n}
A)^{\text{c}}}$, essentially showing that the latter is negligible with respect
to the former.  On the event $\cE$, it holds that $x^\T (\Sigma - I )x \leq -
\varepsilon_\Sigma \| x \|^2$ on $\RR_+^n$, thus, there exists a constant $c =
c(\phi,\beta)$ such that on this event, 
\begin{align*} 
\cI_{B_+(\sqrt{n} A)^{\text{c}}} 
 &\leq 
\int_{B_+(\sqrt{n} A)^{\text{c}}} \, 
   e^{-\frac\beta 2 (\varepsilon_\Sigma \| x \|^2 - 2 c\ps{1}{x})} dx 
= e^{\frac{\beta c^2}{2\varepsilon_\Sigma} n }  
\int_{B_+(\sqrt{n} A)^{\text{c}}} \, 
   e^{-\frac\beta 2 \left\| \sqrt{\varepsilon_\Sigma} x 
   - \frac{c}{\sqrt{\varepsilon_\Sigma}} 1 \right\|^2} dx . 
\end{align*} 
By making the variable change 
$u = \sqrt{\varepsilon_\Sigma} x- \frac{c}{\sqrt{\varepsilon_\Sigma}} 1$ and
noticing that $\| x \| \geq \sqrt{n} A \Rightarrow 
 \| u \| \geq \sqrt{n \varepsilon_\Sigma} A - c\sqrt{n / \varepsilon_\Sigma}$,
we obtain that 
\[
\cI_{B_+(\sqrt{n} A)^{\text{c}}} \leq 
 \varepsilon_\Sigma^{-n/2} e^{\frac{\beta c^2}{2\varepsilon_\Sigma} n }  
\int_{B(\sqrt{n} 
  (\sqrt{\varepsilon_\Sigma} A - c / \sqrt{\varepsilon_\Sigma}))^{\text{c}}} \, 
   e^{-\beta \| u \|^2 / 2} du 
\]
on $\cE$ for $A$ large enough, where $B(a)^{\text{c}} = \RR^n \setminus B(a)$. 
By Gaussian concentration, we finally get that there 
\[
\cI_{B_+(\sqrt{n} A)^{\text{c}}} \leq \exp(- nC (A^2 - 1)) 
\]
on $\cE$ for $A$ large enough. 

We now write 
\[
\frac 1n \EE \1_\cE \log \left( 
\cI_{B_+(\sqrt{n} A)} + \cI_{B_+(\sqrt{n} A)^{\text{c}}} \right)  
= 
\frac 1n \EE \1_\cE \log \cI_{B_+(\sqrt{n} A)} + 
\frac 1n \EE \1_\cE \log \left( 1 + \cI_{B_+(\sqrt{n} A)}^{-1}  
 \cI_{B_+(\sqrt{n} A)^{\text{c}}} \right) . 
\]
Using Inequality~\eqref{eH>}, we have 
\[
\frac 1n \EE \1_\cE \log \left( 1 + \cI_{B_+(\sqrt{n} A)}^{-1}  
 \cI_{B_+(\sqrt{n} A)^{\text{c}}} \right) 
\leq \frac 1n \EE \log \left( 1 + e^{nC(\| \Sigma \| + 1 - A^2)} \right), 
\]
and since $\log(1+e^x) \leq e^x \1_{x< 0} + (x+1) \1_{x\geq 0}$, we can write
\[
\frac 1n \EE \log \left( 1 + e^{nC(\| \Sigma \| + 1 - A^2)} \right)
\leq \frac 1n + C\EE (\| \Sigma \| + 1 - A^2) \1_{\| \Sigma \| \geq A^2 - 1} . 
\]
Using, \emph{e.g.}, \cite[Corollary 4.4.8]{ver-livre18}, we know that 
$\EE \| \Sigma \|^2 \leq 2 \kappa^2 \EE \| W \|^2 + 2 \alpha^2 < C$. Thus, for
$A$ large enough, the right hand side converges to zero by the 
Cauchy-Schwarz inequality and the standard results on the behavior of 
$\| \Sigma \| = \|\kappa W + \alpha 1 1^\T / n \|$ for large $n$. 

Getting back to the expression of $\widetilde F_n$ provided above, we then
obtain that 
\[
\widetilde F_n = \frac 1n \EE \1_\cE \log \cI_{B_+(\sqrt{n} A)} + o_n(1).
\]

Turning to $F_n^A$, we write 
\[
F_n^A = \frac 1n \EE \1_{\cE} \log \int_{B_+(\sqrt{n} A)} e^{\beta \mcH} 
 + \frac 1n \EE \1_{\cE^{\text{c}}} \log \int_{B_+(\sqrt{n} A)} e^{\beta \mcH} 
= \frac 1n \EE \1_\cE \log \cI_{B_+(\sqrt{n} A)} 
 + \frac 1n \EE \1_{\cE^{\text{c}}} \log \int_{B_+(\sqrt{n} A)} e^{\beta \mcH} 
\]
since $\Sigma = \widetilde\Sigma$ on the event $\cE$. 
By \eqref{eH>}, using that $\log(1+|a|) \leq |a|$, we obtain that 
the second term satisfies 
\[
\frac 1n \EE \1_{\cE^{\text{c}}} \log \int_{B_+(\sqrt{n} A)} e^{\beta \mcH} 
\geq - C \EE \1_{\cE^{\text{c}}} (1 + \| \Sigma \|). 
\]
By the Cauchy-Schwarz inequality, the right hand side is negligible. 

We therefore have that $F_n^A - \widetilde F_n \geq o_n(1)$ for $A$ large
enough, and Lemma~\ref{F<F^A} is proven. 

\section{Proof of Lemma~\ref{F>Fa}}
\label{prf-F>Fa}  

Still writing $\cE = \left[ \lambda_+^{\max}(\Sigma) < 1 - 
  \varepsilon_\Sigma \right]$, we have 
\[
F_{a,n} = \frac 1n \EE \1_{\cE} \log \int_{[0,a]^n} e^{\beta \mcH(x)} dx 
 + \frac 1n \EE \1_{\cE^{\text{c}}} \log \int_{[0,a]^n} e^{\beta\mcH(x)} dx  
 \eqdef \chi_{1,n} + \chi_{2,n}.  
\] 
We bound the term $\chi_{2,n}$ by writing 
\begin{align*}
\chi_{2,n} &\leq 
 \frac 1n \EE \1_{\cE^{\text{c}}} \log 
  \int_{[0,a]^n} \exp\left(\beta \| \Sigma - I \| \| x \|^2 /2 
  + \beta\ps{1}{x} + (\beta\phi - 1) \ps{1}{\log x} \right) \ dx \\
 &= \EE \1_{\cE^{\text{c}}} \log \int_0^a 
 \exp \left( \beta \| \Sigma - I \| x^2 /2 + \beta x + (\beta\phi - 1) \log x 
  \right) \ dx 
 \\
 &\leq \EE  \1_{\cE^{\text{c}}} \log \left( a 
  \exp \left( \beta \| \Sigma - I \| a^2/2 + \beta a + (\beta\phi - 1) \log a 
  \right) \right) \\
&\leq \EE \1_{\cE^{\text{c}}} \left( \beta\phi\log a 
  + \beta a^2 \| \Sigma - I \| / 2 + \beta a \right) 
\end{align*} 
which goes to zero as $n\to\infty$ by using 
\cite[Corollary 4.4.8]{ver-livre18} and Cauchy-Schwarz. We also have 
\[
\widetilde F_n \geq  
\frac 1n \EE \1_\cE \log \int_{[0,a]^n} e^{\beta \mcH} 
+ \frac 1n 
  \EE \1_{\cE^{\text{c}}} \log \int_{\RR_+^n} e^{\beta \widetilde\mcH}. 
\] 
As in Appendix~\ref{prf-F<F^A}, the second term at the right hand side is 
negligible. This proves Lemma~\ref{F>Fa}.

\section{Lemma~\ref{Xcont}: Sketch of proof}
\label{prf-Xcont} 
Let $\zeta \in \fop_D$ be a measure of the form
\[
\zeta = \sum_{k=0}^K (\lambda_k - \lambda_{k-1}) \, \delta_{b_k},
\]
where
\[
0 < \lambda_0 < \cdots < \lambda_{K-1} < 1, 
\quad 0 = b_0 < \cdots < b_{K-1} < b_K = D.
\]
Consider the random Gibbs probability measure $\cG$ on 
$\left(B_+(A\sqrt{n})\cap[0,a]^n\right)\times\NN^K$, 
whose density with respect to the product of the Lebesgue measure on $\RR^n$ and the counting measure on $\NN^K$ is proportional to
\[
v_{\bi}\,
\exp\!\left(
  \ps{\bx}{\bq_{\bi}} 
  + \beta \alpha h \ps{1}{\bx} 
  + \gamma \|\bx\|^2
\right)
\mu_{\beta}^{\otimes n}(d\bx),
\]
where $(v_{\bi})_{\bi\in\NN^K}\sim \RPC_{\bs\lambda}$ and $\bq_{\bi} = (q_{i1}, \dots, q_{in})^\top$ satisfies
\[
q_{\bi\ell} \stackrel{d}{=} \sum_{k=1}^K 
z_{i_1, \dots, i_k}^{(\ell)} 
\sqrt{\xi'(b_k) - \xi'(b_{k-1})},
\quad 
z_{i_1, \dots, i_k}^{(\ell)} \sim \cN(0,1)
\text{ independent.}
\]
Let $\cZ$ denote the associated partition function and recall that
$\bX_{0,a}^A = \EE \log \cZ$.  
For any measurable function $f(\bx, \bi)$, we write
\[
\EGibbs{f(\bx, \bi)}
= 
\EE \left[
\frac{1}{\cZ} 
\sum_{\bi \in \NN^K} v_{\bi} 
\int_{B_+(A\sqrt{n})\cap[0,a]^n}
f(\bx, \bi)\, 
e^{\ps{\bx}{\bq_{\bi}} 
+ \beta \alpha h \ps{1}{\bx} 
+ \gamma \|\bx\|^2}
\, \mu_\beta^{\otimes n}(d\bx)
\right].
\]

\paragraph{Step 1. Derivative with respect to $b_k$.}
Following \cite[Lemma~6.2]{dom-mou-livre24}, for any $k \in [K-1]$ we compute
\[
\partial_{b_k} X_{0,a}^A
=
\EE \EGibbs{\partial_{b_k} \ps{\bx}{\bq_{\bi}}}.
\]
Since only the $k$-th and $(k+1)$-th terms depend on $b_k$, we obtain
\[
\partial_{b_k} X_{0,a}^A 
= 
\frac{\xi''(b_k)}{2\sqrt{\xi'(b_k) - \xi'(b_{k-1})}} 
\, \EE \EGibbs{\ps{\bx}{\bz_{i_1, \dots, i_k}}}
-
\frac{\xi''(b_k)}{2\sqrt{\xi'(b_{k+1}) - \xi'(b_k)}} 
\, \EE \EGibbs{\ps{\bx}{\bz_{i_1, \dots, i_{k+1}}}},
\]
where $\bz_{i_1,\dots,i_k} := (z^{(\ell)}_{i_1,\dots,i_k})_{\ell\in[n]}$.

Now in order to perform Gaussian integration by parts we first define the Gaussian processes
\[
\cX(\bx, \bi) := \ps{\bx}{\bz_{i_1, \dots, i_k}},
\qquad
\mathcal{Y}(\bx, \bi) := \ps{\bx}{\bq_{\bi}} + \beta \alpha h \ps{1}{\bx} + \gamma \|\bx\|^2.
\]
For two replicas $(\bx^1, \bi^1)$ and $(\bx^2, \bi^2)$, the covariance between $\cX$ and $\mathcal{Y}$ reads
\begin{align*}
C((\bx^1, \bi^1), (\bx^2, \bi^2))
&:= \EE\!\left[\cX(\bx^1, \bi^1) \mathcal{Y}(\bx^2, \bi^2)\right] \\
&= \EE\!\left[\ps{\bx^1}{\bz_{i^1_1,\dots,i^1_k}}
                \ps{\bx^2}{\bq_{\bi^2}}\right]\\
&= \sqrt{\xi'(b_k) - \xi'(b_{k-1})}
   \sum_{\ell=1}^n x^1_\ell x^2_\ell
   \EE\!\left[z^{(\ell)}_{i^1_1,\dots,i^1_k}
              z^{(\ell)}_{i^2_1,\dots,i^2_k}\right].
\end{align*}
The last expectation equals $1$ if $\bi^1 \wedge \bi^2 \ge k$, and $0$ otherwise, hence
\[
C((\bx^1, \bi^1), (\bx^2, \bi^2))
=
\sqrt{\xi'(b_k) - \xi'(b_{k-1})}
\, \ps{\bx^1}{\bx^2} \1_{\{\bi^1 \wedge \bi^2 \ge k\}}.
\]

By Gaussian integration by parts (see \cite[Lemma~1.1]{pan-livre13}),
\[
\EE \EGibbs{\ps{\bx}{\bz_{i_1,\dots,i_k}}}
=
\sqrt{\xi'(b_k) - \xi'(b_{k-1})}
\, \EE \EGibbs{\|\bx^1\|^2 - \ps{\bx^1}{\bx^2}\1_{\{\bi^1\wedge \bi^2 \ge k\}}}.
\]
Similarly,
\[
\EE \EGibbs{\ps{\bx}{\bz_{i_1,\dots,i_{k+1}}}}
=
\sqrt{\xi'(b_{k+1}) - \xi'(b_k)}
\, \EE \EGibbs{\|\bx^1\|^2 - \ps{\bx^1}{\bx^2}\1_{\{\bi^1\wedge \bi^2 \ge k+1\}}}.
\]
Substituting these into the expression for $\partial_{b_k} X_{0,a}^A$ gives
\[
\partial_{b_k} X_{0,a}^A
=
\frac{\xi''(b_k)}{2}
\EE \EGibbs{
\ps{\bx^1}{\bx^2}
\left(\1_{\{\bi^1\wedge \bi^2 \ge k+1\}}
- \1_{\{\bi^1\wedge \bi^2 \ge k\}}\right)
}
=
\frac{\xi''(b_k)}{2}
\EE \EGibbs{\ps{\bx^1}{\bx^2} \1_{\{\bi^1\wedge \bi^2 = k\}}}.
\]

\paragraph*{Step 2.} As a next step and without loss of generality, 
we may assume that $\zeta$ and $\tilde\zeta$ admit the same coefficients $(\lambda_k)$, i.e. it can be shown that without modifying neither $\bX_{0,a}^A(\zeta,h,\gamma)$ nor $\bX_{0,a}^A(\tilde\zeta,h,\gamma)$, nor $\bd(\zeta,\tilde\zeta)$, one can assume that $\zeta$ and $\tilde\zeta$ take the following forms: there exists an integer $K > 0$ and real numbers \begin{align} \label{zet-tzet} \begin{split} & 0 < \lambda_0 < \cdots < \lambda_{K-1} < 1, \\ & 0 = b_0 \leq \cdots \leq b_{K-1} \leq b_K = D, \quad \text{and} \\ & 0 = \tilde b_0 \leq \cdots \leq \tilde b_{K-1} \leq \tilde b_K = D, \end{split} \end{align}
such that $\zeta([0,t]) = \lambda_k$ if $t\in[b_k, b_{k+1})$ and such that $\tilde\zeta([0,t]) = \lambda_k$ if $t\in[\tilde b_k, \tilde b_{k+1})$. See \cite[Proposition 6.3]{dom-mou-livre24} for a justification.
For $s \in [0,1]$, define $b_k(s) = s b_k + (1-s)\tilde b_k$ and set
\[
\zeta_s := \sum_{k=0}^K (\lambda_k - \lambda_{k-1}) \, \delta_{b_k(s)},
\]
so that $\zeta_1 = \zeta$ and $\zeta_0 = \tilde\zeta$.
Then
\[
\bX_{0,a}^A(\zeta,h,\gamma) - \bX_{0,a}^A(\tilde\zeta,h,\gamma)
=
\int_0^1 \partial_s \bX_{0,a}^A(\zeta_s,h,\gamma)\, ds
=
\sum_{k=1}^{K-1} (b_k - \tilde b_k)
\int_0^1 \partial_{b_k(s)} \bX_{0,a}^A(\zeta_s,h,\gamma)\, ds.
\]
\\ \\
Using $|\ps{\bx^1}{\bx^2}| \le nA^2$ and $\EE\EGibbs{\1_{\{\bi^1\wedge \bi^2 = k\}}} = \lambda_k - \lambda_{k-1}$ from the general properties of RPC, we deduce
\[
\left| \partial_{b_k} X_{0,a}^A \right| 
\le 
nA^2 \frac{|\xi''(b_k)|}{2},
\quad \text{and hence}
\]
\[
\left| 
\bX_{0,a}^A(\zeta,h,\gamma)
- \bX_{0,a}^A(\tilde\zeta,h,\gamma)
\right|
\le
nA^2 
\max_{t\in[0,D]} \frac{|\xi''(t)|}{2}
\sum_{k=1}^{K-1} (\lambda_k - \lambda_{k-1}) |b_k - \tilde b_k|.
\]
Finally, observing that
\[
\sum_{k=1}^K (\lambda_k - \lambda_{k-1}) |b_k - \tilde b_k|
=
\int_0^D |\zeta(t) - \tilde\zeta(t)| \, dt,
\]
we obtain the desired result.

\bibliographystyle{plain}
\bibliography{math}

\end{document}